\documentclass[11pt,twoside]{amsart}
\usepackage[latin9]{inputenc}
\usepackage[a4paper]{geometry}
\geometry{verbose,lmargin=3cm,rmargin=3cm}
\setcounter{tocdepth}{5}
\setlength{\parskip}{\smallskipamount}
\setlength{\parindent}{0pt}
\usepackage{amstext}
\usepackage{amsthm}
\usepackage{amssymb}
\usepackage{stmaryrd}
\usepackage[unicode=true,pdfusetitle,
 bookmarks=true,bookmarksnumbered=false,bookmarksopen=false,
 breaklinks=false,pdfborder={0 0 1},backref=false,colorlinks=false]
 {hyperref}

\makeatletter
\numberwithin{equation}{section}
\numberwithin{figure}{section}
\theoremstyle{plain}
\newtheorem*{thm*}{\protect\theoremname}
\theoremstyle{plain}
\newtheorem{thm}{\protect\theoremname}
\theoremstyle{plain}
\newtheorem{lem}[thm]{\protect\lemmaname}
\theoremstyle{remark}
\newtheorem{rem}[thm]{\protect\remarkname}
\theoremstyle{definition}
\newtheorem{defn}[thm]{\protect\definitionname}
\theoremstyle{plain}
\newtheorem{cor}[thm]{\protect\corollaryname}
\theoremstyle{definition}
\newtheorem{example}[thm]{\protect\examplename}

\usepackage{amssymb,amsthm,amsmath,amsfonts,amscd}
\usepackage{graphicx}
\usepackage{url}
\usepackage{color}
\usepackage[active]{srcltx}
\usepackage[matrix,arrow]{xy}
\usepackage{mathrsfs}
\usepackage{enumerate}
\usepackage{amsopn} 
\usepackage{bbm} 
\usepackage[normalem]{ulem} 
\usepackage{cite}
\usepackage{hyperref}
\allowdisplaybreaks[1]
\usepackage[english]{babel}
\usepackage{bbm}
\usepackage{mhchem}
\usepackage{mdef}
\date{\today}

\makeatother

\providecommand{\corollaryname}{Corollary}
\providecommand{\definitionname}{Definition}
\providecommand{\examplename}{Example}
\providecommand{\lemmaname}{Lemma}
\providecommand{\remarkname}{Remark}
\providecommand{\theoremname}{Theorem}

\begin{document}
\title[Optimal regularity for the PME]{Optimal regularity for the porous medium equation}
\begin{abstract}
We prove optimal regularity for solutions to porous media equations in Sobolev spaces, based on velocity averaging techniques. In particular, the obtained regularity is consistent with the optimal regularity in the linear limit. 
\end{abstract}

\thanks{Acknowledgements: The author would like to thank Jonas Sauer for carefully proof-reading an early version of the paper.}
\author{Benjamin~Gess}
\address{Benjamin~Gess\newline
Max Planck Institute for Mathematics in the Sciences\newline Inselstr.~22, 04103 Leipzig, Germany\newline
and Fakult\"at f\"ur Mathematik, Universit\"at Bielefeld\newline Universit\"atstr. 25, 33615 Bielefeld, Germany }
\email{bgess@mis.mpg.de}

\maketitle

\section{Introduction}

We establish the optimal spatial regularity of solutions of the porous medium equation 
\begin{align}
\partial_{t}u & =\D(|u|^{m-1}u)\text{ on }(0,T)\times\R_{x}^{d}\label{eq:PME-intro}\\
u(0) & =u_{0}\text{ on }\R_{x}^{d},\nonumber 
\end{align}
with $u_{0}\in L^{1}(\R_{x}^{d})$, $T\ge0$, $m>1$. 

All known regularity estimates in terms of Hölder or Sobolev spaces are restricted to a degree of differentiability of an order less than one. The best known regularity estimate in Sobolev spaces, obtained by Tadmor and Tao in \cite{TT07} and Ebmeyer in \cite{E05}, is that, if $u_{0}\in(L^{1}\cap L^{\infty})(\R_{x}^{d})$, then
\begin{equation}
u\in L^{m+1}([0,T];W_{loc}^{\frac{2}{m+1}-,m+1}(\R_{x}^{d})).\label{eq:eb-reg-2}
\end{equation}
Since $\frac{2}{m+1}<1$ this estimate is inconsistent with the optimal order of differentiability in the linear case of the heat equation ($m=1$) which is $u\in L^{1}([0,T];W^{2,1}(\R_{x}^{d})).$

A scaling argument (cf.~Appendix \ref{app:optimality_scaling} below) shows that it may be possible to improve the regularity to $u\in L^{m}([0,T];\dot{W}^{\frac{2}{m},m}(\R_{x}^{d}))$ which is consistent with the linear case $m=1$. The Barenblatt solution shows that this is the optimal regularity. This is the main result of this paper.
\begin{thm*}
Let $u_{0}\in(L^{1}\cap L^{1+\ve})(\R_{x}^{d})$ for some $\ve>0$. Then, for all $p\in[1,m)$, $s<\frac{2}{m}$,
\begin{equation}
u\in L^{p}([0,T];\dot{W}_{loc}^{s,p}(\R_{x}^{d})).\label{eq:main_est}
\end{equation}
Moreover, there is a constant $C\ge0$ such that 
\[
\|u\|_{L_{t}^{p}\dot{W}_{x,loc}^{s,p}}\le C\left(\|u_{0}\|_{L_{x}^{1}\cap L_{x}^{1+\ve}}^{2}+1\right).
\]
\end{thm*}
The precise statement is given in Theorem \ref{thm:pme} below.

In addition, we treat more general classes of equations, in particular including anisotropic porous media equations of the form
\begin{align}
\partial_{t}u & =\sum_{j=1}^{d}\partial_{x_{j}x_{j}}u^{[m_{j}]}+S(t,x)\text{ on }(0,T)\times\R_{x}^{d},\label{eq:PME-intro-1}
\end{align}
with $u_{0}\in L^{1}(\R_{x}^{d})$, $S\in L^{1}([0,T]\times\R_{x}^{d})$ and $u^{[m]}:=|u|^{m-1}u$. Setting $1<\underline{m}:=\min\{m_{j}\}$, $\overline{m}:=\max\{m_{j}\}$ we obtain that, for all $s<\frac{2}{\overline{m}}\left(\frac{\underline{m}-1}{\overline{m}-1}\right)$, $p<\frac{2\overline{m}}{\overline{m}+1}$, 
\[
\int_{v}f(t,x,v)\phi(v)\,dv\in L^{p}([0,T];W_{loc}^{s,p}(\R_{x}^{d}))
\]
where $f(t,x,v):=1_{v<u(t,x)}-1_{v<0}$ and $\phi$ is an arbitrary cut-off function (see Theorem \ref{ex:anisoptropic_PME} below for details).

In a third main result, we consider the degenerate parabolic Anderson model
\begin{align}
\partial_{t}u & =\partial_{xx}u^{[m]}+u\,S\text{ on }(0,T)\times I\label{eq:intro-anderson}\\
u & =0\text{ on }(0,T)\times\partial I\nonumber \\
u(0) & =u_{0}\in L^{m+1}(I),\nonumber 
\end{align}
on an open, bounded interval $I\subseteq\R$, with $m\in(1,2)$ and $S$ being spatial white noise. The additional difficulty in this case is the irregularity of the source $S$, since spatial white noise is a distribution only. We again obtain regularity consistent with the optimal regularity in the linear case ($m=1$).
\begin{thm*}
Let $u_{0}\in L^{m+1}(I).$ Then there exists a weak solution $u$ to \eqref{eq:intro-anderson} satisfying, for all $p\in[1,m)$, $s<\frac{3}{2}\frac{1}{m}$, 
\begin{equation}
u\in L{}^{p}([0,T];W_{loc}^{s,p}(I)).\label{eq:main_est-1}
\end{equation}
Moreover, there is a constant $C\ge0$ such that 
\begin{align*}
\|u\|_{L_{t}^{p}W_{x,loc}^{s,p}} & \le C(\|u_{0}\|_{L_{x}^{m+1}}^{m+1}+\|S\|_{B_{\infty,\infty}^{-\eta}}^{\tau}+1),
\end{align*}
for some $\tau\ge2$ and $\eta\in(\frac{1}{2},1]$ small enough.
\end{thm*}
The precise statement is given in Corollary \ref{cor:ex-appr-anderson} below. 

The proof presented in this paper is based on Fourier analytic techniques and averaging Lemmata. The first step is to pass to a kinetic formulation of \eqref{eq:PME-intro}. Introducing the kinetic function $f(t,x,v):=1_{v<u(t,x)}-1_{v<0}$ leads to the kinetic form of \eqref{eq:PME-intro}
\begin{align}
\partial_{t}f & =m|v|^{m-1}\Delta f+\partial_{v}q,\label{eq:kinetic_ph-2-1-1}
\end{align}
for some non-negative measure $q$. Since this constitutes a linear equation in $f$, the regularity of velocity averages $\int f\phi(v)\,dv$ for smooth cut-off functions $\phi$ can be analyzed by means of suitable micro-local decompositions in Fourier space. Up to this point our setup is in line with \cite{TT07}. However, in the available literature, one of the drawbacks of analyzing regularity by means of averaging techniques is that it was unknown how to make use of the sign of the measure $q$. Indeed, these arguments were only able to use the fact that the total variation norm of $q$ is finite (cf.~e.g.~\cite{DLOW03,DLW03}). In contrast, in this work, we make use of the additional fact that the entropy dissipation measure $q$ has finite singular moments, meaning that $|v|^{-\g}q$ has finite mass for all $\g\in[0,1)$. In this way we are able to (indirectly) exploit the sign property of $q$ for the first time. 

In addition, classical averaging techniques are restricted to working in $L^{p}$ spaces with $p\in[1,2]$ (cf.~\cite[Averaging Lemma 2.1]{TT07}), which leads to non-optimal integrability exponents. Indeed, because of this in \cite[(4.10)]{TT07} only $W^{\frac{2}{m+1}-,1}$ regularity for solutions to \eqref{eq:PME-intro} could be shown. In order to obtain the optimal integrability exponent $p<m$ we introduce a new concept of isotropic truncation properties for Fourier multipliers. 

A further obstacle in classical averaging arguments is that they rely on a bootstrap technique. However, even if $u$ is smooth, the kinetic function $f$ will only have up to one spatial derivative. Therefore, the standard bootstrap argument is not suited to prove regularity of a higher (than one) order. In the anisotropic case, this difficulty is avoided in the current paper by directly exploiting the $v$-regularity of $f$. In the isotropic case these issues are overcome by introducing the isotropic truncation property mentioned above. In both cases this allows to fully avoid bootstrapping arguments. In order to underline the differences and improvements with regard to \cite{TT07} we follow the notation and structure of \cite{TT07} as far as possible. While, as usual in the theory of averaging techniques, our proof also relies on a micro-local decomposition in Fourier space, the order of decomposition and real-interpolation, the key Lemma \ref{lem:bsc_est}, the bootstrapping argument and the estimation of the entropy dissipation measure proceed differently, as outlined above.

\subsection{Short overview of the literature}

The study of regularity of solutions to porous media equations has a long history and we make no attempt to reproduce a complete account here. In the absence of external forces, the continuity of weak solutions to the porous medium equation has been first shown in general dimension by Caffarelli-Friedman in \cite{CF79}. This result has been subsequently generalized to the case of forced porous media equations by Sacks in \cite{S83,S83b}, based on arguments developed by Cafarelli-Evans in \cite{CE83}. Further generalizations to more general classes of equations have been shown by DiBenedetto \cite{DB83} and Ziemer \cite{Z82}. A detailed account of these developments may be found in Vazquez \cite{V07}. Hölder continuity of solutions to the porous medium equation without force was first obtained by Caffarelli-Friedman \cite{CF80}, see also \cite{V07,W86}, where it is shown that bounded solutions to the porous medium equations are spatially $\a$-Hölder continuous with $\a=\frac{1}{m}\in(0,1)$. We note that in the linear limit $m\downarrow1$ this does not recover the optimal Hölder regularity of the linear case. A generalization to a more general class of degenerate PDE has been obtained by DiBenedetto-Friedman in \cite{DBF85}. In the recent work \cite{M13}, the assumptions on the forcing have recently been relaxed and quantitative estimates are obtained. In particular, it is shown that the Hölder exponent $\a$ is bounded away uniformly from $0$ for $m\downarrow1$. In the nice recent works \cite{BDG13,BDG14} continuity estimates for the porous medium equation and inhomogeneous generalizations thereof with measure valued forcing have been derived.

A particular feature of the porous medium equation ($m>1$) is the effect of finite speed of propagation and thus the occurrence of open interfaces. The regularity of the open interfaces has attracted a lot of attention in the literature, cf.~e.g.~Caffarelli-Friedman \cite{CF80}, Caffarelli-Vazquez-Wolansky \cite{CVW87}, Koch \cite{K99} and the references therein.

In non-forced porous media equations also higher order regularity estimates have been obtained. In one spatial dimension Aronson-Vázquez \cite{AV87} proved eventual $C^{\infty}$ regularity of solutions. For recent progress in the general dimension case see Kienzler-Koch-Vazquez \cite{KKV16}.

In terms of fractional Sobolev regularity of solutions to the porous medium equation less is known. As mentioned above, Ebmeyer \cite{E05} and Tadmor-Tao \cite{TT07} proved for non forced porous media equations that
\begin{equation}
u\in L^{m+1}([0,T];W_{loc}^{s,m+1}),\quad\forall s<\frac{2}{m+1}.\label{eq:eb-reg-1}
\end{equation}
See also Appendix \ref{app:ebm} for a slight improvement of these results. In the recent work \cite{GS16}, Gianazza-Schwarzacher proved higher integrability for nonnegative, local weak solutions to forced porous media equations in terms of a bound on 
\[
\|u^{\frac{m+1}{2}}\|_{L_{loc}^{2+\ve}((0,T);W_{loc}^{1,2+\ve})}
\]
for all $\ve>0$ small enough. In the case of non-forced porous medium equations, Aronson-Benilan type estimates can be used to derive further regularity properties. For example, in \cite[Theorem 8.7]{V07} it has been shown that $\Delta u^{m}\in L_{loc}^{1}((0,\infty);L^{1})$.

Extensions of \cite{TT07} to stochastic parabolic-hyperbolic equations have been considered in \cite{GH16}.

\subsection{Structure of the paper}

In Section \ref{sec:Anisotropic-case} we will consider the case of anisotropic, parabolic-hyperbolic second order PDE. The proof of certain multiplier estimates will be postponed to the Appendix \ref{sec:Truncation-property-and}. In Section \ref{sec:Isotropic-case} we then treat the isotropic case in more detail, in particular introducing the concept of the isotropic truncation property for Fourier multipliers. We will then deduce our main regularity estimates for forced porous media equations. In Section \ref{sec:Degenerate-parabolic-Anderson} we treat the case of the one-dimensional degenerate parabolic Anderson model. A slight improvements of the results obtained by Ebmeyer \cite{E05} will be presented in Appendix \ref{app:ebm}.

\subsection{Notation\label{subsec:Notation}}

For $p\in[1,\infty)$ we let $L^{p}$ be the usual Lebesgue spaces. The space of all locally finite Radon measures is $\mcM$, the subspace of all measures with finite total variation $\mcM_{TV}$. We let $\mcM^{+}\subseteq\mcM$ be the set of all non-negative, locally finite Radon measures and $\mcM_{TV}^{+}=\mcM_{TV}\cap\mcM^{+}$. When convenient we will use the shorthand notation $L_{x}^{1}=L^{1}(\R_{x}^{d})$, $L_{t,x}^{1}=L^{1}([0,T]\times\R_{x}^{d})$. For $p\ge1$ let $p'$ be its conjugate, that is, $\frac{1}{p}+\frac{1}{p'}=1$. We further let $H^{s,p}$ be the fractional Sobolev spaces defined via their Fourier transform, that is, as in \cite[Definition 6.2.2]{G14} and $W^{s,p}$ be the fractional Sobolev-Slobodeckij spaces (cf.~\cite[Section 7.35]{A75}). For ${\displaystyle 1\leqslant p<\infty,\,s\in(0,\infty)\setminus\N}$ and ${\displaystyle f\in W_{loc}^{\lfloor s\rfloor,1}(\R^{d})}$ let $\theta=s-\lfloor s\rfloor\in(0,1)$, define the (homogeneous) Slobodeckij seminorm by
\[
\|f\|{}_{\dot{W}^{s,p}}:=\sup_{|\alpha|=\lfloor s\rfloor}\left(\int_{\R^{d}}\int_{\R^{d}}\frac{|D^{\a}f(x)-D^{\a}f(y)|^{p}}{|x-y|^{\theta p+d}}\,dxdy\right)^{\frac{1}{p}}
\]
and set $\dot{W}^{s,p}:=\{{\displaystyle f\in W_{loc}^{\lfloor s\rfloor,1}(\R^{d})}:\,\|f\|{}_{\dot{W}^{s,p}}<\infty\}$. For ${\displaystyle f\in L_{loc}^{1}(\R^{d})}$ the total variation is given by 
\[
\|f\|_{\dot{BV}}:=\sup\left\{ \int_{\R^{d}}f(x)\div\phi(x)\,\mathrm{d}x\colon\phi\in C_{c}^{1}(\R^{d},\R^{d}),\ \Vert\phi\Vert_{L^{\infty}(\R^{d})}\leq1\right\} 
\]
and we set $\dot{BV}:=\{{\displaystyle f\in L_{loc}^{1}(\R^{d})}:\,\|f\|_{\dot{BV}}<\infty\}$. We follow the notation of \cite{G14,G14-2} and \cite{BCD11}: Let $\mcN^{s,p}(\R^{d})$ be the Nikolskii spaces (cf.~\cite{PKJF13}) and $B_{p,q}^{s}$ Besov spaces (cf.~\cite{G14-2}). We further let $\tilde{L}_{t}^{p}B_{p,q}^{s}=\tilde{L}^{p}([0,T];B_{p,q}^{s}(\R^{d}))$ denote time-space nonhomogeneous Besov spaces as in \cite[Definition 2.67]{BCD11}. We define the discrete increment operator by $\D_{e}^{h}u:=u(x+he)-u(x)$. For results and standard notations in interpolation theory we refer to \cite{BL76}. We let $\S_{+}^{d\times d}$ denote the space of symmetric, non-negative definite matrices. For $b=(b)_{i,j=1\dots d}\in\S_{+}^{d\times d}$ we set $\s=b^{\frac{1}{2}}$, that is, $b_{i,j}=\sum_{k=1}^{d}\sigma_{i,k}\s_{k,j}$. For a locally bounded function $b:\R\to\S_{+}^{d\times d}$ we let $\b_{i,k}$ be such that $\b_{i,k}'(v)=\sigma_{i,k}(v)$. Similarly, for $\psi\in C_{c}^{\infty}(\R_{v})$ we let $\b_{i,j}^{\psi}$ be such that $(\b_{i,k}^{\psi})'(v)=\psi(v)\sigma_{i,k}(v)$. We further introduce the kinetic function
\[
\chi(u,v):=1_{v<u}-1_{v<0}.
\]
Analogously, for a function $u:[0,T]\times\R^{d}\to\R$ we set $f(t,x,v):=\chi(u(t,x),v):=1_{v<u(t,x)}-1_{v<0}.$ We use the short-hand notation $|\xi|\sim2^{j}$ for the set $\{\xi\in\R:\,2^{j-1}\le|\xi|\le2^{j+1}\}$. For $u\in\R$ we set $u^{[m]}:=|u|^{m-1}u$. For two non-negative numbers $a,b\in\R_{+}$ we write $a\lesssim b$ if there exists a constant $C>0$ such that $a\le Cb$.

\section{Anisotropic case\label{sec:Anisotropic-case}}

We consider equations of the form 
\begin{align}
\partial_{t}f(t,x,v)+a(v)\cdot\nabla_{x}f(t,x,v)-\div(b(v)\nabla_{x}f(t,x,v)) & =:\mcL(\partial_{t},\nabla_{x},v)f(t,x,v)\label{eq:kinetic_aniso}\\
 & =g_{0}(t,x,v)+\partial_{v}g_{1}(t,x,v),\nonumber 
\end{align}
where $a:\R\to\R^{d}$, $b:\R\to\S_{+}^{d\times d}$ are $C^{1}$. The operator $\mcL$ is given by its symbol
\begin{equation}
\mcL(i\tau,i\xi,v)=i\tau+ia(v)\cdot\xi-(\xi,b(v)\xi).\label{eq:mcL}
\end{equation}
In this section we will derive regularity estimates for the velocity average, for $\phi\in C_{b}^{\infty}(\R_{v})$,
\[
\bar{f}(t,x):=\int f(t,x,v)\phi(v)\,dv.
\]
These regularity properties are obtained by using a suitable micro-local decomposition of $f$ in Fourier space, which in turn relies on the so-called truncation property satisfied by the multiplier $\mcL$ (cf.~Appendix \ref{sec:Truncation-property-and} below). In contrast to previous results, we will make use of singular moments of $g_{1}$, that is, for $\gamma\in(0,1)$, 
\[
g_{1}(t,x,v)|v|^{-\gamma}\in\begin{cases}
L^{q}(\R_{t}\times\R_{x}^{d}\times\R_{v}), & 1<q\le2\\
\mcM_{TV}(\R_{t}\times\R_{x}^{d}\times\R_{v}), & q=1.
\end{cases}
\]
An additional difficulty arises in the use of bootstrapping arguments. In the theory of averaging Lemmata, optimal regularity estimates are typically obtained by bootstrapping a first non-optimal regularity estimate. This argument, however, can only be applied if the aspired final order of regularity is less than one. Therefore, we have to devise a proof which avoids the use of a bootstrapping argument, which is achieved in Section \ref{sec:Truncation-property-and} by improving a fundamental $L^{p}$ estimate on a class of Fourier multipliers by directly exploiting regularity of $f$ in the velocity direction. 

\subsection{Anisotropic averaging lemma}
\begin{lem}
\label{lem:av}Let $f\in L_{t,x}^{p}(H_{v}^{\s,p})$ for $1<p\le2$, $\s\in(0,1)$ solve, in the sense of distributions,
\begin{equation}
\mcL(\partial_{t},\nabla_{x},v)f(t,x,v)=\D_{x}^{\frac{\eta}{2}}g_{0}(t,x,v)+\partial_{v}\D_{x}^{\frac{\eta_{}}{2}}g_{1}(t,x,v)\text{ on }\R_{t}\times\R_{x}^{d}\times\R_{v}\label{eq:eqn}
\end{equation}
with $g_{i}$ being locally bounded measures satisfying 
\begin{equation}
|g_{0}|(t,x,v)+|g_{1}|(t,x,v)|v|^{-\gamma}\in\begin{cases}
L^{q}(\R_{t}\times\R_{x}^{d}\times\R_{v}), & 1<q\le2\\
\mcM_{TV}(\R_{t}\times\R_{x}^{d}\times\R_{v}), & q=1,
\end{cases}\label{eq:measure_bound}
\end{equation}
for some $\gamma\ge0$, $\eta\ge0$, $1\le q\le p$ and $\mcL(\partial_{t},\nabla_{x},v)$ as in \eqref{eq:kinetic_aniso} with corresponding symbol $\mcL(i\tau,i\xi,v)$ as in \eqref{eq:mcL}. Let $I\subseteq\R$ be a not necessarily finite interval and set 
\[
\o_{\mcL}(J;\d):=\sup_{\tau\in\R,\,\xi\in\R^{d},|\xi|\sim J}|\Omega_{\mcL}(\tau,\xi;\d)|,\quad\Omega_{\mcL}(\tau,\xi;\d)=\{v\in I:|\mcL(i\tau,i\xi,v)|\le\d\},
\]
and suppose that the following non-degeneracy condition holds: There exist $\a\in(0,q'),\b>0$ such that 
\begin{equation}
\o_{\mcL}(J;\d)\lesssim(\frac{\d}{J^{\b}})^{\a}\quad\forall\d\ge1,\ J\ge1.\label{eq:cdt1}
\end{equation}
Moreover, assume that there exist $\l\ge0$ and $\mu\in[0,1]$ such that, $\forall\d\ge1,\ J\ge1$, 
\begin{equation}
\sup_{\tau,|\xi|\sim J}\sup_{v\in\Omega_{\mcL}(\tau,\xi;\d)}|\partial_{v}\mcL(i\tau,i\xi,v)||v|^{\gamma}\lesssim J^{\l}\d^{\mu}\label{eq:cdt2}
\end{equation}
and $\frac{\a\b}{q'}\le\l+\eta$. Then, for all $s\in[0,s^{*})$, $\td p\in[1,p^{*})$, $\phi\in C_{b}^{\infty}(I)$, $T\ge0$ and $\mcO\subset\subset\R^{d}$, there is a $C\ge0$ such that 
\begin{align*}
\|\int f(t,x,v)\phi(v)\,dv\|_{L^{\td p}([0,T];\dot{W}^{s,\td p}(\mcO))} & \le C\big(\|g_{0}\phi\|_{L_{t,x,v}^{q}}+\||v|^{-\gamma}g_{1}\phi\|_{L_{t,x,v}^{q}}+\|g_{1}\phi'\|_{L_{t,x,v}^{q}}\\
 & +\|f\phi\|_{L_{t,x}^{p}(H_{v}^{\s,p})}+\|f\phi\|_{L_{t,x}^{q}L_{v}^{1}}+\|f\phi\|_{L_{t}^{\td p}L_{x,v}^{1}}\big)
\end{align*}
with $s^{*}:=(1-\t)\frac{\a\b}{r}+\t(\frac{\a\b}{q'}-\l-\eta),$ where $\t=\theta_{\a}$ and $p^{*}$ are given by
\[
\t:=\frac{\frac{\a}{r}}{\a(\frac{1}{r}-\frac{1}{q'})+1}\in(0,1),\,\frac{1}{p^{*}}:=\frac{1-\t}{p}+\frac{\t}{q},\,r\in(\frac{p'}{1+\s p'},p']\cap(1,\infty).
\]
\end{lem}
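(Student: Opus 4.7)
The approach is the classical averaging-lemma strategy via a micro-local Fourier decomposition, with two essential refinements over the existing literature: (i) exploiting the singular moments $|v|^{-\gamma} g_1 \in L^q$ provided by \eqref{eq:measure_bound}, and (ii) exploiting the velocity regularity $f \in L^p_{t,x}(H^{\sigma,p}_v)$ to control the degenerate contribution, so that no bootstrapping is required. First I would perform a Littlewood--Paley decomposition $f = \sum_{j \ge 0} f_j$ in $x$, with $\widehat{f_j}$ supported on the dyadic annulus $|\xi| \sim J = 2^j$. Within each shell I introduce a smooth cutoff $\chi(\mathcal{L}(i\tau, i\xi, v)/\delta)$ localizing in $v$ to the degenerate set $\Omega_{\mathcal L}(\tau,\xi;\delta)$, split $f = (1-\chi) f + \chi f$, and let the threshold $\delta = \delta(J)$ be optimized at the end of the argument.

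On the non-degenerate piece $(1-\chi) f$, the symbol is invertible and equation \eqref{eq:eqn} gives
\[
(1-\chi)\, \widehat{f\phi} \;=\; (1-\chi)\, \mathcal L^{-1}\bigl(|\xi|^\eta \widehat{g_0 \phi} + |\xi|^\eta \partial_v \widehat{g_1 \phi}\bigr).
\]
Integrating in $v$ against $\phi$ and performing an integration by parts in $v$ in the $g_1$-term, the $v$-derivative falls either on $\mathcal L^{-1}$ (producing a factor $-\mathcal L^{-2} \partial_v \mathcal L$), on $\phi$ (producing the commutator norm $\|g_1 \phi'\|_{L^q_{t,x,v}}$), or on $(1-\chi)$, which localizes the integrand to $|\mathcal L| \sim \delta$. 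On this set \eqref{eq:cdt2} gives $|\partial_v \mathcal L| \lesssim J^\lambda \delta^\mu |v|^{-\gamma}$, and the factor $|v|^{-\gamma}$ pairs exactly with the singular moment $|v|^{-\gamma} g_1 \in L^q$ from \eqref{eq:measure_bound}. The truncation property of $\mathcal L$, whose $L^p$ version is proved in Appendix \ref{sec:Truncation-property-and}, then upgrades these pointwise multiplier bounds to genuine $L^p$ Fourier-multiplier estimates, which is precisely what allows the integrability exponent $\tilde p$ to exceed $p$ in the conclusion.

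On the degenerate piece $\chi f$, the symbol cannot be inverted, so I estimate $\int_{\Omega_{\mathcal L}} f\phi\, dv$ directly using the measure bound $|\Omega_{\mathcal L}| \lesssim (\delta/J^\beta)^\alpha$ from \eqref{eq:cdt1}. Two complementary controls are available: the crude $\|f\phi\|_{L^q_{t,x}L^1_v}$, and a refined bound obtained from the velocity regularity via a H\"older-type inequality
\[
\Bigl\| \int_{\Omega_{\mathcal L}} f\phi\, dv \Bigr\|_{L^p_{t,x}} \;\lesssim\; |\Omega_{\mathcal L}|^{1/r'} \|f\phi\|_{L^p_{t,x}(H^{\sigma,p}_v)}
\]
valid for $r \in (p'/(1+\sigma p'),\, p']$. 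Real interpolation with parameter $\theta$ between the non-degenerate and degenerate estimates, combined with the optimization $\delta \sim J^{\alpha\beta/(\alpha/r+1)}$, produces the regularity exponent $s^* = (1-\theta)\frac{\alpha\beta}{r} + \theta(\frac{\alpha\beta}{q'} - \lambda - \eta)$; the compatibility condition $\frac{\alpha\beta}{q'} \le \lambda + \eta$ is exactly what guarantees that the two regimes balance correctly under this interpolation.

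The dyadic blocks are then assembled by a square-function / Plancherel argument, so that $\sum_j 2^{js}\|\text{contribution from shell }J=2^j\|$ converges as a geometric series for any $s<s^*$. The integrability gain from $p$ to $p^*$ is a consequence of the same real interpolation, and $\tilde p < p^*$ is obtained by a standard embedding. The step I expect to be the main obstacle is the precise bookkeeping of the commutator terms produced by the $v$-integration by parts --- these are what generate the $\|g_1\phi'\|_{L^q_{t,x,v}}$ and $\||v|^{-\gamma}g_1\phi\|_{L^q_{t,x,v}}$ norms on the right-hand side --- together with the calibration of $\delta(J)$ so that the non-degenerate and degenerate regimes balance for the prescribed $\theta$. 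Extracting genuine gain from the velocity regularity $H^{\sigma,p}_v$ in a way that bypasses the usual bootstrap, via the truncation property of Appendix \ref{sec:Truncation-property-and}, is the substantive technical novelty.
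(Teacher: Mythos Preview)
Your overall strategy matches the paper's proof: Littlewood--Paley localization in $x$, a smooth cutoff $\psi_0(\mcL/\delta)$ splitting the degenerate and non-degenerate regimes, inversion of $\mcL$ and integration by parts in $v$ on the non-degenerate piece (producing exactly the $\|g_1\phi'\|$ and $\||v|^{-\gamma}g_1\phi\|$ terms via \eqref{eq:cdt2}), and use of the velocity regularity together with Lemma~\ref{lem:bsc_est} on the degenerate piece. These are indeed the two refinements over \cite{TT07} that drive the argument.

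There are, however, two points where your description diverges from what actually works. First, the interpolation step is muddled: you write both ``real interpolation with parameter $\theta$'' and ``optimization $\delta\sim J^{\alpha\beta/(\alpha/r+1)}$'', but these are alternatives, not ingredients to be combined. The paper keeps $\delta$ \emph{independent of $J$}. With $\delta$ fixed, the degenerate piece lands in $\tilde L^p_t B^{\alpha\beta/r}_{p,\infty}$ with norm $\lesssim\delta^{\alpha/r}$ and the non-degenerate piece in $\tilde L^q_t B^{\alpha\beta/q'-\lambda-\eta}_{q,\infty}$ with norm $\lesssim\delta^{\alpha/q'-1}$; real interpolation is then carried out via the $K$-functional, and the choice of $\delta$ in terms of the $K$-functional variable $z$ is what produces $\theta$. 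Optimizing $\delta$ shell-by-shell is a different route (closer to the classical Tadmor--Tao bookkeeping) and would not be combined with abstract real interpolation; either approach can reach $s^*$, but your write-up conflates them.

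Second, your remark that the truncation property ``is precisely what allows the integrability exponent $\tilde p$ to exceed $p$'' is incorrect for this lemma. Since $q\le p$ one has $\frac{1}{p^*}=\frac{1-\theta}{p}+\frac{\theta}{q}\ge\frac{1}{p}$, so $p^*\le p$ and hence $\tilde p<p$ here. The genuine integrability gain happens only in the isotropic lemma (Lemma~\ref{lem:av-iso}), where the isotropic truncation property removes the restriction $p\le2$. Also, the exponent on the degenerate piece should be $|\Omega_\mcL|^{1/r}$, not $|\Omega_\mcL|^{1/r'}$; this is the output of Lemma~\ref{lem:bsc_est} and is consistent with the range $r\in(\frac{p'}{1+\sigma p'},p']$ you quote.
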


\begin{proof}
Let $\vp_{0}$, $\vp_{1}$ be smooth functions with $\vp_{0}$ supported in $B_{1}(0)$ and $\vp_{1}$ supported in the annulus $\{\xi\in\R^{d}:\,\frac{1}{2}\le|\xi|\le2\}$ and 
\[
\vp_{0}(\xi)+\sum_{j\in\N}\vp_{1}(2^{-j}\xi)=1,\quad\forall\xi\in\R^{d}.
\]
By considering the decomposition $f=f_{0}+f_{1}$ with 
\begin{align*}
f_{0}: & =\mcF_{x}^{-1}[\vp_{0}(\xi)\mcF_{x}f],\,f_{1}:=\sum_{j\in\N}\mcF_{x}^{-1}[\vp_{1}(\frac{\xi}{2^{j}})\mcF_{x}f],
\end{align*}
we may assume without loss of generality that $f$ has Fourier transform supported on $B_{1}(0)^{c}$, since for all $\eta\in[1,\infty)$ 
\begin{equation}
\|\int f_{0}\phi\,dv\|_{L_{t}^{\eta}\dot{W}_{x}^{s,\eta}}\le\|f\phi\|_{L_{t}^{\eta}L_{x,v}^{1}}.\label{eq:low_modes-homog}
\end{equation}

Partially inspired by \cite[Averaging Lemma 2.3]{TT07} we consider a micro-local decomposition of $f$ with regard to the degeneracy of the operator $\mcL(\partial_{t},\nabla_{x},v)$. Let $\psi_{0}$, $\psi_{1}$ be smooth functions with $\psi_{0}$ supported in $B_{1}(0)$ and $\psi_{1}$ supported in the annulus $\{\xi\in\C:\,\frac{1}{2}\le|\xi|\le2\}$ and 
\[
\psi_{0}(\xi)+\sum_{k\in\N}\psi_{1}(2^{-k}\xi)=1,\quad\forall\xi\in\C.
\]
For $\d>0$ to be specified later we write 
\begin{align*}
f & =\psi_{0}\left(\frac{\mcL(\partial_{t},\nabla_{x},v)}{\d}\right)f+\sum_{k\in\N}\psi_{1}\left(\frac{\mcL(\partial_{t},\nabla_{x},v)}{\d2^{k}}\right)f\\
 & =:f^{0}+f^{1},
\end{align*}
where, for $k\in\N\cup\{0\}$,
\[
\psi_{i}\left(\frac{\mcL(\partial_{t},\nabla_{x},v)}{\d2^{k}}\right):=\mcF_{t,x}^{-1}\psi_{i}\left(\frac{\mcL(i\tau,i\xi,v)}{\d2^{k}}\right)\mcF_{t,x}.
\]
Since $f$ solves \eqref{eq:eqn} we have 
\begin{equation}
\mcL(\partial_{t},\nabla_{x},v)f^{1}(t,x,v)=\sum_{k\in\N}\psi_{1}\left(\frac{\mcL(\partial_{t},\nabla_{x},v)}{\d2^{k}}\right)\left(\Delta_{x}^{\frac{\eta}{2}}g_{0}(t,x,v)+\Delta_{x}^{\frac{\eta}{2}}\partial_{v}g_{1}(t,x,v)\right)\label{eq:eqn_fK-3}
\end{equation}
and thus
\begin{align}
f^{1}(t,x,v)= & \sum_{k\in\N}\frac{1}{\d2^{k}}\td\psi_{1}\left(\frac{\mcL(\partial_{t},\nabla_{x},v)}{\d2^{k}}\right)\Delta_{x}^{\frac{\eta}{2}}g_{0}(t,x,v)\label{eq:eqn_fK-1-1}\\
 & +\sum_{k\in\N}\frac{1}{\d2^{k}}\td\psi_{1}\left(\frac{\mcL(\partial_{t},\nabla_{x},v)}{\d2^{k}}\right)\Delta_{x}^{\frac{\eta}{2}}\partial_{v}g_{1}(t,x,v)\nonumber \\
=: & f^{2}(t,x,v)+f^{3}(t,x,v),\nonumber 
\end{align}
where 
\[
\td\psi(z)=\frac{\psi(z)}{z}.
\]

In conclusion, we have arrived at the decomposition
\[
\bar{f}:=\int f\phi\,dv=\int f^{0}\phi\,dv+\int f^{2}\phi\,dv+\int f^{3}\phi\,dv=:\bar{f}^{0}+\bar{f}^{2}+\bar{f}^{3}.
\]
We aim to estimate the regularity of $\bar{f^{0}}$, $\bar{f^{2}}$, $\bar{f^{3}}$ in Besov spaces. Hence, we decompose each $f^{i}$ into Littlewood-Paley pieces with respect to the $x$-variable. Let $\vp_{0}$, $\vp_{1}$ be as above. We set, for $i=0,2,3$, 
\begin{align*}
f_{j}^{i}: & =\mcF_{x}^{-1}[\vp_{1}(\frac{\xi}{2^{j}})\mcF_{x}f^{i}],\quad\text{for }j\in\N.
\end{align*}
Then, since $f^{i}$ has Fourier transform supported on $B_{1}(0)^{c}$, 
\[
f^{i}=\sum_{j\ge1}f_{j}^{i},
\]
where $\hat{f}_{j}^{i}(\tau,\xi,v)$ is supported on frequencies $|\xi|\sim2^{j}$. 

\textit{Step 1:} $f^{0}$

Let $j\in\N$ arbitrary, fixed. Then, by Lemma \ref{lem:bsc_est} for every $r\in(\frac{p'}{1+\s p'},p']\cap(1,\infty)$, 
\begin{align*}
\|\int f_{j}^{0}\phi\,dv\|_{L_{t,x}^{p}} & \lesssim\|f_{j}\phi\|_{L_{t,x}^{p}(H_{v}^{\s,p})}\sup_{\tau,|\xi|\sim2^{j}}|\Omega_{\mcL}(\tau,\xi,\d)|^{\frac{1}{r}}\\
 & \lesssim\|f\phi\|_{L_{t,x}^{p}(H_{v}^{\s,p})}\left(\frac{\d}{(2^{j})^{\b}}\right)^{\frac{\a}{r}}.
\end{align*}
Hence, $\bar{f}^{0}=\int f^{0}\phi\,dv\in\tilde{L}{}_{t}^{p}B_{p,\infty}^{\frac{\a\b}{r}}$ (cf.~\cite[Definition 2.67]{BCD11}) with 
\[
\|\int f^{0}\phi\,dv\|_{\tilde{L}{}_{t}^{p}B_{p,\infty}^{\frac{\a\b}{r}}}\lesssim\delta^{\frac{\a}{r}}\|f\phi\|_{L_{x}^{p}(H_{v}^{\s,p})}.
\]

\textit{Step 2:} $f^{2}$

Let $j\in\N$ arbitrary, fixed. We set
\begin{align*}
f_{j}^{2,k} & =\frac{1}{\d2^{k}}\mcF_{t,x}^{-1}\vp_{1}(\frac{\xi}{2^{j}})\td\psi_{1}\left(\frac{\mcL(i\tau,i\xi,v)}{\d2^{k}}\right)|\xi|^{\eta}\mcF_{t,x}g_{0}(x,v)\\
 & =\frac{1}{\d2^{k}}\mcF_{t,x}^{-1}\td\psi_{1}\left(\frac{\mcL(i\tau,i\xi,v)}{\d2^{k}}\right)|\xi|^{\eta}\mcF_{t,x}g_{0,j}(x,v).
\end{align*}

Hence,
\begin{align*}
\int f_{j}^{2,k}\phi\,dv & =\frac{1}{\d2^{k}}\int\td\psi_{1}\left(\frac{\mcL(\partial_{t},\nabla_{x},v)}{\d2^{k}}\right)\Delta_{x}^{\frac{\eta}{2}}g_{0,j}\phi\,dv
\end{align*}
and, by Lemma \ref{lem:bsc_est} and since $|\xi|^{\eta}$ acts as a constant multiplier of order $(2^{j})^{\eta}$ on $g_{0,j}$, 
\begin{align*}
\|\int f_{j}^{2,k}\phi\,dv\|_{L_{t,x}^{q}}\lesssim & \frac{1}{\d2^{k}}\Big(\|\int\mcF_{t,x}^{-1}\td\psi_{1}\left(\frac{\mcL(i\tau,i\xi,v)}{\d2^{k}}\right)|\xi|^{\eta}\mcF_{t,x}g_{0,j}\phi\,dv\|_{L_{t,x}^{q}}\\
\lesssim & \frac{\sup_{\tau,|\xi|\sim2^{j}}|\Omega(\tau,\xi,\d2^{k})|^{\frac{1}{q'}}}{\d2^{k}}(2^{j})^{\eta}\|g_{0,j}\phi\|_{L_{t,x,v}^{q}}\\
\lesssim & \frac{1}{\d2^{k}}\left(\frac{\d2^{k}}{(2^{j})^{\b}}\right)^{\frac{\a}{q'}}(2^{j})^{\eta}\|g_{0,j}\phi\|_{L_{t,x,v}^{q}}.
\end{align*}
Hence,
\begin{align*}
\|\int f_{j}^{2}\phi\,dv\|_{L_{t,x}^{q}} & \lesssim\sum_{k\in\N}\frac{1}{\d2^{k}}\left(\frac{\d2^{k}}{(2^{j})^{\b}}\right)^{\frac{\a}{q'}}(2^{j})^{\eta}\|g_{0,j}\phi\|_{L_{t,x,v}^{q}}\\
 & \lesssim\d^{\frac{\a}{q'}-1}(2^{j})^{\eta-\frac{\a\b}{q'}}\|g_{0,j}\phi\|_{L_{t,x,v}^{q}}.
\end{align*}
In conclusion, $\int f^{2}\phi\,dv\in\tilde{L}{}_{t}^{q}B_{q,\infty}^{\frac{\a\b}{q'}-\eta}$ with 
\[
\|\int f^{2}\phi\,dv\|_{\tilde{L}{}_{t}^{q}B_{q,\infty}^{\frac{\a\b}{q'}-\eta}}\lesssim\d^{\frac{\a}{q'}-1}\|g_{0}\phi\|_{L_{t,x,v}^{q}}.
\]

\textit{Step 3:} $f^{3}$

Let $j\in\N$ arbitrary, fixed. We set
\begin{align*}
f_{j}^{3,k} & =\frac{1}{\d2^{k}}\mcF_{t,x}^{-1}\vp_{1}(\frac{\xi}{2^{j}})\td\psi_{1}\left(\frac{\mcL(i\tau,i\xi,v)}{\d2^{k}}\right)|\xi|^{\eta}\mcF_{t,x}\partial_{v}g_{1}(t,x,v)\\
 & =\frac{1}{\d2^{k}}\mcF_{t,x}^{-1}\td\psi_{1}\left(\frac{\mcL(i\tau,i\xi,v)}{\d2^{k}}\right)|\xi|^{\eta}\mcF_{t,x}\partial_{v}g_{1,j}(t,x,v).
\end{align*}

Hence,
\begin{align*}
\int f_{j}^{3,k}\phi\,dv & =\frac{1}{\d2^{k}}\int\td\psi_{1}\left(\frac{\mcL(\partial_{t},\nabla_{x},v)}{\d2^{k}}\right)\Delta_{x}^{\frac{\eta}{2}}\phi\partial_{v}g_{1,j}dv.
\end{align*}

We observe 
\begin{align*}
\int f_{j}^{3,k}\phi\,dv & =-\frac{1}{\d2^{k}}\int\partial_{v}\td\psi_{1}\left(\frac{\mcL(\partial_{t},\nabla_{x},v)}{\d2^{k}}\right)\Delta_{x}^{\frac{\eta}{2}}g_{1,j}\phi\,dv\\
 & -\frac{1}{\d2^{k}}\int\td\psi_{1}\left(\frac{\mcL(\partial_{t},\nabla_{x},v)}{\d2^{k}}\right)\Delta_{x}^{\frac{\eta}{2}}g_{1,j}\phi'\,dv\\
 & =-\frac{1}{\d2^{k}}\int\mcF_{t,x}^{-1}\Big(\td\psi{}_{1}^{'}\left(\frac{\mcL(i\tau,i\xi,v)}{\d2^{k}}\right)\frac{\partial_{v}\mcL(i\tau,i\xi,v)}{\d2^{k}}|\xi|^{\eta}\mcF_{t,x}g_{1,j}\phi\Big)dv\\
 & -\frac{1}{\d2^{k}}\int\td\psi_{1}\left(\frac{\mcL(\partial_{t},\nabla_{x},v)}{\d2^{k}}\right)\Delta_{x}^{\frac{\eta}{2}}g_{1,j}\phi'\,dv\\
 & =\frac{1}{(\d2^{k})^{2}}\int\mcF_{t,x}^{-1}\Big(\td\psi{}_{1}^{'}\left(\frac{\mcL(i\tau,i\xi,v)}{\d2^{k}}\right)\partial_{v}\mcL(i\tau,i\xi,v)|v|^{\gamma}|\xi|^{\eta}\mcF_{t,x}|v|^{-\gamma}g_{1,j}\phi\Big)dv\\
 & -\frac{1}{\d2^{k}}\int\td\psi_{1}\left(\frac{\mcL(\partial_{t},\nabla_{x},v)}{\d2^{k}}\right)\Delta_{x}^{\frac{\eta}{2}}g_{1,j}\phi'\,dv.
\end{align*}
By the Marcinkiewicz Multiplier Theorem (cf.~\cite[Theorem 5.2.4]{G14-2}) and \eqref{eq:cdt2} we have that $\partial_{v}\mcL(i\tau,i\xi,v)|v|^{\gamma}$ acts as a constant multiplier on $L^{q}$ of order $O((2^{j})^{\l}(\d2^{k})^{\mu})$ on $g_{1,j}$. Hence, using Lemma \ref{lem:bsc_est} yields
\begin{align*}
 & \|\int f_{j}^{3,k}\phi\,dv\|_{L_{t,x}^{q}}\\
 & \le\frac{1}{(\d2^{k})^{2}}\|\int\mcF_{t,x}^{-1}\td\psi{}_{1}^{'}\left(\frac{\mcL(i\tau,i\xi,v)}{\d2^{k}}\right)(\partial_{v}\mcL)(i\tau,i\xi,v)|v|^{\gamma}|\xi|^{\eta}\mcF_{t,x}|v|^{-\gamma}g_{1,j}\phi\,dv\|_{L_{t,x}^{q}}\\
 & +\frac{1}{\d2^{k}}\|\int\td\psi_{1}\left(\frac{\mcL(\partial_{t},\nabla_{x},v)}{\d2^{k}}\right)\Delta_{x}^{\frac{\eta}{2}}g_{1,j}\phi'\,dv\|_{L_{t,x}^{q}}\\
 & \lesssim\frac{\sup_{\tau,|\xi|\sim J}|\Omega(\tau,\xi,\d2^{k})|^{\frac{1}{q'}}}{(\d2^{k})^{2}}(2^{j})^{\eta+\l}(\d2^{k})^{\mu}\||v|^{-\gamma}g_{1,j}\phi\,dv\|_{L_{t,x,v}^{q}}\\
 & +\frac{\sup_{\tau,|\xi|\sim J}|\Omega(\tau,\xi,\d2^{k})|^{\frac{1}{q'}}}{\d2^{k}}(2^{j})^{\eta}\|g_{1,j}\phi'\|_{L_{t,x,v}^{q}}\\
 & \lesssim\frac{1}{(\d2^{k})^{2}}\left(\frac{\d2^{k}}{(2^{j})^{\b}}\right)^{\frac{\a}{q'}}(2^{j})^{\eta+\l}(\d2^{k})^{\mu}\||v|^{-\gamma}g_{1,j}\phi\|_{L_{t,x,v}^{q}}\\
 & +\frac{1}{\d2^{k}}\left(\frac{\d2^{k}}{(2^{j})^{\b}}\right)^{\frac{\a}{q'}}(2^{j})^{\eta}\|g_{1,j}\phi'\|_{L_{t,x,v}^{q}}\\
 & =(\d2^{k})^{-2+\frac{\a}{q'}+\mu}(2^{j})^{\eta+\l-\frac{\a\b}{q'}}\||v|^{-\gamma}g_{1,j}\phi\|_{L_{t,x,v}^{q}}+(\d2^{k})^{-1+\frac{\a}{q'}}(2^{j})^{\frac{\a\b}{q'}+\eta}\|g_{1,j}\phi'\|_{L_{t,x,v}^{q}}.
\end{align*}
Hence, for $\d\ge1$ and using $\mu\in[0,1]$, $\a<q'$, 
\begin{align*}
\|\int f_{j}^{3}\phi\,dv\|_{L_{t,x}^{q}}\lesssim & \sum_{k\in\N}(\d2^{k})^{-2+\frac{\a}{q'}+\mu}(2^{j})^{\eta+\l-\frac{\a\b}{q'}}\||v|^{-\gamma}g_{1,j}\phi\|_{L_{t,x,v}^{q}}\\
 & +(\d2^{k})^{-1+\frac{\a}{q'}}(2^{j})^{\frac{\a\b}{q'}+\eta}\|g_{1,j}\phi'\|_{L_{t,x,v}^{q}}\\
\lesssim & \d{}^{-2+\frac{\a}{q'}+\mu}(2^{j})^{\eta+\l-\frac{\a\b}{q'}}\||v|^{-\gamma}g_{1,j}\phi\|_{L_{t,x,v}^{q}}+\d{}^{-1+\frac{\a}{q'}}(2^{j})^{\frac{\a\b}{q'}+\eta}\|g_{1,j}\phi'\|_{L_{t,x,v}^{q}}\\
\lesssim & \d{}^{-1+\frac{\a}{q'}}(2^{j})^{\eta+\l-\frac{\a\b}{q'}}(\||v|^{-\gamma}g_{1,j}\phi\|_{L_{t,x,v}^{q}}+\|g_{1,j}\phi'\|_{L_{t,x,v}^{q}}).
\end{align*}
In conclusion, $\int f^{3}\phi\,dv\in\tilde{L}{}_{t}^{q}B_{q,\infty}^{\frac{\a\b}{q'}-\l-\eta}$ with 
\[
\|\int f^{3}\phi\,dv\|_{\tilde{L}{}_{t}^{q}B_{q,\infty}^{\frac{\a\b}{q'}-\l-\eta}}\lesssim\d{}^{-1+\frac{\a}{q'}}(\||v|^{-\gamma}g_{1}\phi\|_{L_{t,x,v}^{q}}+\|g_{1}\phi'\|_{L_{t,x,v}^{q}}).
\]

\textit{Step 4:} Conclusion

Since $B_{q,\infty}^{\frac{\a\b}{q'}-\eta}\hookrightarrow B_{q,\infty}^{\frac{\a\b}{q'}-\l-\eta}$ we have
\[
\bar{f}=\bar{f}^{0}+\bar{f}^{1}
\]
with $\bar{f}^{0}\in\td L_{t}^{p}B_{p,\infty}^{\frac{\a\b}{r}}$, $\bar{f}^{1}=\bar{f}^{2}+\bar{f}^{3}\in\td L_{t}^{q}B_{q,\infty}^{\frac{\a\b}{q'}-\l-\eta}$ and, for $\d\ge1$,
\begin{align*}
\|\bar{f}^{0}\|_{\tilde{L}{}_{t}^{p}B_{p,\infty}^{\frac{\a\b}{r}}} & \lesssim\delta^{\frac{\a}{r}}\|f\phi\|_{L_{t,x}^{p}(H_{v}^{\s,p})},\\
\|\bar{f}^{1}\|_{\tilde{L}{}_{t}^{q}B_{q,\infty}^{\frac{\a\b}{q'}-\l-\eta}} & \lesssim\d^{\frac{\a}{q'}-1}(\|g_{0}\phi\|_{L_{t,x,v}^{q}}+\||v|^{-\gamma}g_{1}\phi\|_{L_{t,x,v}^{q}}+\|g_{1}\phi'\|_{L_{t,x,v}^{q}}).
\end{align*}
We aim to conclude by real interpolation. We set, for $z>0$,
\begin{align*}
K(z,\overline{f}):=\inf\{ & \|\overline{f}^{1}\|_{\tilde{L}{}_{t}^{q}B_{q,\infty}^{\frac{\a\b}{q'}-\l-\eta}}+z\|\overline{f}^{0}\|_{\tilde{L}{}_{t}^{p}B_{p,\infty}^{\frac{\a\b}{r}}}:\overline{f}^{0}\in\tilde{L}{}_{t}^{p}B_{p,\infty}^{\frac{\a\b}{r}},\\
 & \overline{f}^{1}\in\tilde{L}{}_{t}^{q}B_{q,\infty}^{\frac{\a\b}{q'}-\l-\eta},\ \overline{f}=\overline{f}^{0}+\overline{f}^{1}\}.
\end{align*}
We first note the trivial estimate, since $\frac{\a\b}{q'}-\l-\eta\le0$, 
\[
K(z,\overline{f})\le\|\overline{f}\|_{\tilde{L}{}_{t}^{q}B_{q,\infty}^{\frac{\a\b}{q'}-\l-\eta}}\le\|\overline{f}\|_{\tilde{L}{}_{t}^{q}L_{x}^{q}}\le\|f\phi\|_{L_{t,x}^{q}L_{v}^{1}}\quad\forall z>0.
\]
Hence, it is enough to consider $z\le1$ in the estimates below. By the above estimates we obtain that, for $\d\ge1$, 
\begin{align*}
K(z,\overline{f}) & \le\d^{\frac{\a}{q'}-1}(\|g_{0}\phi\|_{L_{t,x,v}^{q}}+\||v|^{-\gamma}g_{1}\phi\|_{L_{t,x,v}^{q}}+\|g_{1}\phi'\|_{L_{t,x,v}^{q}})+z\delta^{\frac{\a}{r}}\|f\phi\|_{L_{t,x}^{p}(H_{v}^{\s,p})}.
\end{align*}
We now equilibrate the first and the second term on the right hand side, that is, we set 
\[
\d^{\frac{\a}{q'}-1}=z\delta^{\frac{\a}{r}},
\]
which yields
\[
\d=z^{-\frac{1}{\a(\frac{1}{r}-\frac{1}{q'})+1}}\ge1.
\]
Hence, with 
\[
\t=\frac{1-\frac{\a}{q'}}{\a(\frac{1}{r}-\frac{1}{q'})+1}=1-\frac{\frac{\a}{r}}{\a(\frac{1}{r}-\frac{1}{q'})+1}
\]
we obtain, for $|z|\le1$, 
\begin{align*}
K(z,\overline{f}) & \le z^{\t}(\|g_{0}\phi\|_{L_{t,x,v}^{q}}+\||v|^{-\gamma}g_{1}\phi\|_{L_{t,x,v}^{q}}+\|g_{1}\phi'\|_{L_{t,x,v}^{q}}+\|f\phi\|_{L_{t,x}^{p}(H_{v}^{\s,p})}).
\end{align*}
Note that $\t\in(0,1)$ since $\a<q'$. Consequently, for $\tau\in(0,\t)$ and $\frac{1}{p_{\tau}}=\frac{1-\tau}{q}+\frac{\tau}{p}$,
\begin{align*}
\|\overline{f}\|_{(\tilde{L}{}_{t}^{q}B_{q,\infty}^{\frac{\a\b}{q'}-\l-\eta},\tilde{L}{}_{t}^{p}B_{p,\infty}^{\frac{\a\b}{r}})_{\tau,p_{\tau}}}^{p_{\tau}} & =\|z^{-\tau}K(z,\overline{f})\|_{L_{*}^{p_{\tau}}(0,\infty)}^{p_{\tau}}\\
 & =\|z^{-\tau}K(z,\overline{f})\|_{L_{*}^{p_{\tau}}(0,1)}^{p_{\tau}}+\|z^{-\tau}K(z,\overline{f})\|_{L_{*}^{p_{\tau}}(1,\infty)}^{p_{\tau}}\\
 & \le\|z^{\t-\tau}\|_{L_{*}^{p_{\tau}}(0,1)}^{p_{\tau}}\big(\|g_{0}\phi\|_{L_{t,x,v}^{q}}+\||v|^{-\gamma}g_{1}\phi\|_{L_{t,x,v}^{q}}\\
 & +\|g_{1}\phi'\|_{L_{t,x,v}^{q}}+\|f\phi\|_{L_{t,x}^{p}(H_{v}^{\s,p})}\big)^{p_{\tau}}+\|z^{-\tau}\|_{L_{*}^{p_{\tau}}(1,\infty)}^{p_{\tau}}\|f\phi\|_{L_{t,x}^{q}L_{v}^{1}}^{p_{\tau}}\\
 & \lesssim\|g_{0}\phi\|_{L_{t,x,v}^{q}}^{p_{\tau}}+\||v|^{-\gamma}g_{1}\phi\|_{L_{t,x,v}^{q}}^{p_{\tau}}+\|g_{1}\phi'\|_{L_{t,x,v}^{q}}^{p_{\tau}}\\
 & +\|f\phi\|_{L_{t,x}^{p}(H_{v}^{\s,p})}^{p_{\tau}}+\|f\phi\|_{L_{t,x}^{q}L_{v}^{1}}^{p_{\tau}}.
\end{align*}
Let
\begin{align*}
s<s^{*} & :=(1-\t)(\frac{\a\b}{q'}-\l-\eta)+\t\frac{\a\b}{r}
\end{align*}
From \cite[p. 98]{BCD11} we recall, for $\ve>0$, 
\begin{align*}
\tilde{L}{}_{t}^{q}B_{q,\infty}^{\frac{\a\b}{q'}-\l-\eta} & \hookrightarrow\tilde{L}{}_{t}^{q}B_{q,1}^{\frac{\a\b}{q'}-\l-\eta-\ve}\hookrightarrow L{}_{t}^{q}B_{q,1}^{\frac{\a\b}{q'}-\l-\eta-\ve}
\end{align*}
and analogously for $\tilde{L}{}_{t}^{p}B_{p,\infty}^{\frac{\a\b}{r}}$. Thus, using \cite[Section 5.6 and Theorem 6.4.5]{BL76} and choosing $\ve>0$ small enough yields 
\begin{align*}
(\tilde{L}{}_{t}^{q}B_{q,\infty}^{\frac{\a\b}{q'}-\l-\eta},\tilde{L}{}_{t}^{p}B_{p,\infty}^{\frac{\a\b}{r}})_{\tau,p_{\tau}}\hookrightarrow & L^{p_{\tau}}(B_{q,1}^{\frac{\a\b}{q'}-\l-\eta-\ve},B_{p,1}^{\frac{\a\b}{r}-\ve})_{\tau,p_{\tau}}\\
\hookrightarrow & L_{t}^{p_{\tau}}B_{p_{\tau},p_{\tau}}^{s}\hookrightarrow L{}_{t}^{p_{\tau}}W_{x}^{s,p_{\tau}}.
\end{align*}
Hence, choosing $\tau\in(0,\t)$ large enough and recalling \eqref{eq:low_modes-homog}, for all $p<p^{*}$ with $\frac{1}{p^{*}}=\frac{1-\t}{q}+\frac{\t}{p}$ and all $\mcO\subset\R^{d}$ compact, we have 
\begin{align*}
\|\bar{f}\|_{L^{p}([0,T];\dot{W}^{s,p}(\mcO))} & \lesssim\|g_{0}\phi\|_{L_{t,x,v}^{q}}+\||v|^{-\gamma}g_{1}\phi\|_{L_{t,x,v}^{q}}+\|g_{1}\phi'\|_{L_{t,x,v}^{q}}\\
 & +\|f\phi\|_{L_{t,x}^{p}(H_{v}^{\s,p})}+\|f\phi\|_{L_{t,x}^{q}L_{v}^{1}}+\|f\phi\|_{L_{t}^{p}L_{x,v}^{1}}.
\end{align*}
\end{proof}
\begin{rem}
\label{rem:v-loc}In the above averaging Lemma we do not require $\phi$ to have compact support, nor $I$ to be a bounded interval. We note that if $I$ and $\supp\phi$ are unbounded, then the non-degeneracy condition \eqref{eq:cdt1} entails a growth condition on $\mcL(i\tau,i\xi,v)$. 

This becomes clear when looking at specific examples, such as porous media equations with nonlinearity $B(u)$, which in kinetic form corresponds to \eqref{eq:kinetic_aniso} with $a\equiv0$, $b(v)=B'(v)Id$. In this case, $|\mcL(i\tau,i\xi,v)|\ge|\xi|^{2}b(v)$ and thus
\begin{align*}
\o_{\mcL}(J;\d) & =\sup_{\tau,|\xi|\sim J}|\{v\in\supp\phi:|\mcL(i\tau,i\xi,v)|\le\d\}|\\
 & \le\sup_{|\xi|\sim J}|\{v\in\supp\phi:|b(v)|\le\d|\xi|^{-2}\}|\le|b^{-1}(B_{\d|J|^{-2}}(0))\cap\supp\phi|.
\end{align*}
Hence, in the case $\supp\phi=\R$ condition \eqref{eq:cdt1} becomes, roughly speaking, $|b^{-1}(B_{r}(0))|\lesssim r^{\a}$ for all $r>0$.
\end{rem}

\subsection{Anisotropic parabolic-hyperbolic equations}

In this section we consider parabolic-hyperbolic equations of the type
\begin{align}
\partial_{t}u+\div A(u) & =\div(b(u)\nabla u)+S(t,x)\quad\text{on }(0,T)\times\R_{x}^{d}\label{eq:par-hyp}\\
u(0) & =u_{0}\quad\text{on }\R_{x}^{d},\nonumber 
\end{align}
where
\begin{align}
u_{0} & \in L^{1}(\R_{x}^{d}),\,S\in L^{1}([0,T]\times\R_{x}^{d}),\,T\ge0,\nonumber \\
a:=A' & \in C(\R;\R^{d})\cap C^{1}(\R\setminus\{0\};\R^{d}),\label{eq:ph-as}\\
b=(b_{jk})_{j,k=1\dots d} & \in C(\R;S_{+}^{d\times d})\cap C^{1}(\R\setminus\{0\};S_{+}^{d\times d}).\nonumber 
\end{align}
The corresponding kinetic form for 
\begin{equation}
f(t,x,v)=\chi(u(t,x),v)\label{eq:par_hyp_char_fctn}
\end{equation}
reads (cf.~\cite{CP03})  
\begin{align}
\mcL(\partial_{t},\nabla_{x},v)f(t,x,v) & =\partial_{t}f+a(v)\cdot\nabla_{x}f-\div(b(v)\nabla_{x}f)\label{eq:kinetic_ph-3}\\
 & =\partial_{v}q+S(t,x)\d_{u(t,x)=v}(v),\nonumber 
\end{align}
where $q\in\mcM^{+}$ and $\mcL$ is identified with the symbol 
\begin{equation}
\mcL(i\tau,i\xi,v):=i\tau+a(v)\cdot i\xi-(b(v)\xi,\xi).\label{eq:ph-symbol}
\end{equation}
We will use the terms kinetic and entropy solution synonymously. From \cite{CP03} we recall the definition of entropy/kinetic solutions to \eqref{eq:par-hyp}.
\begin{defn}
\label{def:kinetic_sol-1}We say that $u\in C([0,T];L^{1}(\R^{d}))$ is an entropy solution to \eqref{eq:par-hyp} if $f=\chi(u)$ satisfies

\begin{enumerate}
\item For any non-negative $\psi\in\mcD(\R)$, $k=1,\dots,d$,
\[
\sum_{i=1}^{d}\partial_{x_{i}}\b_{ik}^{\psi}(u)\in L^{2}([0,T]\times\R^{d}).
\]
\item For any two non-negative functions $\psi_{1},\psi_{2}\in\mcD(\R)$, $k=1,\dots,d$,
\[
\sqrt{\psi_{1}(u(t,x))}\sum_{i=1}^{d}\partial_{x_{i}}\b_{ik}^{\psi_{2}}(u(t,x))=\sum_{i=1}^{d}\partial_{x_{i}}\b_{ik}^{\psi_{1}\psi_{2}}(u(t,x))\quad\text{a.e.}.
\]
\item There are non-negative measures $m,n\in\mcM^{+}$ such that, in the sense of distributions,
\[
\partial_{t}f+a(v)\cdot\nabla_{x}f-\div(b(v)\nabla_{x}f)=\partial_{v}(m+n)+\delta_{v=u(t,x)}S\quad\text{on }(0,T)\times\R_{x}^{d}\times\R_{v}
\]
where $n$ is defined by
\[
\int\psi(v)n(t,x,v)\,dv=\sum_{k=1}^{d}\left(\sum_{i=1}^{d}\partial_{x_{i}}\b_{ik}^{\psi}(u(t,x))\right)^{2}
\]
for any $\psi\in\mcD(\R)$ with $\psi\ge0.$
\item We have
\[
\int(m+n)\,dxdt\le\mu(v)\in L_{0}^{\infty}(\R),
\]
where $L_{0}^{\infty}$ is the space of $L^{\infty}$-functions vanishing for $|v|\to\infty$.
\end{enumerate}
\end{defn}

 A sketch of the proof of well-posedness of entropy solutions is given in Appendix \ref{app:kin_solutions} below. For notational convenience we set $q=m+n$ in the following. We first establish the following a-priori bound 
\begin{lem}
\label{lem:ph-est}Let $u$ be the unique entropy solution to \eqref{eq:par-hyp} with $u_{0}\in(L^{1}\cap L^{2-\gamma})(\R_{x}^{d})$, $S\in(L^{1}\cap L^{2-\gamma})([0,T]\times\R_{x}^{d})$ for some $\gamma\in(-\infty,1)$. Then, there is a constant $C=C(T,\g)\ge0$ such that
\begin{align}
 & \sup_{t\in[0,T]}\|u(t)\|_{L_{x}^{2-\gamma}}^{2-\gamma}+(1-\gamma)\int_{0}^{T}\int_{\R^{d+1}}|v|^{-\gamma}q\,dvdxdr\label{eq:par-hyp-energy-bound}\\
 & \le C\big(\|u_{0}\|_{L_{x}^{2-\gamma}}^{2-\gamma}+\|S\|_{L_{t,x}^{2-\gamma}}^{2-\gamma}\big).\nonumber 
\end{align}
Moreover, for $\eta\in C_{c}^{\infty}(\R_{v})$, $t\in[0,T]$ we have
\begin{align}
 & \int_{\R_{x}^{d}}\eta(u(t))dx+\int_{0}^{t}\int_{\R^{d+1}}\eta''(v)q\,dvdxdr\label{eq:par-hyp-energy-bound-1}\\
 & \le\int_{\R_{x}^{d}}\eta(u_{0})dx+\|\eta'\|_{_{L_{v}^{\infty}}}\|S\|_{L_{t,x}^{1}}.\nonumber 
\end{align}
\end{lem}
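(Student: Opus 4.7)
The plan is to derive both inequalities from a single master entropy identity obtained by testing the kinetic form \eqref{eq:kinetic_ph-3} against $\eta'(v)$ for suitable convex $\eta$. Setting $A_\eta(u):=\int_0^u\eta'(v)a(v)\,dv$ and $B_\eta^{jk}(u):=\int_0^u\eta'(v)b_{jk}(v)\,dv$, the identity $\int_\R\eta'(v)\chi(u,v)\,dv=\eta(u)-\eta(0)$ combined with integration by parts in $v$ on the $\partial_v q$ term and the observation that the transport/diffusion terms become exact spatial divergences should produce, for a kinetic solution $u$,
\begin{equation*}
\int_{\R^d}\eta(u(t))\,dx+\int_0^t\!\!\int_{\R^d}\!\!\int_\R \eta''(v)\,q\,dv\,dx\,dr=\int_{\R^d}\eta(u_0)\,dx+\int_0^t\!\!\int_{\R^d}\eta'(u)\,S\,dx\,dr,
\end{equation*}
where the spatial flux and second-order terms vanish after integration over $\R^d$ using $u\in L^\infty_tL^1_x$ (standard kinetic a priori bound). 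For \eqref{eq:par-hyp-energy-bound-1}, the assumption $\eta\in C_c^\infty$ convex is directly admissible in this identity, and the trivial bound $|\eta'(u)S|\le\|\eta'\|_\infty|S|$ together with taking the supremum over $t\in[0,T]$ yields the stated inequality.

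For \eqref{eq:par-hyp-energy-bound} the target entropy is $\eta(v)=|v|^{2-\gamma}/(2-\gamma)$, whose derivatives are $\eta'(v)=|v|^{1-\gamma}\mathrm{sgn}(v)$ and $\eta''(v)=(1-\gamma)|v|^{-\gamma}$, so that $(1-\gamma)|v|^{-\gamma}$ appears naturally as the weight on the dissipation term. Since this $\eta$ is not compactly supported, and for $\gamma\in(0,1)$ not smooth at $v=0$, I would approximate $\eta$ by a family of convex $\eta_{R,\varepsilon}\in C^2$ with $\eta'_{R,\varepsilon}$ compactly supported (truncation at $\pm R$) and with $\eta''_{R,\varepsilon}$ a regularization of $(1-\gamma)|v|^{-\gamma}$ near the origin, apply the identity to each $\eta_{R,\varepsilon}$, and then pass to the limit $R\to\infty$, $\varepsilon\to0$ via monotone convergence on the dissipation term and dominated convergence on the remaining terms.

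Given the limiting identity, the source term is controlled by Hölder's inequality in $x$ with exponents $\tfrac{2-\gamma}{1-\gamma}$ and $2-\gamma$, followed by Young's inequality with the same exponents:
\begin{equation*}
\int_{\R^d}|\eta'(u)S|\,dx\le\|u\|_{L_x^{2-\gamma}}^{1-\gamma}\|S\|_{L_x^{2-\gamma}}\le \varepsilon\|u\|_{L_x^{2-\gamma}}^{2-\gamma}+C_\varepsilon\|S\|_{L_x^{2-\gamma}}^{2-\gamma}.
\end{equation*}
Absorbing the $\varepsilon$-term and applying Gronwall's inequality to $\phi(t):=\tfrac{1}{2-\gamma}\|u(t)\|_{L_x^{2-\gamma}}^{2-\gamma}$ delivers the pointwise-in-time bound $\sup_{t\in[0,T]}\|u(t)\|_{L_x^{2-\gamma}}^{2-\gamma}\le C(T,\gamma)(\|u_0\|_{L_x^{2-\gamma}}^{2-\gamma}+\|S\|_{L_{t,x}^{2-\gamma}}^{2-\gamma})$; feeding this estimate back into the identity with $t=T$ then controls $(1-\gamma)\int_0^T\!\iint|v|^{-\gamma}q\,dv\,dx\,dr$ by the same right-hand side.

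The main technical obstacle is the approximation step for the weighted entropy: one must construct a family $\eta_{R,\varepsilon}$ that is admissible in the definition of kinetic solutions (in particular $\eta'_{R,\varepsilon}$ compactly supported, so the identity holds without boundary contributions at $v=\pm\infty$) and simultaneously satisfies $\eta''_{R,\varepsilon}\uparrow(1-\gamma)|v|^{-\gamma}$ on the support of $q$. The finiteness of the limit dissipation integral is not a priori clear since $q$ is only locally finite, but is ensured a posteriori by Fatou once the $L^{2-\gamma}$ bound on $u$ is established. The rest of the argument is essentially bookkeeping: linear Gronwall, Hölder, and Young.
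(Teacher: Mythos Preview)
Your proposal is correct and follows essentially the same strategy as the paper: derive an entropy identity by testing the kinetic form against $\eta'(v)$, then specialize to a regularized version of $\eta(v)=\tfrac{1}{2-\gamma}|v|^{2-\gamma}$ and conclude via Young's inequality and Gronwall. The only organizational difference is that the paper carries out the argument on the vanishing viscosity approximation $u^{\varepsilon,\eta}$ (where the solution is smooth and the identity is immediate) and then passes to the limit using weak-$*$ convergence of $q^{\varepsilon,\eta}$ together with lower semicontinuity, whereas you work directly at the level of the kinetic solution and approximate the entropy $\eta$ instead; the paper's regularization is $(\eta^\delta)''(v)=(|v|^2+\delta)^{-\gamma/2}$, combined with an implicit spatial cut-off, which plays the same role as your $\eta_{R,\varepsilon}$.
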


\begin{proof}
By \eqref{eq:lp-bound} below we have $u\in L^{\infty}([0,T];L^{2-\gamma}(\R_{x}^{d}))$. Let $\eta\in C_{c}^{\infty}(\R_{v})$, $\eta(0)=0$, $t\in[0,T]$ and let $\vp^{n}\in C_{c}^{\infty}((0,T)\times\R_{x}^{d})$ be a sequence of cut-off functions satisfying $\vp^{n}=1$ on $(\frac{1}{n},t-\frac{1}{n})\times B_{n}(0)$. Testing \eqref{eq:kinetic_ph-3} by $\eta'(v)\vp^{n}(r,x)$ yields
\begin{align*}
-\int_{0}^{t}\int\eta(u)\partial_{r}\vp^{n}dxdr & =\int_{0}^{t}\int\eta'(v)a(v)f\cdot\nabla_{x}\vp^{n}+\sum_{i,j=1}^{d}\eta'(v)b_{ij}(v)f\partial_{x_{i}x_{j}}\vp^{n}\,dvdxdr\\
 & +\int_{0}^{t}\int\eta'(u)\vp^{n}S\,dxdr-\int_{0}^{t}\int\eta''(v)\vp^{n}q\,dvdxdr.
\end{align*}
Taking the limit $n\to\infty$ yields 
\begin{align*}
\int\eta(u(t))dx & =\int\eta(u_{0})dx+\int_{0}^{t}\int\eta'(u)S\,dxdr-\int_{0}^{t}\int\eta''(v)q\,dvdxdr
\end{align*}
and thus \eqref{eq:par-hyp-energy-bound-1}. Hölder's inequality implies 
\begin{align*}
\int\eta(u(t))dx & \lesssim\int\eta(u_{0})dx+\int_{0}^{t}\int|\eta'(u)|^{\frac{2-\gamma}{1-\gamma}}dxdr+\int_{0}^{t}\int|S|^{2-\gamma}dxdr\\
 & -\int_{0}^{t}\int\eta''(v)q\,dvdxdr.
\end{align*}
Using a standard cut-off argument we may choose $\eta=\eta^{\d}\in C^{\infty}$ with
\[
(\eta^{\d})''(v):=(|v|^{2}+\d)^{-\frac{\gamma}{2}}.
\]
Then $\eta^{\d}$ is convex and $(\eta^{\d})'(v)\le|v|^{1-\gamma}$. Hence, 
\begin{align*}
\int\eta^{\d}(u(t))dx+\int_{0}^{t}\int(\eta^{\d})''(v)q\,dvdxdr\lesssim & \int\eta^{\d}(u_{0})dx+\int_{0}^{t}\int|u|^{2-\gamma}dxdr\\
 & +\int_{0}^{t}\int|S|^{2-\gamma}dxdr.
\end{align*}
Letting $\d\to0$ yields, by Fatou's Lemma,
\begin{align*}
\int|u(t)|^{2-\gamma}dx+\int_{0}^{t}\int|v|^{-\gamma}q\,dvdxdr\lesssim & \int|u_{0}|^{2-\gamma}dx+\int_{0}^{t}\int|u|^{2-\gamma}dxdr\\
 & +\int_{0}^{t}\int|S|^{2-\gamma}dxdr.
\end{align*}
Gronwall's inequality concludes the proof. 
\end{proof}
\begin{lem}
\label{lem:ph-est-1}Let $u$ be the unique entropy solution to \eqref{eq:par-hyp} and $\psi\in C^{2}(\R)\cap Lip(\R)$ be a convex function with $|\psi(r)|\le c|r|$, for some $c>0$. Then 
\[
\int q(t,x,v)\psi''(v)dvdxdt\le C(\|u_{0}\|_{L_{x}^{1}}+\|S\|_{L_{t,x}^{1}}),
\]
for some constant $C$ depending only on $c$ and $\sup_{v}|\psi'|(v)$.
\end{lem}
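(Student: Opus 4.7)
The plan is to derive the estimate from the entropy identity \eqref{eq:par-hyp-energy-bound-1} of Lemma \ref{lem:ph-est} applied with $\eta$ replaced by $\psi$, carrying out the computation at the level of the vanishing viscosity approximation \eqref{eq:par-hyp-approx-kin} and passing to the limit $\ve,\eta\to 0$ at the end via $q^{\ve,\eta}\rightharpoonup^{*}q$ in $\mcM$. Note first that the assumption $|\psi(r)|\le c|r|$ forces $\psi(0)=0$, which is the normalization making the quantity $\int\psi(u^{\ve,\eta}(t))\,dx$ well-defined.

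Multiplying \eqref{eq:par-hyp-approx-kin} by $\psi'(v)$ and integrating in $v$ and $x$ is justified since $\psi'\in L^{\infty}$ and $f^{\ve,\eta}(t,x,\cdot)$ has compact support in $v$ for each $(t,x)$, with
\[
\int\psi'(v)f^{\ve,\eta}(t,x,v)\,dv=\psi(u^{\ve,\eta}(t,x))-\psi(0)=\psi(u^{\ve,\eta}(t,x)).
\]
The transport and second-order diffusion terms are in divergence form in $x$ and vanish after $x$-integration, the viscous regularization $\eta\Delta_{x}f^{\ve,\eta}$ contributes $\eta\Delta_{x}\psi(u^{\ve,\eta})$ which integrates to zero, integration by parts in $v$ gives $-\int\psi''(v)q^{\ve,\eta}\,dv\,dx$, and the source produces $\int\psi'(u^{\ve,\eta})S^{\ve}\,dx$. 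Integrating over $[0,T]$ yields the identity
\[
\int\psi(u^{\ve,\eta}(T))\,dx+\int_{0}^{T}\!\int\psi''(v)q^{\ve,\eta}\,dv\,dx\,dt=\int\psi(u_{0}^{\ve})\,dx+\int_{0}^{T}\!\int\psi'(u^{\ve,\eta})S^{\ve}\,dx\,dt.
\]

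Next, I use $|\psi(r)|\le c|r|$, $\|\psi'\|_{\infty}\le\sup_{v}|\psi'(v)|=:L$, and the $L^{1}$-contraction $\|u^{\ve,\eta}(T)\|_{L_{x}^{1}}\lesssim\|u_{0}^{\ve}\|_{L_{x}^{1}}+\|S^{\ve}\|_{L_{t,x}^{1}}$ (which holds for the uniformly parabolic regularized problem and may be viewed as the $\gamma\uparrow 1$ limit of \eqref{eq:par-hyp-energy-bound}), to move the $T$-term to the right and bound it by $c\|u^{\ve,\eta}(T)\|_{L_{x}^{1}}$. This gives
\[
\int_{0}^{T}\!\int\psi''(v)q^{\ve,\eta}\,dv\,dx\,dt\le c\big(\|u_{0}^{\ve}\|_{L_{x}^{1}}+\|u^{\ve,\eta}(T)\|_{L_{x}^{1}}\big)+L\|S^{\ve}\|_{L_{t,x}^{1}}\le C(c,L)\big(\|u_{0}\|_{L_{x}^{1}}+\|S\|_{L_{t,x}^{1}}\big).
\]

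Finally, since $q^{\ve,\eta}\rightharpoonup^{*}q$ in $\mcM^{+}$ and $\psi''\ge 0$ is continuous, the standard lower semicontinuity for nonnegative l.s.c.\ integrands against weakly-$*$ convergent nonnegative Radon measures gives
\[
\int_{0}^{T}\!\int\psi''(v)q\,dv\,dx\,dt\le\liminf_{\ve,\eta\to 0}\int_{0}^{T}\!\int\psi''(v)q^{\ve,\eta}\,dv\,dx\,dt,
\]
which combined with the previous bound yields the claim. The only technical point I anticipate is this last semicontinuity step, since $\psi''$ need not have compact support in $v$; this is handled by approximating $\psi''$ from below by $\psi''\chi_{R}$ with $\chi_{R}\uparrow 1$ compactly supported, applying weak-$*$ convergence for each $R$, and invoking monotone convergence as $R\to\infty$.
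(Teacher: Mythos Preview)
Your proposal is correct and follows essentially the same approach as the paper: derive the entropy identity by testing the kinetic equation with $\psi'(v)$, control the boundary terms via $|\psi(r)|\le c|r|$ together with the $L^{1}$ bound on $u$, and bound the source term by $\|\psi'\|_{\infty}\|S\|_{L^{1}_{t,x}}$. The only cosmetic difference is that the paper performs the computation directly at the level of the entropy solution (invoking a standard cut-off argument), whereas you work at the level of the vanishing viscosity approximation and pass to the limit via weak-$*$ lower semicontinuity; both routes are standard and equivalent here.
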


\begin{proof}
We first note that multiplying \eqref{eq:kinetic_ph-3} with a smooth approximation of $\sgn(v)$, integrating and taking the limit yields, for all $t\ge0$,
\begin{align*}
\int|u(t,x)|dx & \le\int|u(0,x)|+\|S\|_{L^{1}([0,T]\times\R^{d})}.
\end{align*}

From \eqref{eq:kinetic_ph-3} and a standard cut-off argument we further obtain
\begin{align*}
\partial_{t}\int\psi(u(t,x))dx & =\partial_{t}\int f(t,x,v)\psi'(v)dvdx\\
 & \le-\int\psi''(v)q(t,x,v)dvdx+\int S(t,x)\psi'(u(t,x))dx.
\end{align*}
Hence,
\begin{align*}
\int_{0}^{T}\int\psi''(v)q(t,x,v)dvdxdt & \le-\int\psi(u(\cdot,x))dx\Big|_{0}^{T}+\int_{0}^{T}\int S(r,x)\psi'(u(r,x))dxdr\\
 & \le c\int|u(0,x)|dx+c\int|u(T,x)|dx+C\|S\|_{L_{t,x}^{1}}\\
 & \le C(\|u_{0}\|_{L_{x}^{1}}+\|S\|_{L_{t,x}^{1}}).
\end{align*}
\end{proof}
We may now apply Lemma \ref{lem:av} to obtain
\begin{cor}
\label{cor:par-hyp-av}Let $u_{0}\in L^{1}(\R_{x}^{d})$, $S\in L^{1}([0,T]\times\R_{x}^{d})$, $a$, $b$ satisfy \eqref{eq:ph-as} and let $u$ be the entropy solution to \eqref{eq:par-hyp}. Further assume that the symbol $\mcL$ defined in \eqref{eq:ph-symbol} satisfies \eqref{eq:cdt1}, \eqref{eq:cdt2} for all $\gamma\in[0,1)$ large enough. Then, for all
\begin{align*}
s\in\big[0,\frac{\a}{\a+1}(\beta-\l)\big) & ,\quad p\in\Big[1,\frac{2\a+2}{2\a+1}\Big),
\end{align*}
all $\phi\in C_{c}^{\infty}(\R_{v})$, $\gamma\in[0,1)$ large enough and $\mcO\subset\subset\R^{d}$, there is a constant $C\ge0$ such that 
\begin{align}
\|\int f\phi\,dv\|_{L^{p}([0,T];W^{s,p}(\mcO))} & \le C(\|u_{0}\|_{L_{x}^{1}}+\|u_{0}\|_{L_{x}^{2-\gamma}}^{2-\gamma}+\|S\|_{L_{t,x}^{1}}+\|S\|_{L_{t,x}^{2-\gamma}}^{2-\gamma}+1).\label{eq:cor_est}
\end{align}
\end{cor}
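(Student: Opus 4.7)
The plan is to apply the averaging Lemma~\ref{lem:av} directly to the kinetic formulation \eqref{eq:kinetic_ph-3} with $g_0 = S(t,x)\,\delta_{u(t,x)=v}$, $g_1 = q$ and $\eta = 0$, using the input exponents $p = 2$, $q = 1$ (both source terms are only locally finite measures), $\sigma < 1/2$ and $r > 1$ arbitrarily close to $1$ (which is admissible since the admissibility range $r \in (p'/(1+\sigma p'),\,p']$ tends to $(1,2]$ as $\sigma \uparrow 1/2$), together with $\gamma \in [0,1)$ large enough that the non-degeneracy hypotheses \eqref{eq:cdt1}--\eqref{eq:cdt2} hold. The auxiliary condition $\alpha\beta/q' \le \lambda+\eta$ collapses to $0 \le \lambda$ and is automatic. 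A direct computation with these parameters yields
\[
\theta = \frac{\alpha}{\alpha+r}, \qquad s^{*} = \frac{\alpha(\beta-\lambda)}{\alpha+r}, \qquad p^{*} = \frac{2(\alpha+r)}{2\alpha+r},
\]
which converge to $\tfrac{\alpha}{\alpha+1}$, $\tfrac{\alpha(\beta-\lambda)}{\alpha+1}$ and $\tfrac{2\alpha+2}{2\alpha+1}$ as $r \downarrow 1$, matching the conclusion.

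The next step is to replace every norm on the right-hand side of Lemma~\ref{lem:av} by the data norms that appear in \eqref{eq:cor_est}. The source $g_0\phi$ has total variation $\int |S(t,x)||\phi(u(t,x))|\,dtdx \le \|\phi\|_\infty\|S\|_{L^1_{t,x}}$. For the singular moment of $g_1$, Lemma~\ref{lem:ph-est} applied at the chosen $\gamma$ gives $\||v|^{-\gamma}g_1\phi\|_{\mathcal{M}_{TV}} \lesssim \|\phi\|_\infty\bigl(\|u_0\|_{L^{2-\gamma}_x}^{2-\gamma}+\|S\|_{L^{2-\gamma}_{t,x}}^{2-\gamma}\bigr)$. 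For $g_1\phi'$, since $\phi \in C_c^\infty$ one can select a Lipschitz convex $\psi \in C^2$ with $\psi''(v) \ge 1$ on $\supp\phi'$ and $|\psi(r)| \le c|r|$, and then Lemma~\ref{lem:ph-est-1} controls $q(\supp\phi' \times [0,T] \times \R^d)$ by $\|u_0\|_{L^1_x}+\|S\|_{L^1_{t,x}}$.

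The norms of $f\phi$ itself are obtained from the observation that, for each $(t,x)$, the map $v \mapsto \chi(u(t,x),v)$ is (up to sign) the indicator of an interval of length $|u(t,x)|$. The classical estimate $\|\mathbf{1}_{[0,L]}\|_{W^{\sigma,2}(\R)}^{2} \lesssim L + L^{1-2\sigma}$ (for $2\sigma<1$), combined with the smoothness of $\phi$, delivers $\|f\phi\|_{L^2_{t,x}(H^{\sigma,2}_v)}^2 \lesssim_{\phi,T} \|u\|_{L^1_{t,x}}+1 \lesssim \|u_0\|_{L^1_x}+\|S\|_{L^1_{t,x}}+1$, using the standard $L^\infty_tL^1_x$ bound on $u$ (which already features in the proof of Lemma~\ref{lem:ph-est-1}). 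The norms $\|f\phi\|_{L^1_{t,x}L^1_v}$ and $\|f\phi\|_{L^{\tilde p}_{t,x}L^1_v}$ reduce respectively to $\|u\|_{L^1_{t,x}}$ and $\|u\|_{L^{\tilde p}_{t,x}}$; interpolating the $L^\infty_tL^1_x$ and $L^\infty_tL^{2-\gamma}_x$ estimates from Lemma~\ref{lem:ph-est} handles the latter, provided $\gamma$ is chosen with $2-\gamma \ge \tilde p$, which is possible because $\tilde p < p^* < 2$.

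Inserting these bounds into Lemma~\ref{lem:av} and then letting $r \downarrow 1$, $\sigma \uparrow 1/2$ produces \eqref{eq:cor_est}. The main obstacle is the simultaneous compatibility of parameter choices: $\gamma$ must be close enough to $1$ for the non-degeneracy hypothesis while still satisfying $2-\gamma \ge \tilde p$ so that $u$ has sufficient joint integrability. The substantive analytic inputs are only the singular-moment bound on $q$ via Lemma~\ref{lem:ph-est} and the jump-function $v$-regularity sketched above; no bootstrap is required because Lemma~\ref{lem:av} already produces the target regularity in one step.
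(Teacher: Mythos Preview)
Your strategy of invoking Lemma~\ref{lem:av} with $p=2$, $q=1$, $\eta=0$ and $r\downarrow 1$, $\sigma\uparrow\tfrac12$ is exactly the paper's, and your estimates on $g_0\phi$, $|v|^{-\gamma}g_1\phi$ and $g_1\phi'$ via Lemmas~\ref{lem:ph-est} and~\ref{lem:ph-est-1} are correct. The gap is that you apply Lemma~\ref{lem:av} \emph{directly} to $f=\chi(u)$ on $(0,T)\times\R^d_x\times\R_v$, whereas the Lemma requires the equation on all of $\R_t\times\R^d_x$ and, more seriously, requires $f\phi\in L^2_{t,x}(H^{\sigma,2}_v)$ globally. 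Your own estimate $\|\mathbf 1_{[0,L]}\|_{W^{\sigma,2}}^2\lesssim L+L^{1-2\sigma}$ shows this fails: integrating in $(t,x)$ produces $\int_{\{0<|u|<1\}}|u(t,x)|^{1-2\sigma}\,dtdx$, which is not controlled by any $L^p$ norm of $u$ on $\R^d$ (take $u=\varepsilon\mathbf 1_B$ with $|B|\to\infty$ and $\varepsilon|B|$ fixed). The ``$+1$'' in your bound hides exactly this divergence.

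The paper therefore first localises, multiplying by a cut-off $\varphi\in C_c^\infty((0,T)\times\R^d_x\times\R_v)$ so that $\tilde f=\varphi f$ lies in $L^2_{t,x}(H^{\sigma,2}_v)$ and the equation extends to $\R_t$. This localisation is not free: it generates commutator terms, the most delicate being $-2\nabla_x f\cdot b(v)\nabla_x\varphi$, which involves the distributional gradient of $f$. The paper controls this in $\mcM_{TV}$ via the chain-rule identity $\nabla_x f\,\sigma(v)=\delta_{u=v}\nabla_x\beta^\psi(u)$ and the a~priori $L^2$ bound $\|\sum_i\partial_{x_i}\beta^\psi_{ik}(u)\|_{L^2_{t,x}}\lesssim\|u_0\|_{L^1_x}+\|S\|_{L^1_{t,x}}+1$ inherited from the entropy formulation (see \eqref{eq:grad-part-est}); this estimate is the substantive missing ingredient in your argument. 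A further cut-off $\eta^\delta$ near $v=0$ (cf.~\eqref{eq:cut_vel}) is also needed because $a,b$ are only $C^1$ on $\R\setminus\{0\}$, so that Lemma~\ref{lem:av} applies; the limit $\delta\to0$ is then handled by another appeal to Lemma~\ref{lem:ph-est-1}. Once localised, the terms $\|\tilde f\phi\|_{L^p_{t,x}L^1_v}$ are trivially bounded by constants depending on $\varphi,\phi$, so the tension you flag between ``$\gamma$ close to $1$'' and ``$2-\gamma\ge\tilde p$'' disappears.
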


\begin{proof}
We will derive \eqref{eq:cor_est} on the level of the approximating equation \eqref{eq:visc_approx}. By convergence of the approximating solutions $u^{\ve}$ and lower-semicontinuity of the norm this is sufficient. For notational simplicity we suppress the $\ve$-dependency in the following, but note that all estimates are uniform with respect to these parameters. As in \cite[Section 7]{CP03} we observe the bound (uniformly in $\ve$), for each $\psi\in C_{c}^{\infty}(\R_{v})$, $k=1,\dots,d$,
\[
\|\sum_{i=1}^{d}\partial_{x_{i}}\b_{ik}^{\psi}(u)\|_{L_{t,x}^{2}}\lesssim\|u_{0}\|_{L_{x}^{1}}+\|S\|_{L_{t,x}^{1}}+1.
\]
We hence estimate, for any $\vp\in C_{c}^{\infty}(\R_{t}\times\R_{x}^{d}\times\R_{v})$ and $\psi\in C_{c}^{\infty}(\R_{v})$ such that $\vp\psi=\vp$,
\begin{align}
\int_{t,x,v}|\nabla f\cdot b(v)\nabla\vp| & \le\sum_{k=1}^{d}\int_{t,x,v}\left|\left(\sum_{i=1}^{d}\partial_{x_{i}}f\s_{ik}(v)\right)\left(\sum_{j=1}^{d}\s_{kj}(v)\partial_{x_{j}}\vp\right)\right|\nonumber \\
 & =\sum_{k=1}^{d}\int_{t,x,v}\left|\left(\sum_{i=1}^{d}\delta_{u(t,x)=v}\partial_{x_{i}}u\s_{ik}(v)\psi(v)\right)\left(\sum_{j=1}^{d}\s_{kj}(v)\partial_{x_{j}}\vp\right)\right|\nonumber \\
 & =\sum_{k=1}^{d}\int_{t,x}\left|\left(\sum_{i=1}^{d}\partial_{x_{i}}\b_{ik}^{\psi}(u)\right)\left(\sum_{j=1}^{d}\s_{kj}(u)(\partial_{x_{j}}\vp)(t,x,u(t,x))\right)\right|\label{eq:grad-part-est}\\
 & \le\sum_{k=1}^{d}\|\sum_{i=1}^{d}\partial_{x_{i}}\b_{ik}^{\psi}(u)\|_{L_{t,x}^{2}}\|\sum_{j=1}^{d}\s_{kj}(u)(\partial_{x_{j}}\vp)(t,x,u(t,x))\|_{L_{t,x}^{2}}\nonumber \\
 & \lesssim\|u_{0}\|_{L_{x}^{1}}+\|S\|_{L_{t,x}^{1}}+1.\nonumber 
\end{align}

We next note that due to \eqref{eq:par_hyp_char_fctn} we have $\partial_{v}f(t,x,v)=\delta_{v=0}-\delta_{u(t,x)=v}$ and thus $f\in L_{t,x}^{\infty}(\dot{BV_{v}})\subseteq L_{t,x;loc}^{1}(\dot{BV_{v}})$ with $\|f\|_{L_{t,x}^{\infty}(\dot{BV_{v}})}\le2$. Moreover, by \eqref{eq:lp-bound},
\begin{equation}
\|f\|_{L_{t,x,v}^{1}}\lesssim\|u_{0}\|_{L_{x}^{1}}+\|S\|_{L_{t,x}^{1}}\label{eq:l1-bound}
\end{equation}
and $|f|\le1$. Hence, $f\in L_{t,x,v}^{1}\cap L_{t,x,v}^{\infty}$ and, for all $\s\in[0,\frac{1}{2})$,
\begin{align*}
\|f\|_{L_{t,x;loc}^{2}(B_{2,\infty}^{\s}(\R_{v}))}^{2} & =\|f\|_{L_{t,x;loc}^{2}(L_{v}^{2})}^{2}+\|\sup_{\delta>0}\sup_{0<|h|<\delta}\int_{\R}\frac{|f(t,x,v+h)-f(t,x,v)|^{2}}{|h|^{2\s}}dv\|_{L_{t,x;loc}^{2}}^{2}\\
 & \lesssim\|f\|_{L_{t,x;loc}^{1}(L_{v}^{1})}+\|\sup_{\delta>0}\sup_{0<|h|<\delta}\int_{\R}\frac{|f(t,x,v+h)-f(t,x,v)|}{|h|^{2\s}}dv\|_{L_{t,x;loc}^{2}}^{2}\\
 & \lesssim\|u_{0}\|_{L_{x}^{1}}+\|S\|_{L_{t,x}^{1}}+\|\|f(t,x,\cdot)\|_{\dot{BV_{v}}}\|_{L_{t,x;loc}^{2}}^{2}\\
 & \lesssim\|u_{0}\|_{L_{x}^{1}}+\|S\|_{L_{t,x}^{1}}+1,
\end{align*}
which implies, for all $\s\in[0,\frac{1}{2})$,
\begin{equation}
\|f\|_{L_{t,x;loc}^{2}(H_{v}^{\s,2})}\lesssim1+\|u_{0}\|_{L_{x}^{1}}+\|S\|_{L_{t,x}^{1}}.\label{eq:hs-bound}
\end{equation}
In order to apply Lemma \ref{lem:av} we hence have to localize $f$. Let $\vp\in C_{c}^{\infty}((0,T)\times\R_{x}^{d}\times\R_{v})$, $\eta^{\d}\in C^{\infty}(\R)$ satisfy $\eta^{\d}(v)\in[0,1]$ for all $v\in\R$, $|(\eta^{\d})'|\lesssim\frac{1}{\d}$, 
\begin{equation}
\eta^{\d}(v)=\begin{cases}
1 & \text{for }|v|\ge\d\\
0 & \text{for }|v|\le\frac{\d}{2}
\end{cases}\label{eq:cut_vel}
\end{equation}
and set $\vp^{\d}=\vp\eta^{\d}$. For simplicity we suppress the $\d$-index in the following. Set $\tilde{f}:=\vp f\in L_{t,x}^{2}(W_{v}^{\s,2})$, $\tilde{q}:=\vp q$. Then
\begin{align}
\partial_{t}\tilde{f} & =\vp\big(-a(v)\cdot\nabla f+\div(b(v)\nabla f)+\partial_{v}q+S\d_{u(t,x)=v}(v)\big)+f\partial_{t}\vp\nonumber \\
 & =(-a(v)\cdot\nabla\tilde{f}+\div(b(v)\nabla\tilde{f})+\partial_{v}\tilde{q}+\vp S\d_{u(t,x)=v}(v)\label{eq:localized}\\
 & +a(v)\cdot f\nabla\vp-2\nabla f\cdot b(v)\nabla\vp-f\div(b(v)\nabla\vp)-(\partial_{v}\vp)q+f\partial_{t}\vp.\nonumber 
\end{align}
Since $\vp$ is compactly supported and $q\in\mcM$, we have $\tilde{q}\in\mcM_{TV}$. Moreover, due to \eqref{eq:grad-part-est} and $S\in L_{t,x}^{1}$ we have 
\begin{align*}
g_{0}:= & \vp S\d_{u(t,x)=v}(v)+a(v)\cdot f\nabla\vp-2\nabla f\cdot b(v)\nabla\vp-f\div(b(v)\nabla\vp)\\
 & -(\partial_{v}\vp)q+f\partial_{t}\vp\in\mcM_{TV}
\end{align*}
with
\begin{equation}
\|g_{0}\|_{\mcM_{TV}}\le\|u_{0}\|_{L_{x}^{1}}+\|S\|_{L_{t,x}^{1}}+\|\partial_{v}\vp q\|_{\mcM_{TV}}+\|f\phi\partial_{t}\vp\|_{L_{t,x,v}^{1}}.\label{eq:g0}
\end{equation}
Let $s\in[0,\frac{\a}{\a+1}(\beta-\l))$ and $p\in[1,\frac{2\a+2}{2\a+1})$. Choose $\gamma\in[0,1)$ large enough and $r>1$ small enough, such that $s<(1-\theta)\frac{\a\b}{r}-\l\t$ and $p<\frac{2}{1+\t}$ where $\t=\frac{\frac{\a}{r}}{\frac{\a}{r}+1}$. We may assume $u_{0}\in L_{x}^{1}\cap L_{x}^{2-\gamma}$, $S\in L_{t,x}^{1}\cap L_{t,x}^{2-\gamma}$, otherwise there is nothing to be shown. By Lemma \ref{lem:ph-est} we have
\begin{align*}
 & \|u(t)\|_{L_{x}^{2-\gamma}}^{2-\gamma}+(1-\gamma)\int_{0}^{t}\int_{\R^{d+1}}|v|^{-\gamma}q\,dvdxdr\lesssim\|u_{0}\|_{L_{x}^{2-\gamma}}^{2-\gamma}+\|S\|_{L_{t,x}^{2-\gamma}}^{2-\gamma}.
\end{align*}

We note that, due to \eqref{eq:cut_vel} and \eqref{eq:ph-as} we may assume $a,b\in C^{1}$ without changing \eqref{eq:localized}. We now apply Lemma \ref{lem:av} with $\eta=0,$ $g_{1}=\td q$, $f=\td f$, $q=1$, $p=2$, $\s\in(0,\frac{1}{2})$ large enough, $T\ge0$, $\mcO\subseteq\R^{d}$ compact to obtain that there is a constant $C\ge0$ such that
\begin{align*}
\|\int f\vp^{\d}\phi\,dv\|_{L^{p}([0,T];\dot{W}^{s,p}(\mcO))} & \lesssim\|g_{0}^{\d}\phi\|_{\mcM_{TV}}+\||v|^{-\gamma}g_{1}^{\d}\phi\|_{\mcM_{TV}}+\|g_{1}^{\d}\phi'\|_{\mcM_{TV}}\\
 & +\|f\phi\|_{L_{t,x}^{p}(H_{v}^{\s,p})}+\|f\phi\|_{L_{t,x,v}^{1}}+\|f\phi\|_{L_{t}^{p}L_{x,v}^{1}}.
\end{align*}
Noting that 
\[
\|f\phi\|_{L_{t}^{p}L_{x,v}^{1}}\lesssim\|u\|_{L_{t}^{p}L_{x}^{1}}\lesssim\|u_{0}\|_{L_{x}^{1}},
\]
by Lemma \ref{lem:ph-est}, \eqref{eq:hs-bound}, \eqref{eq:l1-bound} and \eqref{eq:g0} we obtain that
\begin{align*}
\|\int f\vp^{\d}\phi\,dv\|_{L^{p}([0,T];\dot{W}^{s,p}(\mcO))} & \lesssim\|u_{0}\|_{L_{x}^{1}}+\|S\|_{L_{t,x}^{1}}+\|\partial_{v}\vp q\|_{\mcM_{TV}}+\|f\phi\partial_{t}\vp\|_{L_{t,x,v}^{1}}\\
 & +\|u_{0}\|_{L_{x}^{2-\gamma}}^{2-\gamma}+\|S\|_{L_{t,x}^{2-\gamma}}^{2-\gamma}+1.
\end{align*}
We next consider the limit $\d\to0$. Since $|\eta^{\d}|\le1$, the only nontrivial term appearing on the right hand side is $\|(\partial_{v}\eta^{\d})\vp q\|_{\mcM_{TV}}$. Let $\psi^{\d}$ be such that $(\psi^{\d})''=|\partial_{v}\eta^{\d}|$ and $|\psi^{\d}(r)|\le c|r|$. Then $\psi^{\d}$ satisfies the assumptions of Lemma \ref{lem:ph-est-1} uniformly in $\d$ which yields the required bound. Since $\vp$ is arbitrary, we conclude 
\begin{align*}
\|\int f\phi\,dv\|_{L^{p}([0,T];\dot{W}^{s,p}(\mcO))} & \lesssim\|u_{0}\|_{L_{x}^{1}}+\|u_{0}\|_{L_{x}^{2-\gamma}}^{2-\gamma}+\|S\|_{L_{t,x}^{1}}+\|S\|_{L_{t,x}^{2-\gamma}}^{2-\gamma}+1.
\end{align*}
Since $\phi$ is compactly supported, we have $\|\int f\phi\,dv\|_{L_{t,x}^{\infty}}\lesssim1$ which concludes the proof.
\end{proof}
\begin{thm}
\label{ex:anisoptropic_PME}Let $u_{0}\in L^{1}(\R_{x}^{d})$, $S\in L^{1}([0,T]\times\R_{x}^{d})$, $m_{j},n_{j}\ge1$, $j=1,\dots,d$ and let $u$ be the entropy solution to 
\begin{align}
\partial_{t}u+\sum_{j=1}^{d}\partial_{x_{j}}u^{n_{j}} & =\sum_{j=1}^{d}\partial_{x_{j}x_{j}}^{2}u^{[m_{j}]}+S(t,x)\quad\text{on }(0,T)\times\R^{d}\label{eq:par-hyp-2}\\
u(0) & =u_{0}\quad\text{on }\R^{d}.\nonumber 
\end{align}
We set $\underline{m}=\min(\{m_{j}:\,j=1,\dots,d\})$, $\overline{m}=\max(\{m_{j}:\,j=1,\dots,d\})$ and analogously $\underline{n}$, $\overline{n}$. Then, for all 
\[
s\in\big[1,\frac{2}{\overline{m}}\left(\frac{\underline{m}\wedge\underline{n}-1}{\overline{m}-1}\right)\big),\quad p\in\big[1,\frac{2\overline{m}}{1+\overline{m}}\big),
\]
all $\phi\in C_{c}^{\infty}(\R_{v})$, $\gamma\in[0,1)$ large enough and $\mcO\subset\subset\R^{d}$ there is a constant $C\ge0$ such that 
\begin{equation}
\|\int f\phi\,dv\|_{L^{p}([0,T];W^{s,p}(\mcO))}\le C\left(\|u_{0}\|_{L_{x}^{1}}+\|u_{0}\|_{L_{x}^{2-\gamma}}^{2-\gamma}+\|S\|_{L_{t,x}^{1}}+\|S\|_{L_{t,x}^{2-\gamma}}^{2-\gamma}+1\right).\label{eq:anis_reg_1}
\end{equation}
As a special case, for $m_{j}=n_{j}=m$, $j=1,\dots,d$, we obtain \eqref{eq:anis_reg_1} for all 
\[
s\in\big[0,\frac{2}{m}\big),\quad p\in\big[1,2\frac{m}{m+1}\big).
\]
\end{thm}

\begin{proof}
We have
\begin{align*}
\mcL(i\tau,i\xi,v) & =i\tau+i\sum_{j=1}^{d}n_{j}v^{n_{j}-1}\xi_{j}-\sum_{j=1}^{d}m_{j}|v|^{m_{j}-1}|\xi_{j}|^{2}\\
 & =:\mcL_{hyp}(i\tau,i\xi,v)+\mcL_{par}(\xi,v).
\end{align*}
Let $I\subseteq\R$ be a bounded set. Then, for $|\xi|\sim J$, 
\begin{align}
\Omega_{\mcL}(\tau,\xi;\d) & =\{v\in I:\,|\mcL(i\tau,i\xi,v)|\le\d\}\nonumber \\
 & \subseteq\Omega_{\mcL_{par}}(\xi;\d)=\{v\in I:\,\sum_{j=1}^{d}m_{j}|v|^{m_{j}-1}|\xi_{j}|^{2}\le\d\}\nonumber \\
 & \subseteq\{v\in I:\,|v|^{\overline{m}-1}J^{2}\lesssim\d\}.\label{eq:omega_est}
\end{align}
Thus,
\begin{align*}
|\Omega_{\mcL}(\tau,\xi;\d)| & \lesssim\left(\frac{\d}{J^{2}}\right)^{\frac{1}{\overline{m}-1}},
\end{align*}
i.e.~\eqref{eq:cdt1} is satisfied with $\b=2$, $\a=\frac{1}{\overline{m}-1}$. Moreover, due to \eqref{eq:omega_est}, for $|\xi|\sim J$, $v\in\Omega_{\mcL}(\tau,\xi;\d)\setminus\{0\}$,
\begin{align*}
|\partial_{v}\mcL(i\tau,i\xi,v)||v|^{\gamma} & =\Big|i\sum_{j=1}^{d}n_{j}(n_{j}-1)v^{n_{j}-2}\xi_{j}-\sum_{j=1}^{d}m_{j}(m_{j}-1)v{}^{[m_{j}-2]}|\xi_{j}|^{2}\Big||v|^{\gamma}\\
 & \lesssim|v|^{\underline{n}-2+\gamma}J+|v|^{\underline{m}-2+\gamma}J^{2}\\
 & \lesssim\d^{\frac{\underline{n}-2+\gamma}{\overline{m}-1}}J^{-\frac{2(\underline{n}-2+\gamma)}{\overline{m}-1}+1}+\d^{\frac{\underline{m}-2+\gamma}{\overline{m}-1}}J^{-\frac{2(\underline{m}-2+\gamma)}{\overline{m}-1}+2}.
\end{align*}
Using $\delta,J\ge1$ we get
\begin{align}
|\partial_{v}\mcL(i\tau,i\xi,v)||v|^{\gamma} & \lesssim\d^{\frac{\underline{m}\vee\underline{n}-2+\gamma}{\overline{m}-1}}J^{2-2\frac{\underline{m}\wedge\underline{n}-2+\gamma}{\overline{m}-1}},\label{eq:est_step}
\end{align}
i.e. \eqref{eq:cdt2} is satisfied with $\l=2-2\frac{\underline{m}\wedge\underline{n}-2+\gamma}{\overline{m}-1}$, $\mu=\frac{\underline{m}\vee\underline{n}-2+\gamma}{\overline{m}-1}$. An application of Corollary \ref{cor:par-hyp-av} with $\gamma$ close to one implies for all 
\begin{align*}
s & <s^{*}=\frac{2}{\overline{m}}\left(\frac{\underline{m}\wedge\underline{n}-1}{\overline{m}-1}\right),
\end{align*}
all $p<p^{*}=\frac{2\overline{m}}{1+\overline{m}}$ , all $\phi\in C_{c}^{\infty}(\R_{v})$, $\g\in[0,1)$ large enough, $\mcO\subset\subset\R^{d}$ that there is a constant $C\ge0$ and 
\begin{align*}
\|\int f\phi\,dv\|_{L^{p}([0,T];W^{s,p}(\mcO))} & \le C(\|u_{0}\|_{L_{x}^{1}}+\|u_{0}\|_{L_{x}^{2-\gamma}}^{2-\gamma}+\|S\|_{L_{t,x}^{1}}+\|S\|_{L_{t,x}^{2-\gamma}}^{2-\gamma}+1).
\end{align*}
\end{proof}
\begin{rem}
In Theorem \ref{ex:anisoptropic_PME} only the regularizing effect of the parabolic part is used. It may be possible that in cases $n_{j}<<m_{j}$ the hyperbolic regularizing effect would dominate. Since we are mostly interested in the parabolic regularization we do not consider this point here. For related work on hyperbolic averaging we refer to \cite{GL17}.  
\end{rem}

\section{Isotropic case\label{sec:Isotropic-case}}

In this section we consider parabolic-hyperbolic PDE with isotropic parabolic part, that is, 
\begin{align}
\partial_{t}f(t,x,v)+a(v)\cdot\nabla_{x}f(t,x,v)-b(v)\Delta_{x}f(t,x,v) & =:\mcL(\partial_{t},\nabla_{x},v)f(t,x,v)\label{eq:kinetic_isotropic}\\
 & =g_{0}(t,x,v)+\partial_{v}g_{1}(t,x,v),\nonumber 
\end{align}
where $a:\R\to\R^{d}$, $b:\R\to\R_{+}\cup\{0\}$ are twice continuously differentiable. The operator $\mcL$ is given by its symbol
\begin{align*}
\mcL(i\tau,i\xi,v) & :=\mcL_{hyp}(i\tau,i\xi,v)+\mcL_{par}(\xi,v)\\
 & :=i\tau+ia(v)\cdot\xi-b(v)|\xi|^{2},
\end{align*}
which by Appendix \ref{sec:Truncation-property-and} satisfies the truncation property uniformly in $v\in\R$. 

In this isotropic case we may work with a more restrictive non-degeneracy condition, which will allow to improve the order of integrability obtained in Theorem \ref{ex:anisoptropic_PME}.
\begin{defn}
[Isotropic truncation property]

\begin{enumerate}
\item We say that a function $m:\R_{\xi}^{d}\to\C$ is isotropic if $m$ is radial, that is, it depends only on $|\xi|^{2}$ .
\item Let $m:\R_{\xi}^{d}\times\R_{v}\to\C$ be a Caratheodory function such that $m(\cdot,v)$ is isotropic for all $v\in\R$. Then $m$ is said to satisfy the isotropic truncation property if for every bump function $\psi$ supported on a ball in $\C$, every bump function $\vp$ supported in $\{\xi\in\C:\,1\le|\xi|\le4\}$ and every $1<p<\infty$ 
\[
M_{\psi,J}f(x,v):=\mcF_{x}^{-1}\vp\left(\frac{|\xi|^{2}}{J^{2}}\right)\psi\left(\frac{m(\xi,v)}{\d}\right)\mcF_{x}f(x)
\]
is an $L_{x}^{p}$-multiplier for all $v\in\R$, $J=2^{j},\,j\in\N$ and, for all $r\ge1$,
\begin{align*}
\Big\|\|M_{\psi,J}\|_{\mcM^{p}}\Big\|_{L_{v}^{r}} & \lesssim|\Omega_{m}(J,\d)|^{\frac{1}{r}},
\end{align*}
where
\[
\Omega_{m}(J,\d):=\{v\in\R:\,|\frac{m(J,v)}{\d}|\in\supp\psi\}.
\]
\end{enumerate}
\end{defn}

\begin{example}
Consider 
\[
\mcL(\xi,v)=-|\xi|^{2}b(v),
\]
for $b:\R\to\R_{+}\cup\{0\}$ being measurable. Then $\mcL$ satisfies the isotropic truncation property.
\end{example}

\begin{proof}
Let $\vp$, $\psi$ be as in the definition of the isotropic truncation property. In order to prove that $M_{\psi,J}$ is an $L^{p}$-multiplier we will invoke the Hörmander\textendash Mihlin Multiplier Theorem \cite[Theorem 5.2.7]{G14-2}. We note that 
\[
\sup_{\xi\in\R^{d}}\vp\left(\frac{|\xi|^{2}}{J^{2}}\right)\psi\left(\frac{\mcL(\xi,v)}{\d}\right)<\infty
\]
and 
\begin{align*}
 & \partial_{\xi_{i}}\vp\left(\frac{|\xi|^{2}}{J^{2}}\right)\psi\left(\frac{\mcL(\xi,v)}{\d}\right)\\
 & =\vp'\left(\frac{|\xi|^{2}}{J^{2}}\right)\frac{|\xi|^{2}}{J^{2}}\frac{2\xi_{i}}{|\xi|^{2}}\psi\left(\frac{\mcL(\xi,v)}{\d}\right)+\vp\left(\frac{|\xi|^{2}}{J^{2}}\right)\psi'\left(\frac{\mcL(\xi,v)}{\d}\right)\frac{\mcL(\xi,v)}{\d}\frac{2\xi_{i}}{|\xi|^{2}}\\
 & =\left[\vp'\left(\frac{|\xi|^{2}}{J^{2}}\right)\frac{|\xi|^{2}}{J^{2}}\psi\left(\frac{\mcL(\xi,v)}{\d}\right)+\vp\left(\frac{|\xi|^{2}}{J^{2}}\right)\psi'\left(\frac{\mcL(\xi,v)}{\d}\right)\frac{\mcL(\xi,v)}{\d}\right]\frac{2\xi_{i}}{|\xi|^{2}}\\
 & =\tilde{\vp}\left(\frac{|\xi|^{2}}{J^{2}}\right)\tilde{\psi}\left(\frac{\mcL(\xi,v)}{\d}\right)\frac{2\xi_{i}}{|\xi|^{2}},
\end{align*}
where $\tilde{\vp}$, $\tilde{\psi}$ are bump functions with the same support properties as $\vp,\psi$. Hence, induction yields
\begin{align*}
|\partial_{\xi}^{\a}\vp\left(\frac{|\xi|^{2}}{J^{2}}\right)\psi\left(\frac{\mcL(\xi,v)}{\d}\right)| & \le\tilde{\vp^{\a}}\left(\frac{|\xi|^{2}}{J^{2}}\right)\tilde{\psi^{\a}}\left(\frac{\mcL(\xi,v)}{\d}\right)\frac{C_{\a}}{|\xi|^{|\a|}},
\end{align*}
for all multi-indices $\a$ with $|\a|\le[\frac{d}{2}]+1,$ where where $\tilde{\vp}^{\a}$, $\tilde{\psi^{\a}}$ are bump functions with the same support properties as $\vp,\psi$. The Hörmander\textendash Mihlin Multiplier Theorem thus implies that 
\[
\vp\left(\frac{|\xi|^{2}}{J^{2}}\right)\psi\left(\frac{\mcL(\xi,v)}{\d}\right)\in\mcM^{p}
\]
for all $1<p<\infty$ with 
\begin{align*}
\|\vp\left(\frac{|\xi|^{2}}{J^{2}}\right)\psi\left(\frac{\mcL(\xi,v)}{\d}\right)\|_{\mcM^{p}} & \le C_{d,p}\sup_{\xi\in\R^{d}}\tilde{\vp}\left(\frac{|\xi|^{2}}{J^{2}}\right)\tilde{\psi}\left(\frac{\mcL(\xi,v)}{\d}\right),
\end{align*}
where $\tilde{\vp}$, $\tilde{\psi}$ are bump functions as above. Hence, 
\begin{align*}
\|\vp\left(\frac{|\xi|^{2}}{J^{2}}\right)\psi\left(\frac{\mcL(\xi,v)}{\d}\right)\|_{\mcM^{p}} & \le C_{d,p}\sup_{J\le|\xi|\le2J}\tilde{\psi}\left(\frac{\mcL(\xi,v)}{\d}\right).
\end{align*}

Hence,
\begin{align*}
 & \Big\|\|\vp\left(\frac{|\xi|^{2}}{J^{2}}\right)\psi\left(\frac{\mcL(\xi,v)}{\d}\right)\|_{\mcM^{p}}\Big\|_{L_{v}^{r}}\lesssim\left(\int\sup_{J\le|\xi|\le2J}\tilde{\psi}\left(\frac{\mcL(\xi,v)}{\d}\right)\,dv\right)^{\frac{1}{r}}\\
 & \lesssim\left(\int\sup_{J\le|\xi|\le2J}1_{\frac{|\xi|^{2}b(v)}{\d}\in\supp\tilde{\psi}}\,dv\right)^{\frac{1}{r}}\lesssim\left(\int1_{\frac{|J|^{2}b(v)}{\d}\in\supp\tilde{\psi}}\,dv\right)^{\frac{1}{r}}\\
 & \lesssim\left(|\{v\in\R:\,\frac{|J|^{2}b(v)}{\d}\in\supp\tilde{\psi}|\}\right)^{\frac{1}{r}}=|\Omega_{\mcL}(J,\d)|^{\frac{1}{r}}.
\end{align*}

\end{proof}

\subsection{Averaging Lemma }

Working with the isotropic truncation property allows to prove a similar statement to Lemma \ref{lem:av}, but without the restriction to $p\le2$. This leads to an improved estimate on the integrability of the solution. 
\begin{lem}
\label{lem:av-iso}

Let $f\in L_{v}^{r'}(L_{t,x}^{p})$ for $1<p<\infty$, $r'\in(1,\infty]$ solve, in the sense of distributions,
\begin{equation}
\mcL(\partial_{t},\nabla_{x},v)f(t,x,v)=\D_{x}^{\frac{\eta}{2}}g_{0}(t,x,v)+\partial_{v}\D_{x}^{\frac{\eta_{}}{2}}g_{1}(t,x,v)\text{ on }\R_{t}\times\R_{x}^{d}\times\R_{v}\label{eq:eqn-1}
\end{equation}
with $g_{i}$ being Radon measures satisfying 
\begin{equation}
|g_{0}|(t,x,v)+|g_{1}|(t,x,v)|v|^{-\gamma}\in\begin{cases}
L^{q}(\R_{t}\times\R_{x}^{d}\times\R_{v}), & 1<q\le2\\
\mcM_{TV}(\R_{t}\times\R_{x}^{d}\times\R_{v}), & q=1,
\end{cases}\label{eq:measure_bound-1}
\end{equation}
for some $\gamma\ge0$, $\eta\ge0$, $1\le q\le\min(p,2)$ and $\mcL(\partial_{t},\nabla_{x},v)$ as in \eqref{eq:kinetic_isotropic} with corresponding symbol $\mcL(i\tau,i\xi,v)=\mcL_{hyp}(i\tau,i\xi,v)+\mcL_{par}(\xi,v)$. Let $I\subseteq\R$ be a not necessarily bounded interval, set 
\[
\o_{\mcL}(J;\d):=\sup_{\tau\in\R,\,\xi\in\R^{d},|\xi|\sim J}|\Omega_{\mcL}(\tau,\xi;\d)|,\quad\Omega_{\mcL}(\tau,\xi;\d)=\{v\in I:\,|\mcL(i\tau,i\xi,v)|\le\d\},
\]
and suppose that the following non-degeneracy condition holds: There exist $\a,\b>0$ such that 
\begin{equation}
\o_{\mcL}(J;\d)\lesssim(\frac{\d}{J^{\b}})^{\a}\quad\forall\d\ge1,\ J\ge1.\label{eq:cdt1-iso}
\end{equation}
Moreover, assume that there exist $\l\ge0$ and $\mu\in[0,1]$ such that, for all $\d\ge1,$ $J\ge1$, 
\begin{equation}
\sup_{\tau,|\xi|\sim J}\sup_{v\in\Omega_{\mcL}(\tau,\xi;\d)}|\partial_{v}\mcL(i\tau,i\xi,v)||v|^{\gamma}\lesssim J^{\l}\d^{\mu}\label{eq:cdt2-iso}
\end{equation}
and $\frac{\a\b}{q'}\le\l+\eta$. Assume that $\mcL_{par}$ satisfies the isotropic truncation property with
\begin{equation}
|\Omega_{\mcL_{par}}(J,\d)|\lesssim(\frac{\d}{J^{\b}})^{\a}\quad\forall\d\ge1,\ J\ge1.\label{eq:cdt3-iso}
\end{equation}
Then, for all $\phi\in C_{b}^{\infty}(I)$, $s\in[0,s^{*})$, $\td p\in[1,p^{*})$, $T\ge0$, $\mcO\subset\subset\R^{d}$, there is a constant $C\ge0$ such that
\begin{align}
\|\int f(t,x,v)\phi(v) & \,dv\|_{L^{\td p}([0,T];\dot{W}^{s,\td p}(\mcO))}\le C\big(\|g_{0}\phi\|_{L_{t,x,v}^{q}}+\||v|^{-\gamma}g_{1}\phi\|_{L_{t,x,v}^{q}}\label{eq:iso_ineq}\\
 & +\|g_{1}\phi'\|_{L_{t,x,v}^{q}}+\|f\phi\|_{L_{v}^{r'}(L_{t,x}^{p})}+\|f\phi\|_{L_{t,x}^{q}L_{v}^{1}}+\|f\phi\|_{L_{t}^{\td p}L_{x,v}^{1}}\big)\nonumber 
\end{align}
with $s^{*}:=(1-\t)\frac{\a\b}{r}+\t(\frac{\a\b}{q'}-\l-\eta),$ where $\t=\theta_{\a}$ and $p^{*}$ are given by
\[
\t:=\frac{\frac{\a}{r}}{\a(\frac{1}{r}-\frac{1}{q'})+1}\in(0,1),\quad\frac{1}{p^{*}}:=\frac{1-\t}{p}+\frac{\t}{q},\ \frac{1}{r}+\frac{1}{r'}=1.
\]
An analogous estimate can be given for inhomogeneous Sobolev spaces.
\end{lem}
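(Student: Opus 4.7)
The plan is to mirror the proof of Lemma \ref{lem:av} step by step, making only the modification needed to exploit the isotropic truncation property, which in turn removes the restriction $p\le 2$. I would start with the identical preliminary reduction to $f$ with $x$-Fourier transform supported outside $B_{1}(0)$ (the low-mode piece being controlled trivially by $\|f\phi\|_{L^{\tilde p}_{t,x}L^{1}_{v}}$), followed by the same micro-local decomposition $f = f^{0} + f^{2} + f^{3}$ via the cut-offs $\psi_{0}$, $\tilde\psi_{1}\bigl(\mcL(\partial_{t},\nabla_{x},v)/(\delta 2^{k})\bigr)$, and the $x$-Littlewood--Paley localization to frequencies $|\xi|\sim 2^{j}$. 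The equation \eqref{eq:eqn-1} is used verbatim as in \eqref{eq:eqn_fK-1-1} to convert the action on $g_{0}$, $\partial_{v}g_{1}$ into multiplier actions on $g_{0,j}$, $g_{1,j}$.

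The crucial deviation occurs at the step corresponding to Step~1 of Lemma \ref{lem:av}, where one bounds $\|\int f_{j}^{0}\phi\,dv\|_{L^{p}_{t,x}}$. Instead of invoking Lemma \ref{lem:bsc_est} (which produces a factor $\|f_{j}\phi\|_{L^{p}_{t,x}(H^{\sigma,p}_{v})}$ and thus forces $p\le 2$), I would argue as follows: by Minkowski's inequality pulling the $v$-integral outside $L^{p}_{t,x}$, then applying the pointwise-in-$v$ $L^{p}_{x}$-multiplier estimate delivered by the isotropic truncation property of $\mcL_{par}$, and finally H\"older in $v$ pairing $\bigl\|\|M_{\psi,J}(\cdot,v)\|_{\mcM^{p}}\bigr\|_{L^{r}_{v}}$ against $\|f_{j}\phi\|_{L^{r'}_{v}(L^{p}_{t,x})}$, one obtains
\[
\Bigl\|\int f_{j}^{0}\phi\,dv\Bigr\|_{L^{p}_{t,x}}\lesssim |\Omega_{\mcL_{par}}(2^{j},\delta)|^{\frac{1}{r}}\,\|f_{j}\phi\|_{L^{r'}_{v}(L^{p}_{t,x})}\lesssim\Bigl(\frac{\delta}{(2^{j})^{\beta}}\Bigr)^{\frac{\alpha}{r}}\|f\phi\|_{L^{r'}_{v}(L^{p}_{t,x})},
\]
valid for every $1<p<\infty$ thanks to H\"ormander--Mihlin. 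The same H\"older-in-$v$ trick applied to the multipliers in Steps~2 and~3 yields the analogous estimates on $\int f_{j}^{2}\phi\,dv$ and $\int f_{j}^{3}\phi\,dv$ in $L^{q}_{t,x}$, with the constants $\delta^{\frac{\alpha}{q'}-1}$ and $\delta^{-1+\frac{\alpha}{q'}+\mu}$ respectively, exactly as before.

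The concluding bookkeeping is unchanged: summing the $x$-LP pieces gives $\bar f^{0}\in\tilde L^{p}_{t}B^{\frac{\alpha\beta}{r}}_{p,\infty}$ and $\bar f^{1}\in\tilde L^{q}_{t}B^{\frac{\alpha\beta}{q'}-\lambda-\eta}_{q,\infty}$ with the stated $\delta$-dependent prefactors, and real $K$-interpolation with the equilibration $\delta^{\frac{\alpha}{q'}-1}=z\delta^{\frac{\alpha}{r}}$, split of the $K$-integral over $(0,1)$ and $(1,\infty)$ using the trivial bound $K(z,\bar f)\le\|f\phi\|_{L^{q}_{t,x}L^{1}_{v}}$ for $z\ge 1$, and the embeddings $\tilde L^{r}B^{s}_{r,\infty}\hookrightarrow L^{r}B^{s-\varepsilon}_{r,1}\hookrightarrow L^{r}W^{s-2\varepsilon,r}$ produce \eqref{eq:iso_ineq} with the same $s^{*}$ and $p^{*}$ as in Lemma \ref{lem:av}.

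The main technical obstacle I expect is the justification that the isotropic truncation property of $\mcL_{par}$ (an isotropic symbol) controls the multiplier $\psi(\mcL/\delta)$, whose symbol is $i\tau + ia(v)\cdot\xi + b(v)|\xi|^{2}$ and hence \emph{not} isotropic due to the imaginary part. The reconciliation rests on two observations: first, the purely imaginary terms $i\tau + ia(v)\cdot\xi$ contribute no $\xi$-derivatives that cannot be absorbed by the H\"ormander--Mihlin computation on $\vp(|\xi|^{2}/J^{2})$ carried out in the example preceding the lemma, so $\|\psi(\mcL/\delta)\vp(|\xi|^{2}/J^{2})\|_{\mcM^{p}}$ is still controlled by $\sup_{|\xi|\sim J}|\tilde\psi(\mcL(i\tau,i\xi,v)/\delta)|$; second, $|\mcL(i\tau,i\xi,v)|\ge \mcL_{par}(\xi,v)$ gives $\Omega_{\mcL}(\tau,\xi,\delta)\subseteq\Omega_{\mcL_{par}}(\xi,\delta)$, so the $L^{r}_{v}$-integration of the multiplier norm delivers $|\Omega_{\mcL_{par}}(J,\delta)|^{1/r}$ uniformly in $\tau$. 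Once this uniform-in-$\tau$ bound is in hand, the $\mcF_{t,x}^{-1}$ on the time frequency is harmless and the proof closes.
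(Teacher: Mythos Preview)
Your overall plan is correct and matches the paper: only Step~1 (the estimate of $\bar f^{0}$) changes, and Steps~2, 3 together with the real-interpolation bookkeeping carry over verbatim from Lemma~\ref{lem:av}. One small correction: Steps~2 and~3 should not be redone via your ``H\"older-in-$v$ trick'' with exponents $(r,r')$; they are run exactly as before with Lemma~\ref{lem:bsc_est} at the exponent $q\le 2$, which is why the $g_i$-terms on the right-hand side appear in $L^{q}_{t,x,v}$.

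The genuine gap is in your resolution of the obstacle you correctly identified. Your ``first observation'' --- that the H\"ormander--Mihlin computation from the Example goes through for the full symbol $\psi_{0}(\mcL/\delta)\vp_{1}(|\xi|^{2}/J^{2})$ because the imaginary terms contribute only absorbable $\xi$-derivatives --- fails when $a\not\equiv 0$. Indeed $\partial_{\xi_{i}}\psi_{0}(\mcL/\delta)$ produces a term $\psi_{0}'(\mcL/\delta)\,ia_{i}(v)/\delta$, and nothing on the support forces $|a_{i}(v)|/\delta\lesssim |\xi|^{-1}$; the constraint $|\tau+a(v)\cdot\xi|\lesssim\delta$ controls only the combination, not $|a(v)|$ itself. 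So the pointwise Mihlin bound you claim for the multiplier norm does not follow.

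The paper fixes this by a structural choice rather than a derivative estimate: one takes $\psi_{0}$ of product form $\psi_{0}(ia+b)=\psi_{0}^{1}(a)\psi_{0}^{2}(b)$, so that
\[
\psi_{0}\!\left(\frac{\mcL}{\delta}\right)=\psi_{0}^{1}\!\left(\frac{\mcL_{hyp}}{\delta}\right)\psi_{0}^{2}\!\left(\frac{\mcL_{par}}{\delta}\right).
\]
The first factor is a uniformly bounded $L^{p}_{t,x}$-multiplier by the (assumed) truncation property of $\mcL$, and the second is precisely the object to which the isotropic truncation property of $\mcL_{par}$ with \eqref{eq:cdt3-iso} applies. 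This yields $\bigl\|\|\vp_{1}(\xi/2^{j})\psi_{0}(\mcL/\delta)\|_{\mcM^{p}}\bigr\|_{L^{r}_{v}}\lesssim(\delta/2^{j\beta})^{\alpha/r}$, after which your Minkowski/H\"older argument in $v$ goes through for all $1<p<\infty$. Note that your ``second observation'' ($\Omega_{\mcL}\subseteq\Omega_{\mcL_{par}}$), combined with the truncation-property hypothesis for the \emph{uniform} multiplier bound (rather than H\"ormander--Mihlin), would also close the argument; but as written you invoke H\"ormander--Mihlin for the boundedness, which is the step that breaks.
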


\begin{proof}
The proof proceeds analogously to the one of Lemma \ref{lem:av}. The only change appears in the estimation of $f^{0}.$ We may assume that $\psi_{0}$ is of the form $\psi_{0}(ia+b)=\psi_{0}^{1}(a)\psi_{0}^{2}(b)$ with $\psi_{0}^{i}$ being locally supported bump functions. Hence,
\[
\psi_{0}\left(\frac{\mcL(i\tau,i\xi,v)}{\d}\right)=\psi_{0}^{1}\left(\frac{\mcL_{hyp}(i\tau,i\xi,v)}{\d}\right)\psi_{0}^{2}\left(\frac{\mcL_{par}(\xi,v)}{\d}\right)
\]
and 
\[
\|\vp_{1}(\frac{\xi}{2^{j}})\psi_{0}\left(\frac{\mcL(i\tau,i\xi,v)}{\d}\right)\|_{\mcM^{p}}\lesssim\|\vp_{1}(\frac{\xi}{2^{j}})\psi_{0}^{2}\left(\frac{\mcL_{par}(\xi,v)}{\d}\right)\|_{\mcM^{p}}.
\]
The isotropic truncation property and \eqref{eq:cdt3-iso} then imply
\[
\Big\|\|\vp_{1}(\frac{\xi}{2^{j}})\psi_{0}\left(\frac{\mcL(i\tau,i\xi,v)}{\d}\right)\|_{\mcM^{p}}\Big\|_{L_{v}^{r}}\lesssim|\Omega_{\mcL_{par}}(2^{j},\d)|^{\frac{1}{r}}\lesssim(\frac{\d}{2^{j\b}})^{\frac{\a}{r}}.
\]

Hence,
\begin{align*}
\|\int f_{j}^{0}\phi\,dv\|_{L_{t,x}^{p}} & =\|\int\mcF_{t,x}^{-1}\vp_{1}(\frac{\xi}{2^{j}})\psi_{0}\left(\frac{\mcL(i\tau,i\xi,v)}{\d}\right)\mcF_{t,x}f^{0}\phi\,dv\|_{L_{t,x}^{p}}\\
 & \le\int\|\mcF_{t,x}^{-1}\vp_{1}(\frac{\xi}{2^{j}})\psi_{0}\left(\frac{\mcL(i\tau,i\xi,v)}{\d}\right)\mcF_{t,x}f^{0}\phi\|_{L_{t,x}^{p}}\,dv\\
 & \lesssim\int\|\mcF_{t,x}^{-1}\vp_{1}(\frac{\xi}{2^{j}})\psi_{0}\left(\frac{\mcL(i\tau,i\xi,v)}{\d}\right)\mcF_{t,x}\|_{\mcM^{p}}\|f^{0}\phi\|_{L_{t,x}^{p}}\,dv\\
 & \le\Big\|\|\vp_{1}(\frac{\xi}{2^{j}})\psi_{0}\left(\frac{\mcL(i\tau,i\xi,v)}{\d}\right)\|_{\mcM^{p}}\Big\|_{L_{v}^{r}}\|f^{0}\phi\|_{L_{v}^{r'}L_{t,x}^{p}}\\
 & \lesssim(\frac{\d}{2^{j\b}})^{\frac{\a}{r}}\|f^{0}\phi\|_{L_{v}^{r'}L_{t,x}^{p}}.
\end{align*}
The proof then proceeds as before, the only difference being that we do not have to restrict to $1<p\le2$ and the modified definition of $r,r'$.
\end{proof}

\subsection{Porous media equations}

In this section we consider porous media equations with a source of the type
\begin{align}
\partial_{t}u & =\D u^{[m]}+S(t,x)\text{ on }(0,T)\times\R_{x}^{d},\label{eq:PME-inhomo}\\
u(0) & =u_{0},\nonumber 
\end{align}
where $u_{0}\in L^{1}(\R_{x}^{d}),\ S\in L^{1}([0,T]\times\R_{x}^{d})$, $T\ge0$ and $m>1$. 

As in \cite{CP03}, the kinetic form to \eqref{eq:PME-inhomo} reads, with $f=\chi(u(t,x),v),$ $q\in\mcM^{+}$,
\begin{align}
\partial_{t}f & =m|v|^{m-1}\Delta f+\partial_{v}q+S(t,x)\d_{u(t,x)}(v)\text{ on }(0,T)\times\R_{x}^{d}\times\R_{v}.\label{eq:kinetic_ph-2-1}
\end{align}
For the notion and well-posedness of entropy solutions to \eqref{eq:PME-inhomo} see Appendix \ref{app:kin_solutions}. As before, let $\mcL(\partial_{t},\nabla_{x},v)f=\partial_{t}f-m|v|^{m-1}\Delta f$ with symbol 
\begin{align*}
\mcL(i\tau,\xi,v):= & \mcL_{hyp}(i\tau)+\mcL_{par}(\xi,v)\\
:= & i\tau-m|v|^{m-1}|\xi|^{2}.
\end{align*}

\begin{thm}
\label{thm:pme}Let $u_{0}\in(L^{1}\cap L^{1+\ve})(\R_{x}^{d})$, $S\in(L^{1}\cap L^{1+\ve})([0,T]\times\R_{x}^{d})$ for some $\ve>0$. Let $u$ be the unique entropy solution to \eqref{eq:PME-inhomo}. Then, for all
\[
s\in[0,\frac{2}{m}),\quad p\in[1,m)
\]
  we have
\[
u\in L^{p}([0,T];\dot{W}_{loc}^{s,p}(\R_{x}^{d})).
\]
In addition, for all $\mcO\subset\subset\R^{d}$ there is a constant $C=C(m,p,s,\ve,T,\mcO)$ such that
\[
\|u\|_{L^{p}([0,T];\dot{W}^{s,p}(\mcO))}\le C\left(\|u_{0}\|_{L_{x}^{1}\cap L_{x}^{1+\ve}}^{2}+\|S\|_{L_{t,x}^{1}\cap L_{t,x}^{1+\ve}}^{2}+1\right).
\]
\end{thm}

\begin{proof}
Let $s\in[0,\frac{2}{m}),\,p\in[1,m)$. We have $f\in L_{t,x,v}^{1}\cap L_{t,x,v}^{\infty}$ and thus $f\in L_{v}^{\td p}(L_{t,x}^{\td p})$ for all $\td p\ge1$ with 
\begin{equation}
\|f\|_{L_{v}^{\td p}(L_{t,x}^{\td p})}^{\td p}\le\|f\|_{L_{v}^{1}(L_{t,x}^{1})}.\label{eq:f_lp_bound}
\end{equation}
This bound will replace the property $f\in L_{t,x;loc}^{2}(H_{v}^{\s,2})$ used in the proof of Corollary \ref{cor:par-hyp-av}, which is possible due to Lemma \ref{lem:av-iso}. As a consequence, the localization of $f$ performed in Corollary \ref{cor:par-hyp-av} is not required here. In order to apply \eqref{lem:av-iso} we need to extend \eqref{eq:kinetic_ph-2-1} to all time $t\in\R$, which can be done by multiplication with a smooth cut-off function $\vp\in C_{c}^{\infty}(0,T)$. Let $\eta=0$, $\a=\frac{1}{m-1}$, $\b=2$ and choose $\gamma\in[0,1)$ large enough and $r\ge1$ small enough such that $\l=2-2\frac{m-2+\gamma}{m-1}=2(\frac{1-\gamma}{m-1})$ is such that 
\begin{align*}
(1-\t)\b\frac{\a}{r}-\t(\l+\eta) & =\t(\frac{\b}{r}-\l)\\
 & =\frac{2}{m}\left(\frac{1}{r}-(\frac{1-\gamma}{m-1})\right)\\
 & >s,
\end{align*}
where $\t=\frac{1}{m}$. Next, choose $\td p$ large enough, such that $p^{*}=m\left(\frac{\td p}{m-1+\td p}\right)>p$ and note $\frac{1-\t}{\td p}+\t=\frac{1}{p^{*}}$. We can choose $\td p$, $r$ such that $\td p=r'$. Let $g_{0}=\delta_{v=u(t,x)}S+f\partial_{t}\vp$, $g_{1}=q$. In order to treat the possible singularity of $\partial_{v}\mcL$ at $v=0$ we proceed as in Corollary \ref{cor:par-hyp-av}, i.e.~first cutting out the singularity, then controlling the respective error uniformly by Lemma \ref{lem:ph-est-1}. Note that $\mcL$ satisfies \eqref{eq:cdt1-iso}, \eqref{eq:cdt2-iso} on $\R\setminus{\{0\}}$ for all $\gamma\in[0,1)$ and $\mcL_{par}$ satisfies the isotropic truncation property with \eqref{eq:cdt3-iso}. With these choices, Lemma \ref{lem:av-iso} with $p=\td p$, $q=1$ and $\phi\equiv1$ yields 
\begin{align*}
\|u\|_{L^{p}([0,T];\dot{W}^{s,p}(\mcO))} & \lesssim\|\delta_{v=u(t,x)}S\|_{\mcM_{TV}}+\|f_{0}\|_{L_{x}^{1}L_{v}^{1}}+\||v|^{-\gamma}q\|_{\mcM_{TV}}\\
 & +\|f\|_{L_{v}^{\td p}(L_{t,x}^{\td p})}+\|f\|_{L_{t,x}^{1}L_{v}^{1}}+\|f\|_{L_{t,x}^{p}L_{v}^{1}}\\
 & \lesssim\|S\|_{L_{t,x}^{1}}+\|u_{0}\|_{L_{x}^{1}}+\||v|^{-\gamma}q\|_{\mcM_{TV}}+\|f\|_{L_{t,x}^{1}L_{v}^{1}}+\|f\|_{L_{t}^{p}L_{x,v}^{1}}+1.
\end{align*}
The fact that, for all $\eta\in[1,\infty)$,
\begin{align*}
\|f\|_{L_{t,x}^{\eta}L_{v}^{1}} & =\|u\|_{L_{t,x}^{\eta}}\lesssim\|u_{0}\|_{L_{x}^{\eta}}+\|S\|_{L_{t,x}^{\eta}},\\
\|f\|_{L_{t}^{\eta}L_{x,v}^{1}} & =\|u\|_{L_{t}^{\eta}L_{x}^{1}}\lesssim\|u_{0}\|_{L_{x}^{1}}+\|S\|_{L_{t,x}^{1}}
\end{align*}
and Lemma \ref{lem:ph-est} thus imply
\begin{align*}
\|u\|_{L^{p}([0,T];\dot{W}^{s,p}(\mcO))} & \lesssim\|u_{0}\|_{L_{x}^{1}\cap L_{x}^{2-\g}}^{2}+\|S\|_{L_{t,x}^{1}\cap L_{t,x}^{2-\g}}^{2}+1.
\end{align*}
Since $p^{*}>p$, choosing $\g\in(0,1)$ large enough so that $2-\g\le1+\ve$ yields the claim.
\end{proof}

\begin{rem}
We note that for $u_{0}\in L_{x}^{1}$ or $S\in L_{t,x}^{1}$ the kinetic measure $q$ does not necessarily have finite mass (cf.~e.g.~\cite{P02}). Therefore, in the literature the cut-off $\phi\in C_{c}^{\infty}(\R)$ in \eqref{eq:iso_ineq} is required to be compactly supported, which prevents to deduce regularity estimates for $u$ itself, unless $u$ is bounded. Our arguments allow to avoid this restriction since we work with the singular moments $|v|^{-\g}q$ only, which are shown to be finite in Lemma \ref{lem:ph-est}, provided $u\in L_{x}^{2-\g}$, $S\in L_{t,x}^{2-\g}$.
\end{rem}

\begin{rem}
\label{rem:eb-technique}

As it has been pointed out in the introduction, the results obtained in \cite{E05} are restricted to fractional differentiability of an order less than one. This restriction is inherent to the method used in \cite{E05}. More precisely, the estimates obtained in \cite{E05} are (informally) based on testing \eqref{eq:PME-inhomo} with $\int_{0}^{t}\D u^{[m]}\,dr$, integrating in space and time and using Hölder's inequality, which leads to the energy inequality (neglecting constants) 
\begin{align}
 & \int_{0}^{T}\int(\nabla u{}^{[\frac{m+1}{2}]})^{2}dxdr\le\int u^{2}(0)dx.\label{eq:ebmeyer_energy_ineq}
\end{align}
The regularity estimates are then deduced from \eqref{eq:ebmeyer_energy_ineq} alone. In \cite{E05} these formal computations are made rigorous, a careful treatment of boundary conditions is given and the bound on $\int_{0}^{T}\int(\nabla u{}^{[\frac{m+1}{2}]})^{2}dxdr$ is used to prove \eqref{eq:eb-reg-1}. Since \eqref{eq:ebmeyer_energy_ineq} only involves derivatives of first order, it does not seem possible to deduce higher than first order differentiability from this.
\end{rem}

\section{Degenerate parabolic Anderson model\label{sec:Degenerate-parabolic-Anderson}}

We consider the degenerate parabolic Anderson model
\begin{align}
\partial_{t}u & =\partial_{xx}u^{[m]}+u\,S\text{ on }(0,T)\times I,\label{eq:anderson}\\
u^{\ve} & =0\text{ on }\partial I,\nonumber 
\end{align}
with $m\in(1,2)$, $I\subseteq\R$ a bounded, open interval and $S$ being a distribution only. As for the parabolic Anderson model (cf.~\cite{GM98,GM90}), the particular example we have in mind is $S=\xi$ being spatial white noise. Accordingly, we assume that, locally on $\R$, 
\begin{equation}
S\in B_{\infty,\infty}^{-\frac{1}{2}-\ve}\text{ for all }\ve>0.\label{eq:distr_ass}
\end{equation}
The choice of zero Dirichlet boundary data in \eqref{eq:anderson} is for simplicity only and the arguments of this section can easily be adapted to the Cauchy problem. 

We define weak solutions to \eqref{eq:anderson} to be functions $u\in L^{2}([0,T];H_{0}^{1}(I))$ such that $u^{[m]}\in L^{2}([0,T];H_{0}^{1}(I))$ and \eqref{eq:anderson} is satisfied in the sense of distributions. We will prove the following regularity estimate for a weak solution to \eqref{eq:anderson}.
\begin{cor}
\label{cor:reg_weak}Let $u_{0}\in L^{m+1}(I).$ Then there exists a weak solution $u$ to \eqref{eq:anderson} satisfying, for all $p\in[1,m)$, $s\in[0,\frac{3}{2}\frac{1}{m})$, 
\[
u\in L{}^{p}([0,T];W_{loc}^{s,p}(I)),
\]
with, for all $T\ge0$, $\mcO\subset\subset I$, 
\begin{align*}
\|u\|_{L^{p}([0,T];W^{s,p}(\mcO))} & \lesssim\|u_{0}\|_{L^{m+1}(I)}^{m+1}+\|S\|_{B_{\infty,\infty}^{-\eta}}^{\tau}+1,
\end{align*}
for some $\tau\ge2$ and $\eta\in(\frac{1}{2},1]$ small enough.
\end{cor}

The proof of the above Proposition is a consequence of establishing according uniform regularity estimates (see Theorem \ref{thm:approx_anderson} below) for the approximating problem
\begin{align}
\partial_{t}u^{\ve} & =\partial_{xx}(u^{\ve})^{[m]}+u^{\ve}S^{\ve}(x)\text{ on }(0,T)\times I,\label{eq:deg_Anderson_approx}\\
u^{\ve} & =0\text{ on }\partial I,\nonumber 
\end{align}
where $S^{\ve}\in C^{\infty}(\R)$ with $\|S^{\ve}\|_{B_{\infty,\infty}^{-\frac{1}{2}-\ve}}\le\|S\|_{B_{\infty,\infty}^{-\frac{1}{2}-\ve}}$ and $S^{\ve}\to S$ locally in $B_{\infty,\infty}^{-\frac{1}{2}-\ve}$ for all $\ve>0$. These estimates will be derived from the kinetic formulation of \eqref{eq:deg_Anderson_approx}. Informally, with $\chi^{\ve}:=\chi(u^{\ve})$ the kinetic form reads, in the sense of distributions,
\begin{align}
\partial_{t}\chi^{\ve} & =m|v|^{m-1}\partial_{xx}\chi^{\ve}+\d_{u^{\ve}(t,x)=v}u^{\ve}S^{\ve}+\partial_{v}q^{\ve}\nonumber \\
 & =m|v|^{m-1}\partial_{xx}\chi^{\ve}+\chi^{\ve}S^{\ve}+\partial_{v}q^{\ve}-\partial_{v}(\chi^{\ve}vS^{\ve})\text{ on }(0,T)\times I\times\R.\label{eq:kinetic_anderson}
\end{align}

\begin{defn}
\label{def:kinetic_sol}We say that $u^{\ve}\in L^{1}([0,T]\times I)$ is an entropy solution to \eqref{eq:deg_Anderson_approx} if 

\begin{enumerate}
\item [(i)] for every $\a\in(0,m]$ there is a constant $K_{1}\ge0$ such that
\begin{equation}
\|\partial_{x}(u^{\ve}){}^{[\frac{m+\a}{2}]}\|_{L^{2}([0,T]\times I)}\le K_{1}.\label{eq:kinetic_1}
\end{equation}
\item [(ii)] $\chi^{\ve}=\chi(u^{\ve})$ satisfies \eqref{eq:kinetic_anderson}, in the sense of distributions on $(0,T)\times I\times\R$, for some non-negative, finite measure $q^{\ve}$ such that,
\[
q^{\ve}=m^{\ve}+n^{\ve}
\]
with $m^{\ve}$ being a non-negative measure and $n^{\ve}$ given by
\[
n^{\ve}=\d_{v=u^{\ve}}(\partial_{x}(u^{\ve}){}^{[\frac{m+1}{2}]})^{2}
\]
and satisfying, for every $\a\in(0,m]$ with $K_{1}$ as in (i),
\begin{equation}
\int_{[0,T]\times\R^{d}\times\R}|v|^{\a-1}q^{\ve}\,dtdxdv\le K_{1}.\label{eq:kinetic_bound}
\end{equation}
 
\end{enumerate}
\end{defn}

The well-posedness of entropy solutions to \eqref{eq:deg_Anderson_approx} follows along the lines of Theorem \ref{thm:wp-kinetic} in Appendix \ref{app:kin_solutions} below. It only remains to show that the constant $K_{1}$ in \eqref{eq:kinetic_1} and \eqref{eq:kinetic_bound} can be chosen uniformly in $\ve$.
\begin{lem}
\label{lem:anders_bound}Let $\a>0$, $\tau=\frac{2\a+2}{2\a+3-m}\in(1,2]$ and $u_{0}\in(L^{m+1}\cap L^{\a+1})(\R_{x}^{d})$. Then, for some constant $C=C(\a,m,T)$, 
\begin{align*}
\sup_{t\in[0,T]}\int_{I}|u^{\ve}(t)|^{\a+1}dx & +\int_{0}^{T}\int_{I}(\partial_{x}(u^{\ve}){}^{[\frac{m+\a}{2}]})^{2}dxdr\le C\int_{I}|u_{0}|^{\a+1}dx+C\|S\|_{W^{-1,\tau'}}^{\tau'}.
\end{align*}
and 
\begin{align}
 & \int_{[0,T]\times\bar{I}\times\R}|v|^{\a-1}q^{\ve}\,drdxdv\le C\int_{I}|u_{0}|^{\a+1}dx+C\|S\|_{W^{-1,\tau'}}^{\tau'}.\label{eq:kin_meas_est}
\end{align}
\end{lem}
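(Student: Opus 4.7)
I would derive both estimates from the entropy identity obtained by testing \eqref{eq:deg_Anderson_approx} against the entropy derivative $\eta'(u^\varepsilon)$, with $\eta$ chosen so that $\eta''(v)=|v|^{\alpha-1}$, i.e.\ $\eta(v)=|v|^{\alpha+1}/(\alpha(\alpha+1))$. For $\alpha<1$ the singularity of $\eta''$ at the origin is handled by the standard regularization $\eta_\kappa''(v)=(|v|^2+\kappa)^{(\alpha-1)/2}$ with $\kappa\downarrow 0$ and Fatou, exactly as in the proof of Lemma \ref{lem:ph-est}. The Dirichlet condition $u^\varepsilon|_{\partial I}=0$ together with $\eta'(0)=0$ eliminates the boundary contributions from the diffusion after integration by parts, and the chain-rule identity $m|u^\varepsilon|^{m+\alpha-2}(\partial_x u^\varepsilon)^2=\tfrac{4m}{(m+\alpha)^2}\bigl(\partial_x(u^\varepsilon)^{[(m+\alpha)/2]}\bigr)^2$ yields
\begin{align*}
\frac{d}{dt}\int_I \eta(u^\varepsilon)\,dx + \frac{4m}{(m+\alpha)^2}\int_I\bigl(\partial_x(u^\varepsilon)^{[(m+\alpha)/2]}\bigr)^2\,dx = \frac{1}{\alpha}\int_I |u^\varepsilon|^{\alpha+1}\,S^\varepsilon\,dx.
\end{align*}

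The crucial step is to control the forcing by duality, exploiting that $|u^\varepsilon|^{\alpha+1}\in W^{1,\tau}_{0}(I)$ since $u^\varepsilon$ vanishes on $\partial I$. Combining the chain-rule identity $\partial_x|u^\varepsilon|^{\alpha+1}=\tfrac{2(\alpha+1)}{m+\alpha}(u^\varepsilon)^{[(\alpha-m+2)/2]}\partial_x(u^\varepsilon)^{[(m+\alpha)/2]}$ with H\"older applied to the exponent pair $\bigl(2,\tfrac{2(\alpha+1)}{\alpha-m+2}\bigr)$ gives
\begin{align*}
\bigl\|\partial_x|u^\varepsilon|^{\alpha+1}\bigr\|_{L^\tau(I)} \lesssim \|u^\varepsilon\|_{L^{\alpha+1}(I)}^{(\alpha-m+2)/2}\bigl\|\partial_x(u^\varepsilon)^{[(m+\alpha)/2]}\bigr\|_{L^2(I)},
\end{align*}
where the balance condition $\tfrac12+\tfrac{\alpha-m+2}{2(\alpha+1)}=\tfrac1\tau$ dictates precisely the $\tau=\tfrac{2(\alpha+1)}{2\alpha+3-m}$ of the statement and hence $\tau'=\tfrac{2(\alpha+1)}{m-1}$. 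The three-exponent Young inequality corresponding to $\tfrac1{\tau'}+\tfrac{\alpha-m+2}{2(\alpha+1)}+\tfrac12=1$ then splits the forcing as
\begin{align*}
\|S^\varepsilon\|_{W^{-1,\tau'}}\|u^\varepsilon\|_{L^{\alpha+1}}^{(\alpha-m+2)/2}\bigl\|\partial_x(u^\varepsilon)^{[(m+\alpha)/2]}\bigr\|_{L^2} \le \delta\bigl\|\partial_x(u^\varepsilon)^{[(m+\alpha)/2]}\bigr\|_{L^2}^2 + C_\delta\|S^\varepsilon\|_{W^{-1,\tau'}}^{\tau'} + C_\delta\|u^\varepsilon\|_{L^{\alpha+1}}^{\alpha+1}.
\end{align*}
Absorbing the $\delta$-term into the LHS of the energy identity, using Poincar\'e in 1D to bound the $L^\tau$-part of the $W^{1,\tau}_{0}$-norm by the gradient, and invoking Gr\"onwall on $y(t):=\int_I|u^\varepsilon(t)|^{\alpha+1}dx$ delivers the first stated inequality with constants uniform in $\varepsilon$.

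For \eqref{eq:kin_meas_est} I would repeat the test against $\eta'(v)$ directly on the kinetic form \eqref{eq:kinetic_anderson}. The distributional term $\partial_v q^\varepsilon$ contributes $-\iint|v|^{\alpha-1}q^\varepsilon\,dv\,dx$, while $-\partial_v(\chi^\varepsilon v S^\varepsilon)$ contributes $S^\varepsilon\hat\eta(u^\varepsilon)$ with $\hat\eta(v):=\int_0^v s\eta''(s)\,ds=|v|^{\alpha+1}/(\alpha+1)$. The integration-by-parts identity $\hat\eta(u)+\eta(u)=u\eta'(u)$ shows that the sum of the $\chi^\varepsilon S^\varepsilon$ and the $-\partial_v(\chi^\varepsilon v S^\varepsilon)$ contributions collapses to the same $\tfrac{1}{\alpha}|u^\varepsilon|^{\alpha+1}S^\varepsilon$ appearing in the PDE identity, whence (the $\partial_{xx}$-boundary term vanishing by Dirichlet)
\begin{align*}
\frac{d}{dt}\int_I\eta(u^\varepsilon)\,dx + \int_I\int_\R|v|^{\alpha-1}q^\varepsilon\,dv\,dx = \frac{1}{\alpha}\int_I|u^\varepsilon|^{\alpha+1}S^\varepsilon\,dx.
\end{align*}
Integrating in time, discarding the non-negative $\int_I\eta(u^\varepsilon(T))\,dx$, and bounding the right-hand side by the same H\"older/Young estimate as above yields \eqref{eq:kin_meas_est}.

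The main obstacle is the identification of the critical exponent $\tau$: it must simultaneously be compatible with the H\"older factorization of $\partial_x|u^\varepsilon|^{\alpha+1}$ through the $L^2$-gradient and the $L^{\alpha+1}$-energy, and its dual $\tau'$ must be such that the three-way Young inequality leaves $\|u^\varepsilon\|_{L^{\alpha+1}}$ raised precisely to the Gr\"onwall-compatible power $\alpha+1$. That these two requirements coincide and fix $\tau=\tfrac{2(\alpha+1)}{2\alpha+3-m}$ is the algebraic heart of the lemma; the extra Anderson term $-\partial_v(\chi^\varepsilon v S^\varepsilon)$ in the kinetic formulation is then reconciled with the direct PDE estimate via the elementary identity $\eta+\hat\eta=u\eta'$.
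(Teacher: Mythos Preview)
Your proof is correct and follows essentially the paper's approach for the first estimate: the energy identity obtained by testing with $u^{[\alpha]}$ (equivalently your $\eta'$), the chain-rule factorization of $\partial_x|u|^{\alpha+1}$, and the identification of $\tau$ through the requirement that the residual power of $u$ be exactly $\alpha+1$ for Gr\"onwall all coincide with the paper. The paper uses two successive two-way Young inequalities where you use H\"older followed by a three-way Young split, but these are algebraically the same computation.

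For the kinetic measure bound \eqref{eq:kin_meas_est} the two arguments diverge slightly. You test the kinetic form \eqref{eq:kinetic_anderson} directly with $\eta'(v)$ and use the identity $\eta+\hat\eta=u\eta'$ to collapse the two Anderson forcing contributions into $\tfrac{1}{\alpha}|u^\varepsilon|^{\alpha+1}S^\varepsilon$, after which the right-hand side is controlled by the already-established first estimate. The paper instead drops to the vanishing-viscosity level $u^{\varepsilon,\delta}$, where $q^{\varepsilon,\delta}=\delta_{v=u^{\varepsilon,\delta}}(\partial_x (u^{\varepsilon,\delta})^{[(m+1)/2]})^2$ is explicit, so that $\int|v|^{\alpha-1}q^{\varepsilon,\delta}$ becomes a pointwise multiple of $\int(\partial_x (u^{\varepsilon,\delta})^{[(m+\alpha)/2]})^2$, and then passes to the limit $\delta\to0$ by weak lower semicontinuity. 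Both routes are valid; the paper's is a touch shorter since it avoids handling the extra $-\partial_v(\chi^\varepsilon vS^\varepsilon)$ term, while yours stays at the entropy-solution level and does not require invoking the second approximation layer.
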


\begin{proof}
First, let $u_{0}\in C_{c}^{\infty}(\R_{x}^{d})$, $b^{\d}\in C^{\infty}(\R)$ increasing with $b^{\d}(u)\ge\d u$ for all $u\in\R$, $b^{\d}(u)\to u^{[m]}$ locally uniformly and let $u^{\ve,\d}$ be the classical solution to the approximating equation
\[
\partial_{t}u^{\ve,\d}=\partial_{xx}b^{\d}(u^{\ve,\d})+u^{\ve,\d}S^{\ve}(x)\text{ on }(0,T)\times I.
\]
For simplicity we drop the $\ve$ in the notation. Then, for $\eta\in C^{2}(\R)$ convex, Lipschitz continuous, we obtain
\begin{align*}
\int_{I}\eta(u^{\d}(t))dx & =\int_{I}\eta(u_{0})dx+\int_{0}^{t}\int_{I}\eta'(u^{\d})(\partial_{xx}b^{\d}(u^{\d})+u^{\d}S)\,dxdr\\
 & \le\int_{I}\eta(u_{0})dx-c\int_{0}^{t}\int_{I}(\partial_{x}F^{\eta}(u^{\d}))^{2}dxdr+\int_{0}^{t}\int_{I}\eta'(u^{\d})u^{\d}S\,dxdr,
\end{align*}
$F^{\eta}(u):=\int_{0}^{u}\sqrt{\eta''(r)(b^{\d})'(r)}dr.$ Hence, (for a non-relabeled subsequence) we have $\partial_{x}F^{\eta}(u^{\d})\rightharpoonup Z$ for some $Z\in L^{2}([0,T];L^{2}(\R_{x}^{d}))$. Since $u^{\d}\to u$ in $C([0,T];L^{1}(\R_{x}^{d}))$ we have $Z=\partial_{x}F^{\eta}(u)$ which implies 
\begin{align*}
\int_{I}\eta(u(t))dx & \le\int_{I}\eta(u_{0})dx-c\int_{0}^{t}\int_{I}(\partial_{x}F^{\eta}(u))^{2}dxdr+\int_{0}^{t}\int_{I}\eta'(u)uS\,dxdr,
\end{align*}
where $F^{\eta}(u):=m\int_{0}^{u}\sqrt{\eta''(r)|r|^{m-1}}dr$.

Using a suitable approximation of $\eta(u)=|u|^{\a+1}$ this yields, for some $c=c(\a,m)$,
\begin{align*}
\int_{I}|u(t)|^{\a+1}dx & \lesssim\int_{I}|u_{0}|^{\a+1}dx-c\int_{0}^{t}\int_{I}(\partial_{x}u{}^{[\frac{m+\a}{2}]})^{2}dxdr+\int_{0}^{t}\int_{I}|u|^{\a+1}Sdxdr.
\end{align*}
We further have, for $\tau\in[1,2)$ to be chosen later, 
\begin{equation}
\int_{I}|u|^{\a+1}Sdx\lesssim\||u|^{\a+1}\|_{W^{1,\tau}}^{\tau}+\|S\|_{W^{-1,\tau'}}^{\tau'}\label{eq:prod_est}
\end{equation}
and, for every $\eta>0$ and some $C_{\eta}\ge0$,
\begin{align}
\||u|^{\a+1}\|_{W^{1,\tau}}^{\tau} & \lesssim\int_{I}|\partial_{x}|u|^{\a+1}|^{\tau}\,dx=(\a+1)^{\tau}\int_{I}|u^{[\a]}\partial_{x}u|^{\tau}\,dx\nonumber \\
 & =(\a+1)^{\tau}\int_{I}\Big|u^{[\a-\frac{m+\a-2}{2}]}|u|^{\frac{m+\a-2}{2}}\partial_{x}u\Big|^{\tau}\,dx\label{eq:u_pow_est}\\
 & =\frac{4(\a+1)^{\tau}}{(m+\a)^{2}}\int_{I}|u^{\frac{\a-m+2}{2}}|^{\tau}|\partial_{x}u^{[\frac{m+\a}{2}]}|^{\tau}\,dx\nonumber \\
 & \le C(\int_{I}C_{\eta}|u^{\frac{\a-m+2}{2}}|^{\frac{2\tau}{2-\tau}}+\eta|\partial_{x}u^{\frac{m+\a}{2}}|^{2})\,dx.\nonumber 
\end{align}
Thus, since $\tau<2$ and choosing $\eta$ small enough, 
\begin{align*}
\int_{I}|u(t)|^{\a+1}dx\lesssim & \int_{I}|u_{0}|^{\a+1}dx-c\int_{0}^{t}\int_{I}(\partial_{x}u{}^{[\frac{m+\a}{2}]})^{2}dxdr\\
 & +\int_{0}^{t}\int_{I}|u|^{(\frac{\a-m+2}{2})(\frac{2\tau}{2-\tau})}dxdr+\|S\|_{W_{x}^{-1,\tau'}}^{\tau'}.
\end{align*}
Now we choose $\tau$ such that $(\frac{\a-m+2}{2})(\frac{2\tau}{2-\tau})=\a+1$, i.e.~since $m-2<\a$, 
\begin{align*}
\tau= & \frac{2\a+2}{2\a+3-m}\in(1,2].
\end{align*}
In conclusion, 
\begin{align*}
\int_{I}|u(t)|^{\a+1}dx\lesssim & \int_{I}|u_{0}|^{\a+1}dx-c\int_{0}^{t}\int_{I}(\partial_{x}u{}^{[\frac{m+\a}{2}]})^{2}dxdr\\
 & +\int_{0}^{t}\int_{I}|u|^{\a+1}dxdr+\|S\|_{W_{x}^{-1,\tau'}}^{\tau'}.
\end{align*}
Gronwall's inequality implies
\begin{align}
\int_{I}|u^{\ve}(t)|^{\a+1}dx & +\int_{0}^{t}\int_{I}(\partial_{x}(u^{\ve}){}^{[\frac{m+\a}{2}]})^{2}dxdr\lesssim\int_{I}|u_{0}|^{\a+1}dx+\|S^{\ve}\|_{W^{-1,\tau'}}^{\tau'}.\label{eq:eps-delta-bound}
\end{align}
For general initial data $u_{0}\in(L^{m+1}\cap L^{\a+1})(\R_{x}^{d})$ we choose a sequence of smooth approximations $u_{0}^{\d}\in C_{c}^{\infty}(\R_{x}^{d})$ such that $u_{0}^{\d}\to u_{0}$ in $(L^{m+1}\cap L^{\a+1})(\R_{x}^{d})$. The respective solutions $u^{\ve,\d}$ satisfy \eqref{eq:eps-delta-bound} and, due to \eqref{eq:l1-contr}, we may take the limit $\delta\to0$ to conclude. 

In order to establish \eqref{eq:kin_meas_est} we note that on the approximative level $u^{\ve,\d}$ the kinetic form is satisfied with $q^{\ve,\d}=\d_{v=u^{\ve,\d}}(\partial_{x}(u^{\ve,\d}){}^{[\frac{m+1}{2}]})^{2}$. Thus, 
\begin{align*}
\int_{[0,T]\times\bar{I}\times\R}|v|^{\a-1}q^{\ve,\d}drdxdv & =\int_{I}(\partial_{x}(u^{\ve,\d}){}^{[\frac{m+\a}{2}]})^{2}dtdx\\
 & \lesssim\int_{I}|u_{0}|^{\a+1}dx+\|S^{\ve}\|_{W^{-1,\tau'}}^{\tau'}.
\end{align*}
Passing to the limit $\d\to0$ yields \eqref{eq:kin_meas_est}. 
\end{proof}
\begin{cor}
\label{cor:ex-appr-anderson}Let $u_{0}\in L^{m+1}(I)$. Then, there is a unique entropy solution $u^{\ve}$ to \eqref{eq:deg_Anderson_approx} and $u^{\ve}$ satisfies Definition \ref{def:kinetic_sol} with 
\[
K_{1}\lesssim\|u_{0}\|_{L^{m+1}}^{m+1}+\|S\|_{B_{\infty,\infty}^{-\eta}}^{\tau}+1
\]
for some $\tau\ge2$ and some $\eta\in(\frac{1}{2},1)$. In particular, the constants $K_{1}$ in Definition \ref{def:kinetic_sol} can be chosen uniformly in $\ve$ and 
\begin{align*}
\|u^{\ve}\|_{L^{2}([0,T];H_{0}^{1}(I))}^{2}\le K_{1}.
\end{align*}
\end{cor}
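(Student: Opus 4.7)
The plan has three steps: (i) establishing existence and uniqueness of the approximate entropy solution $u^{\ve}$ to \eqref{eq:deg_Anderson_approx}; (ii) extracting an $\ve$-uniform $L^{2}([0,T];H_{0}^{1}(I))$ bound from Lemma \ref{lem:anders_bound} by a specific choice of the free parameter $\a$; and (iii) converting the Sobolev norm $\|S^{\ve}\|_{W^{-1,\tau'}}$ appearing in Lemma \ref{lem:anders_bound} into a Besov norm $\|S\|_{B_{\infty,\infty}^{-\eta}}$. Step (i) would follow the framework of \cite{CP03} extended by the treatment of a smooth (multiplicative) forcing in \cite{GL17}: one constructs $u^{\ve}$ as a vanishing viscosity limit of the regularization
\[
\partial_{t}u^{\ve,\d}=\d\partial_{xx}u^{\ve,\d}+\partial_{xx}(u^{\ve,\d})^{[m]}+u^{\ve,\d}S^{\ve}(x),\qquad u^{\ve,\d}|_{\partial I}=0,
\]
whose kinetic defect decomposes as $q^{\ve,\d}=m^{\ve,\d}+n^{\ve,\d}$ with $n^{\ve,\d}=\d_{v=u^{\ve,\d}}(\partial_{x}(u^{\ve,\d})^{[(m+1)/2]})^{2}$; since $S^{\ve}\in C^{\infty}$ is bounded with bounded derivatives, uniqueness follows from the standard kinetic $L^{1}$-contraction argument, treating $S^{\ve}$ as a Lipschitz multiplier exactly as in \cite{GL17}.

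For step (ii), I would apply Lemma \ref{lem:anders_bound} with $\a:=2-m\in(0,1)$ (valid since $m\in(1,2)$). This choice makes $\tfrac{m+\a}{2}=1$, so that $\partial_{x}(u^{\ve})^{[(m+\a)/2]}=\partial_{x}u^{\ve}$, and the zero Dirichlet boundary condition then upgrades $\partial_{x}u^{\ve}\in L_{t,x}^{2}$ to membership in $L^{2}([0,T];H_{0}^{1}(I))$. The corresponding conjugate exponent computes to $\tau'=\tfrac{2\a+2}{m-1}=\tfrac{6-2m}{m-1}\in[2,\infty)$, so Lemma \ref{lem:anders_bound} delivers
\[
\|u^{\ve}\|_{L_{t}^{2}H_{0,x}^{1}}^{2}+\int|v|^{\a-1}q^{\ve}\,dt\,dx\,dv\lesssim\|u_{0}\|_{L^{3-m}(I)}^{3-m}+\|S^{\ve}\|_{W^{-1,\tau'}(I)}^{\tau'},
\]
which in particular yields both \eqref{eq:kinetic_1} and the moment bound \eqref{eq:kinetic_bound} of Definition \ref{def:kinetic_sol} with the same constant.

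For step (iii), on the bounded interval $I\subset\R$ and for $\eta\in(\tfrac{1}{2},1)$, one has the chain of continuous embeddings
\[
B_{\infty,\infty}^{-\eta}(I)\hookrightarrow W^{-\eta,\tau'}(I)\hookrightarrow W^{-1,\tau'}(I),
\]
the first using the boundedness of $I$ (so that $L^{\infty}\hookrightarrow L^{\tau'}$) and the second using $\eta\le1$. By construction $\|S^{\ve}\|_{B_{\infty,\infty}^{-\eta}}\le\|S\|_{B_{\infty,\infty}^{-\eta}}$ uniformly in $\ve$, and since $3-m<m+1$ combined with the boundedness of $I$ gives $\|u_{0}\|_{L^{3-m}}^{3-m}\lesssim\|u_{0}\|_{L^{m+1}}^{m+1}+1$, setting $\tau:=\tau'\ge2$ combines everything into the claimed
\[
K_{1}\lesssim\|u_{0}\|_{L^{m+1}(I)}^{m+1}+\|S\|_{B_{\infty,\infty}^{-\eta}}^{\tau}+1,
\]
uniformly in $\ve$.

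The main obstacle is pinning down the exponent pairing: one must choose $\a$ so that the two demands --- first-order differentiability of $u^{\ve}$ (which forces $\tfrac{m+\a}{2}=1$) and finiteness of the singular moment $|v|^{\a-1}q^{\ve}$ required by Definition \ref{def:kinetic_sol} --- are simultaneously produced by a single application of Lemma \ref{lem:anders_bound}, while keeping $\tau'\ge2$ so the resulting forcing norm is controlled by a Besov norm below the white-noise threshold $s=-\tfrac{1}{2}$. Once $\a=2-m$ is identified, the Besov embedding and the adaptation of the \cite{CP03}--\cite{GL17} existence/uniqueness theory to the smooth multiplicative source $u^{\ve}S^{\ve}$ are routine.
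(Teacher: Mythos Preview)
Your approach is essentially the paper's: both proofs reduce to invoking Lemma \ref{lem:anders_bound}. The paper's proof is the single line ``We apply Lemma \ref{lem:anders_bound} with $\a\in(0,m]$,'' and you have correctly identified the key specialization $\a=2-m$ (so that $\tfrac{m+\a}{2}=1$) that produces the $L_t^2H_0^1$ bound, together with the Besov-to-Sobolev embedding on the bounded interval needed to absorb $\|S^{\ve}\|_{W^{-1,\tau'}}$ into $\|S\|_{B_{\infty,\infty}^{-\eta}}$.

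There is one small gap. Definition \ref{def:kinetic_sol} demands the bounds \eqref{eq:kinetic_1} and \eqref{eq:kinetic_bound} for \emph{every} $\a\in(0,m]$, not just the single value $\a=2-m$; your step (ii) as written only verifies that case. The fix is immediate and is exactly what the paper's one-line proof indicates: apply Lemma \ref{lem:anders_bound} for each $\a\in(0,m]$, use $\|u_0\|_{L^{\a+1}}^{\a+1}\lesssim\|u_0\|_{L^{m+1}}^{m+1}+1$ on the bounded interval, and note that $\tau'(\a)=\tfrac{2\a+2}{m-1}\le\tfrac{2m+2}{m-1}=:\tau$, so that $\|S^{\ve}\|_{W^{-1,\tau'(\a)}}^{\tau'(\a)}\lesssim\|S\|_{B_{\infty,\infty}^{-\eta}}^{\tau}+1$ uniformly in $\a$ by your same embedding argument. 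With this adjustment your proof is complete and coincides with the paper's.
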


\begin{proof}
We apply Lemma \ref{lem:anders_bound} with $\a\in(0,m]$.
\end{proof}
\begin{thm}
\label{thm:approx_anderson}Assume \eqref{eq:distr_ass} and let $u^{\ve}$ be the entropy solution to \eqref{eq:deg_Anderson_approx}. Then, for all $p\in[1,m)$, $s\in[0,\frac{3}{2}\frac{1}{m})$ we have
\[
u^{\ve}\in L{}^{p}([0,T];W_{loc}^{s,p}(I))
\]
with, for all $T\ge0$, $\mcO\subset\subset I$, 
\begin{align*}
\|u^{\ve}\|_{L^{p}([0,T];W^{s,p}(\mcO))} & \le C(\|u_{0}\|_{L^{m+1}(I)}^{m+1}+\|S\|_{B_{\infty,\infty}^{-\eta}}^{\tau}+1),
\end{align*}
for some $\tau\ge2$, $C$ independent of $\ve>0$ and $\eta\in(\frac{1}{2},1)$ small enough.
\end{thm}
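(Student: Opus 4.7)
The plan is to apply the isotropic averaging Lemma \ref{lem:av-iso} to the kinetic formulation \eqref{eq:kinetic_anderson}. The symbol $\mcL(i\tau,i\xi,v)=i\tau+m|v|^{m-1}|\xi|^{2}$ is identical to the one appearing in the porous medium example of Section \ref{sec:Isotropic-case}, so it satisfies the non-degeneracy condition \eqref{eq:cdt1-iso} with $\a=1/(m-1)$, $\b=2$, and \eqref{eq:cdt2-iso} with $\l=2(1-\g)/(m-1)$, $\mu=(m-2+\g)/(m-1)$ for any $\g\in(0,1)$; moreover, $\mcL_{par}$ satisfies the isotropic truncation property. The right-hand side of \eqref{eq:kinetic_anderson} consists of three contributions: the entropy dissipation $\partial_{v}q^{\ve}$, the noise source $\chi^{\ve}S^{\ve}$, and the $v$-divergence term $-\partial_{v}(\chi^{\ve}vS^{\ve})$. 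The entropy dissipation is handled as in the PME example: Corollary \ref{cor:ex-appr-anderson} together with Lemma \ref{lem:anders_bound} provides the uniform bound $\||v|^{-\g}q^{\ve}\|_{\mcM_{TV}}\lesssim\|u_{0}\|_{L^{m+1}}^{m+1}+\|S\|_{B_{\infty,\infty}^{-\eta}}^{\tau}+1$ for $\g$ close to $1$, so this piece alone would yield the PME-type regularity $s<2/m$.

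The heart of the argument is to absorb the two noise terms into the framework of Lemma \ref{lem:av-iso} at the cost of $\eta$ spatial derivatives for some $\eta>1/2$. Since $S^{\ve}$ is uniformly bounded in $B_{\infty,\infty}^{-\eta}(\R)$ and $\eta>1/2$ in one spatial dimension, I factor $S^{\ve}=\D_{x}^{\eta/2}T^{\ve}$ with $T^{\ve}$ uniformly bounded locally in $L^{\infty}$ via the embedding $B_{\infty,\infty}^{\eta-1/2-}\hookrightarrow L_{loc}^{\infty}$. Combining this with $|\chi^{\ve}|\le1$, the compact support of the velocity cutoff $\phi\in C_{c}^{\infty}(\R_{v})$ that will appear in the averaging (so that $v$ is bounded on $\supp\phi$), and a Bony-type paraproduct decomposition of the products $\chi^{\ve}T^{\ve}$ and $\chi^{\ve}vT^{\ve}$, I would write the noise contributions in the form $\chi^{\ve}S^{\ve}=\D_{x}^{\eta/2}g_{0}^{\ve}$ and $-\chi^{\ve}vS^{\ve}=\D_{x}^{\eta/2}g_{1}^{\ve}$ with $g_{i}^{\ve}\in L_{t,x,v}^{q}$ bounded uniformly in $\ve$ by a polynomial of $\|S\|_{B_{\infty,\infty}^{-\eta}}$. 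The positive spatial regularity of $\chi^{\ve}$ required to close the paraproduct estimates comes from the $H^{1}$-bound on $(u^{\ve})^{[(m+\a)/2]}$ supplied by Corollary \ref{cor:ex-appr-anderson}.

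Once the noise terms are cast in this form, I apply Lemma \ref{lem:av-iso} with $q$ near $1$, $\eta$ slightly larger than $1/2$, $\g$ close to $1$, $r$ close to $1$, and $\td p$ large. With these choices, $\t=(\a/r)/(\a/r+1)\to 1/m$ as $r\to 1$, and
\[
s^{*}=(1-\t)\frac{\a\b}{r}+\t\Big(\frac{\a\b}{q'}-\l-\eta\Big)\longrightarrow\frac{2}{m}-\frac{1}{2m}=\frac{3}{2m},
\]
while $1/p^{*}\to 1/m$, so $p<m$. Treating the kinetic-measure term with $\eta=0$ (yielding $s<2/m$) and the noise terms with $\eta$ just above $1/2$ (yielding $s<3/(2m)$) in two separate applications of Lemma \ref{lem:av-iso}, exploiting the linearity of the LHS operator $\mcL$, the worse exponent $3/(2m)$ dictates the final regularity. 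Collecting the uniform bounds from Corollary \ref{cor:ex-appr-anderson} then gives the claimed estimate with some $\tau\ge 2$ and some $\eta\in(1/2,1)$.

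I expect the main obstacle to be the rigorous identification of the distributional products $\chi^{\ve}S^{\ve}$ and $\chi^{\ve}vS^{\ve}$ as elements of $\D_{x}^{\eta/2}(L_{t,x,v}^{q})$: multiplying the jump-type function $\chi^{\ve}$ by a genuine distribution $S^{\ve}$ of negative regularity requires a careful paraproduct analysis, and the sum of regularities of $\chi^{\ve}$ (BV in $x$, from the entropy dissipation) and $S^{\ve}\in B_{\infty,\infty}^{-\eta}$ must be arranged to be positive. This is exactly what forces the restriction to one spatial dimension together with $\eta>1/2$, and it is the only place in the argument where the $3/(2m)$-threshold (as opposed to $2/m$) enters.
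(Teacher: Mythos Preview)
Your strategy is essentially the paper's: cast the noise contributions as $\D_{x}^{\eta/2}g$ for some $\eta>1/2$, control $g$ in $L^{1}$ via a paraproduct using the $H^{1}$-bound on $u^{\ve}$ from Corollary \ref{cor:ex-appr-anderson}, and apply Lemma \ref{lem:av-iso}. Two technical points are handled differently and deserve correction.

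First, ``two separate applications of Lemma \ref{lem:av-iso}, exploiting the linearity of $\mcL$'' cannot be taken literally: the lemma estimates $\int f\phi\,dv$ for an $f$ solving the \emph{full} equation, and a splitting $\chi^{\ve}=f_{A}+f_{B}$ along the right-hand side carries no a priori $L_{v}^{r'}(L_{t,x}^{p})$ bound on the pieces. The paper instead writes \emph{every} term uniformly as $\D_{x}^{\eta/2}(\cdot)$, in particular $\partial_{v}q^{\ve}=\D_{x}^{\eta/2}\partial_{v}(\D_{x}^{-\eta/2}q^{\ve})$, and applies the lemma once; it then remarks that the different singular-moment orders ($\g\in(0,1)$ for the dissipation versus $\g=1$ for the noise term $\chi^{\ve}vS^{\ve}$) are absorbed by going inside the proof of the lemma rather than by black-box re-application. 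Second, the factoring $S^{\ve}=\D_{x}^{\eta/2}T^{\ve}$ followed by a paraproduct of $\chi^{\ve}T^{\ve}$ is a detour, since one still needs a commutator $[\chi^{\ve},\D_{x}^{\eta/2}]$ to reach $\chi^{\ve}S^{\ve}=\D_{x}^{\eta/2}g_{0}^{\ve}$. The paper sets $g_{0}:=\D_{x}^{-\eta/2}(\chi^{\ve}S^{\ve})$ directly and bounds $\|g_{0}\|_{L^{1}_{t,x,v}}\lesssim\|\chi^{\ve}S^{\ve}\|_{L^{1}_{t,v}B^{-\eta}_{1,1}}\lesssim\|\chi^{\ve}\|_{L^{1}_{t,v}B^{\eta+\d}_{1,1}}\|S\|_{B^{-\eta}_{\infty,\infty}}$, closing with $\|\chi^{\ve}\|_{L^{1}_{t,v}B^{\eta+\d}_{1,1}}\lesssim\|u^{\ve}\|_{L^{1}_{t}W^{\eta+2\d,1}_{x}}\lesssim\|u^{\ve}\|_{L^{2}_{t}H^{1}_{0}}$.
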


\begin{proof}
Let $p\in[1,m)$, $s\in[0,\frac{3}{2}\frac{1}{m})$. For simplicity we drop the $\ve$ in the notation. Rewriting \eqref{eq:kinetic_anderson} we obtain, for $\eta\in(\frac{1}{2},1)$,
\begin{align}
\partial_{t}\chi & =m|v|^{m-1}\partial_{xx}\chi+\D_{x}^{\frac{\eta}{2}}\underbrace{\D_{x}^{-\frac{\eta}{2}}\chi S}_{:=g_{0}}+\D_{x}^{\frac{\eta}{2}}\partial_{v}\underbrace{\D_{x}^{-\frac{\eta}{2}}q}_{=:g_{1}}-\D_{x}^{\frac{\eta}{2}}\partial_{v}\underbrace{\D_{x}^{-\frac{\eta}{2}}\chi vS}_{=:g_{2}}\label{eq:kinetic_anderson_2}\\
 & =m|v|^{m-1}\partial_{xx}\chi+\D_{x}^{\frac{\eta}{2}}g_{0}+\D_{x}^{\frac{\eta}{2}}\partial_{v}g_{1}-\D_{x}^{\frac{\eta}{2}}\partial_{v}g_{2}\text{ on }(0,T)\times I\times\R.\nonumber 
\end{align}
An elementary computation shows $\|\chi\|_{L_{t,v}^{1}W_{_{x}}^{\eta,1}}\lesssim\|u\|_{L_{t}^{1}W_{_{x}}^{\eta,1}}$. We next use embedding results for Besov spaces \cite[Proposition 2.78]{BCD11}, estimates for the paraproduct of functions and distributions \cite[Section 4.4.3, Theorem 1]{RS96} and Corollary \ref{cor:ex-appr-anderson} to obtain, for $\d>0$ small enough, 
\begin{align}
\|g_{0}\|_{L_{t,x,v}^{1}} & =\|\D_{x}^{-\frac{\eta}{2}}\chi S\|_{L_{t,x,v}^{1}}\lesssim\|\chi S\|_{L_{t,v}^{1}B_{1,1}^{-\eta}}\lesssim\|\chi\|_{L_{t,v}^{1}B_{1,1}^{\eta+\d}}\|S\|_{B_{\infty,\infty}^{-\eta}}\label{eq:g0_bdd}\\
 & \lesssim\|u\|_{L_{t}^{1}(W_{x}^{\eta+2\d,1})}\|S\|_{B_{\infty,\infty}^{-\eta}}\lesssim\|u\|_{L_{t}^{2}(H_{0}^{1})}^{2}+\|S\|_{B_{\infty,\infty}^{-\eta}}^{2}\le K_{1}+\|S\|_{B_{\infty,\infty}^{-\eta}}^{2}.\nonumber 
\end{align}
Moreover, using the same reasoning we obtain
\begin{equation}
\||v|^{-1}g_{2}\|_{L_{t,x,v}^{1}}=\||v|^{-1}\D_{x}^{-\frac{\eta}{2}}\chi vS\|_{L_{t,x,v}^{1}}=\|\D_{x}^{-\frac{\eta}{2}}|\chi|S\|_{L_{t,x,v}^{1}}\lesssim K_{1}+\|S\|_{B_{\infty,\infty}^{-\eta}}^{2}.\label{eq:g2_bdd}
\end{equation}
We choose a cut-off function and localize \eqref{eq:kinetic_anderson_2} as in the proof of Corollary \ref{cor:par-hyp-av}. Hence, using \eqref{eq:f_lp_bound}, we may apply Lemma \ref{lem:av-iso}, with $\eta$ sufficiently close to $\frac{1}{2}$, $\a=\frac{1}{m-1}$, $\b=2$, $\l=2-2\frac{m-2+\gamma}{m-1}$ small enough by choosing $\g$ close to one, $r>1$ small enough, $p=r'$, $q=1$, $\t=\frac{1}{m}$, such that 
\begin{align*}
(1-\t)\b\frac{\a}{r}-\t(\l+\eta) & =\t(\frac{\b}{r}-\l-\eta)\\
 & =\frac{1}{m}\left(\frac{3}{2}+(\frac{2}{r}-2)+(2\frac{\g-1}{m-1})+(\frac{1}{2}-\eta)\right)>s.
\end{align*}
This yields, for all $\mcO\subset\subset I$, 
\begin{align*}
\|u\|_{L^{p}([0,T];W^{s,p}(\mcO))} & \lesssim\|\D_{x}^{-\frac{\eta}{2}}\chi S\|_{\mcM_{t,x,v}}+\|\D_{x}^{-\frac{\eta}{2}}|v|^{-\gamma}q\|_{\mcM_{t,x,v}}+\||v|^{-1}\D_{x}^{-\frac{\eta}{2}}\chi vS\|_{\mcM_{t,x,v}}\\
 & +\|f\|_{L_{t,x,v}^{r'}}+\|f\|_{L_{t,x,v}^{1}}+\|f\|_{L_{t,x}^{p}L_{v}^{1}}+1.
\end{align*}
Hence, since
\[
\|f\|_{L_{t,x,v}^{r'}}\lesssim\|f\|_{L_{t,x,v}^{1}}+1,\quad\|f\|_{L_{t,x,v}^{1}}=\|u\|_{L_{t,x}^{1}},\quad\|f\|_{L_{t,x}^{p}L_{v}^{1}}=\|u\|_{L_{t,x}^{p}}
\]
we have, using \eqref{eq:g0_bdd}, \eqref{eq:g2_bdd}, 
\begin{align*}
 & \|u\|_{L^{p}([0,T];W^{s,p}(\mcO))}\lesssim K_{1}+\|S\|_{B_{\infty,\infty}^{-\eta}}^{2}+\|u\|_{L_{t,x}^{1}}+\|u\|_{L_{t,x}^{p}}+1.
\end{align*}
In fact, \eqref{eq:kinetic_anderson_2} is not exactly of the form \eqref{eq:kinetic_isotropic}, since $g_{1}$, $g_{2}$ allow singular moments of different order, i.e.~$\gamma\in(0,1)$ for $g_{1}$, $\g=1$ for $g_{2}$. However, in the proof of Lemma \ref{lem:av-iso}, the terms involving $g_{2}$ only lead to better behaved terms than $g_{1}$ and thus may be absorbed. We next note that by the arguments of Lemma \ref{lem:anders_bound} 
\[
\|u\|_{L_{t,x}^{1}}\lesssim\|u_{0}\|_{L_{x}^{1}}+\|S\|_{W^{-1,\tau}}^{\tau}+1,\quad\|u\|_{L_{t,x}^{p}}\lesssim\|u_{0}\|_{L_{x}^{m+1}}+\|S\|_{W^{-1,\tau}}^{\tau}+1
\]
for some $\tau\ge2.$ Hence, by Corollary \ref{cor:ex-appr-anderson} we obtain 
\begin{align*}
\|u\|_{L^{p}([0,T];W^{s,p}(\mcO))}\lesssim & \|u_{0}\|_{L^{m+1}}^{m+1}+\|S\|_{B_{\infty,\infty}^{-\eta}}^{\tau}+\|u_{0}\|_{L_{x}^{1}}+\|u_{0}\|_{L_{x}^{m+1}}+\|S\|_{W^{-1,\tau}}^{\tau}+1\\
\lesssim & \|u_{0}\|_{L^{m+1}}^{m+1}+\|S\|_{B_{\infty,\infty}^{-\eta}}^{\tau}+1,
\end{align*}
for some $\tau\ge2$.
\end{proof}

\begin{proof}
[Proof of Corollary \ref{cor:reg_weak}] By Lemma \ref{lem:anders_bound} we have 
\[
\|u^{\ve}\|_{L^{2}([0,T];H_{0}^{1})}^{2}+\|\partial_{x}(u^{\ve})^{[m]}\|_{L^{2}([0,T];L^{2})}^{2}\le C.
\]
Hence, we also have $\|u^{\ve}S^{\ve}\|_{W^{-1,2}}^{2}\lesssim\|u^{\ve}\|_{W^{1,2}}^{2}\|S^{\ve}\|_{W^{-1,2}}^{2}\le C$. By \eqref{eq:deg_Anderson_approx} we obtain 
\[
\|\partial_{t}u^{\ve}\|_{L^{2}([0,T];W^{-1,2})}^{2}\le C.
\]
The Aubin-Lions compactness Lemma yields (for a subsequence)
\[
u^{\ve}\to u\quad\text{in }L^{2}([0,T];L^{2}(I)).
\]
This allows to pass to the limit in the weak form of \eqref{eq:deg_Anderson_approx}. Hence, Theorem \ref{thm:approx_anderson} finishes the proof.
\end{proof}

\appendix

\section{Truncation property and basic estimates\label{sec:Truncation-property-and}}

From \cite[Definition 2.1]{TT07} we recall the following definition.
\begin{defn}
\label{def:tuncation_prop}Let $m$ be a complex-valued Fourier multiplier. We say that $m$ has the truncation property if, for any locally supported bump function $\psi$ on $\C$ and any $1\le p<\infty$, the multiplier with symbol $\psi(\frac{m(\xi)}{\d})$ is an $L^{p}$-multiplier as well as an $\mcM_{TV}$-multiplier uniformly in $\delta>0$, that is, its $L^{p}$-multiplier norm ($\mcM_{TV}$-multiplier norm resp.) depends only on the support and $C^{l}$ size of $\psi$ (for some large $l$ that may depend on $m$) but otherwise is independent of $\delta$.
\end{defn}

We slightly deviate from the definition of the truncation property given in \cite[Definition 2.1]{TT07} since we require it to hold also for $p=1$ and on $\mcM_{TV}$. In \cite[Section 2.4]{TT07} it was shown that multipliers corresponding to parabolic-hyperbolic PDE satisfy the truncation property for $p>1$. Accordingly we extend this property to our Definition in the following example.
\begin{example}
Let
\[
m(\tau,\xi,v)=i\tau+ia(v)\cdot\xi-(\xi,b(v)\xi)
\]
for some measurable $a:\R\to\R^{d}$, $b:\R\to\mcS_{+}^{d\times d}$. Then, $m$ satisfies the truncation property uniformly in $v$.
\end{example}

\begin{proof}
Following \cite[Section 2.4]{TT07} it remains to consider the cases $p=1$ and $\mcM_{TV}$. Arguing as in \cite[Section 2.4]{TT07} we can consider the cases $m(\tau,\xi,v)=i\tau+ia(v)\cdot\xi$ and $m(\tau,\xi,v)=-(\xi,b(v)\xi)$ separately. By invariance under linear transformations, arguing again as in \cite[Section 2.4]{TT07} it is enough to consider $\psi(i\xi_{1})$, $\psi(|\xi|^{2})$. Due to \cite[Theorem 2.5.8]{G14-2} in order to prove that these are $L^{1}$-multipliers, we need to show that their inverse Fourier transforms have finite $L^{1}$ norm, which is true since $\psi$ is a bump function. Again by \cite[Theorem 2.5.8]{G14-2} an operator is an $L^{1}$-multiplier if and only if it is given by the convolution with a finite Borel measure. As such, it can be extended to a multiplier on $\mcM_{TV}$ with the same norm.
\end{proof}
We next provide a basic $L^{p}$ estimate for symbols satisfying the truncation property uniformly. The following estimate is an extension of \cite[Lemma 2.2]{TT07} by making use of regularity in the $v$ component of $f$. As pointed out in the introduction, this allows to avoid bootstrapping arguments in the applications, which is crucial, since these bootstrapping arguments do not allow to conclude a regularity of order more than one.
\begin{lem}
\label{lem:bsc_est}Assume that $m(\xi,v)$ satisfies the truncation property uniformly in $v$. Let $\vp,\phi$ be bounded, smooth functions, $\psi$ be a smooth cut-off function and $M_{\psi}$ be the Fourier multiplier with symbol $\vp(\xi)\psi\left(\frac{m(\xi,v)}{\d}\right)$. Then, for all $1<p\le2$, $\s\ge0$, $r\in(\frac{p'}{1+\s p'},p']\cap(1,\infty)$,
\begin{align*}
\|\int M_{\psi}f\phi\,dv\|_{L_{x}^{p}} & \lesssim\|f\phi\|_{L_{x}^{p}(H_{v}^{\s,p})}\sup_{\xi\in\supp\vp}|\Omega_{m}(\xi,\d)|^{\frac{1}{r}},
\end{align*}
where $\Omega_{m}(\xi,\d)=\{v\in\supp\phi:\,|m(\xi,v)|\le\d\}$. Moreover,
\begin{align*}
\|\int M_{\psi}f\phi\,dv\|_{\mcM_{TV;x}} & \lesssim\|f\phi\|_{\mcM_{TV;x}}.
\end{align*}
\end{lem}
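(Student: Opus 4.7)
The plan is to combine Fourier techniques in the $x$-variable with Hölder and Sobolev embedding in the one-dimensional velocity variable. Set $h(x):=\int M_{\psi}f(x,v)\phi(v)\,dv$. Taking the Fourier transform in $x$ and applying Hölder's inequality in $v$ with conjugate exponents $(r,r')$, together with the bump-function bound
\[
\Big\|\psi\!\left(\tfrac{m(\xi,\cdot)}{\d}\right)\Big\|_{L_v^r(\supp\phi)}\lesssim |\O_m(\xi,C\d)|^{1/r}
\]
(the enlargement of $\d$ by a harmless constant absorbs the support of $\psi$), produces the pointwise estimate $|\hat h(\xi)|\lesssim |\vp(\xi)|\,|\O_m(\xi,C\d)|^{1/r}\,\|\hat f(\xi,\cdot)\phi\|_{L_v^{r'}}$.

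Next, since $v\in\R$ is one-dimensional, the Sobolev embedding $H^{\sigma,p}(\R_v)\hookrightarrow L^{r'}(\R_v)$ holds precisely when $\sigma\ge 1/p-1/r'$; combining $1/r+1/r'=1$ and $1/p+1/p'=1$ this rearranges to $r\ge p'/(1+\sigma p')$, which is exactly the lower endpoint in the hypothesis. Since $\phi$ is smooth, multiplication by $\phi$ is bounded on $H_v^{\sigma,p}$, so the $L_v^{r'}$ norm above can be replaced by $\|(\hat f\phi)(\xi,\cdot)\|_{H_v^{\sigma,p}}$. One then takes $L_\xi^{p'}$ norms, pulling $\sup_\xi|\O_m|^{1/r}$ out of the integral. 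Because $1<p\le 2$ gives $p\le p'$, Minkowski's integral inequality swaps $L_\xi^{p'}$ and $L_v^p$ in the favourable direction; the Bessel potential $(1-\partial_v^2)^{\sigma/2}$ commutes with $\mcF_x$, so a pointwise-in-$v$ application of Hausdorff--Young in $x$ bounds the inner $L_\xi^{p'}$ norm by the corresponding $L_x^p$ norm, yielding $\|f\phi\|_{L_x^p H_v^{\sigma,p}}$. A final Hausdorff--Young on $h$ itself, $\|h\|_{L_x^p}\le\|\hat h\|_{L_\xi^{p'}}$ (valid since $p\le 2$), closes the $L^p$ estimate.

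The $\mcM_{TV;x}$ bound is immediate from the extended truncation property established in the example just above the lemma: $M_\psi$ is a uniformly bounded $\mcM_{TV;x}$ multiplier in $v$ and $\d$, so Minkowski yields
\[
\|h\|_{\mcM_{TV;x}}\le\int \|M_\psi f(\cdot,v)\phi(v)\|_{\mcM_{TV;x}}\,dv\lesssim \|f\phi\|_{\mcM_{TV;x}}.
\]

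The main obstacle is the bookkeeping of exponents: the admissible range $r\in(p'/(1+\sigma p'),p']$ must coincide simultaneously with the Sobolev embedding window $H_v^{\sigma,p}\hookrightarrow L_v^{r'}$ and with the direction of Minkowski's inequality interchanging $L_\xi^{p'}$ and $L_v^p$, while Hausdorff--Young on both sides of the $x$-Fourier transform forces $p\le 2$. A secondary technicality is justifying that multiplication by $\phi(v)$ is bounded on $H_v^{\sigma,p}$ for non-integer $\sigma$ --- this is standard for smooth compactly supported $\phi$ via Bessel potential calculus but deserves an explicit line in the full proof.
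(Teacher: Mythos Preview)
Your argument works cleanly for $p=2$ (where it essentially reproduces the paper's first step via Plancherel), but it breaks down for $p<2$ at the final step. You write ``A final Hausdorff--Young on $h$ itself, $\|h\|_{L_x^p}\le\|\hat h\|_{L_\xi^{p'}}$ (valid since $p\le 2$).'' This inequality is false in general: Hausdorff--Young gives $\|\hat h\|_{L^{p'}}\le\|h\|_{L^p}$ for $p\le 2$, not the reverse. Applying Hausdorff--Young to the \emph{inverse} transform yields $\|h\|_{L^{p}}\le\|\hat h\|_{L^{p'}}$ only when $p'\le 2$, i.e.\ $p\ge 2$. So for $1<p<2$ your chain of inequalities uses Hausdorff--Young once in the correct direction (on $f$) and once in the wrong direction (on $h$), and the argument collapses. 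This is not a bookkeeping issue but a structural one: a purely Fourier-side argument of this type can only close at $p=2$.

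The paper handles this by proving the sharp estimate only at $p=2$ (exactly your computation, with Plancherel replacing both Hausdorff--Young steps), then observing that the \emph{truncation property} --- which you use only for the $\mcM_{TV}$ bound --- also gives a uniform $L^\eta\to L^\eta$ bound on $M_\psi$ for every $\eta>1$, albeit without the $|\Omega_m|^{1/r}$ gain. Complex interpolation between the $L^2$ estimate (with gain) and the $L^\eta$ estimate (without gain) then produces the $L^p$ result for $1<p<2$, with the interpolated gain exponent; a final rescaling of $\sigma$ recovers the stated range of $r$. In short, the truncation property is not optional in the $L^p$ part --- it is what replaces the unavailable reverse Hausdorff--Young.
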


\begin{proof}
We first consider the case $p=2$. Then
\begin{align*}
 & \|\int M_{\psi}f\phi\,dv\|_{L_{x}^{2}}\lesssim\|\int\mcF_{x}^{-1}\vp(\xi)\psi\left(\frac{m(\xi,v)}{\d}\right)\hat{f}\phi\,dv\|_{L_{x}^{2}}\\
 & =\|\int\vp(\xi)\psi\left(\frac{m(\xi,v)}{\d}\right)\hat{f}\phi\,dv\|_{L_{\xi}^{2}}\lesssim\big\|\vp(\xi)\|\psi\left(\frac{m(\xi,v)}{\d}\right)\|_{H_{v}^{-\s,2}}\|\hat{f}\phi\|_{H_{v}^{\s,2}}\big\|{}_{L_{\xi}^{2}}\\
 & \lesssim\sup_{\xi\in\supp\vp}\|\psi\left(\frac{m(\xi,v)}{\d}\right)\|_{H_{v}^{-\s,2}}\|\hat{f}\phi\|_{L_{\xi}^{2}(H_{v}^{\s,2})}.
\end{align*}
Note
\begin{align*}
\|\hat{f}\phi\|_{L_{\xi}^{2}(H_{v}^{\s,2})}^{2} & =\int\|\hat{f}\phi\|_{H_{v}^{\s,2}}^{2}d\xi=\int|(1+\Delta_{v})^{\frac{\s}{2}}\hat{f}\phi|^{2}\,dvd\xi\\
 & =\int|\mcF_{x}(1+\Delta_{v})^{\frac{\s}{2}}f\phi|^{2}\,d\xi dv=\int|(1+\Delta_{v})^{\frac{\s}{2}}f\phi|^{2}\,dxdv\\
 & =\int\|f\phi\|_{H_{v}^{\s,2}}^{2}\,dx=\|f\phi\|_{L_{x}^{2}H_{v}^{\s,2}}^{2}.
\end{align*}
By Sobolev embeddings (cf.~e.g.~\cite[Theorem 1.66]{BCD11}) we have $H_{v}^{\s,2}\hookrightarrow L_{v}^{r'}$ for all $r'\in[2,\frac{2}{1-2\s}]\cap\R$. Hence, for $r\in[\frac{2}{1+2\s},2]\cap(1,\infty)$ we have $L_{v}^{r}\hookrightarrow H_{v}^{-\s,2}$. Fix $r\in[\frac{2}{1+2\s},2]\cap(1,\infty)$ arbitrary. Then
\begin{align*}
\|\int M_{\psi}f\phi\,dv\|_{L_{x}^{2}} & \lesssim\sup_{\xi\in\supp\vp}\|\psi\left(\frac{m(\xi,v)}{\d}\right)\|_{L_{v}^{r}}\|f\phi\|_{L_{x}^{2}(H_{v}^{\s,2})}\\
 & \lesssim\sup_{\xi\in\supp\vp}|\Omega_{m}(\xi,\d)|^{\frac{1}{r}}\|f\phi\|_{L_{x}^{2}(H_{v}^{\s,2})}.
\end{align*}
This finishes the proof in case of $p=2.$

Due to the truncation property (on $L^{1}$ and $\mcM_{TV}$) uniform in $v$, we have, for all $\eta\ge1$, 
\begin{align*}
\|\int M_{\psi}f\phi\,dv\|_{L_{x}^{\eta}} & \lesssim\|f\phi\|_{L_{x,v}^{\eta}}
\end{align*}
and
\begin{align*}
\|\int M_{\psi}f\phi\,dv\|_{\mcM_{TV}} & \lesssim\|f\phi\|_{\mcM_{TV}}.
\end{align*}

We now conclude by interpolation: From the above we have that $\overline{M}_{\psi}f:=\int M_{\psi}f\phi\,dv$ is a bounded linear operator in $L(L_{x}^{2}(H_{v}^{\s,2});L_{x}^{2})\cap L(L_{x,v}^{\eta};L_{x}^{\eta})$. By complex interpolation, for $\t\in(0,1)$, $\overline{M}_{\psi}$ is a bounded linear operator in $L([L_{x}^{2}(H_{v}^{\s,2}),L_{x,v}^{\eta}]_{\t};[L_{x}^{2},L_{x}^{\eta}]_{\t}).$ Interpolation of Banach space valued $L^{p}$-spaces yields
\[
[L_{x}^{2}(H_{v}^{\s,2}),L_{x,v}^{\eta}]_{\t}=L_{x}^{\frac{2}{1+\t(\frac{2}{\eta}-1)}}([H_{v}^{\s,2},L_{v}^{\eta}]_{\t}).
\]
Next we note that, for $\eta>1$, 
\[
[H_{v}^{\s,2},L_{v}^{\eta}]_{\t}=H_{v}^{(1-\t)\s,\frac{2}{1+\t(\frac{2}{\eta}-1)}}
\]
Hence, 
\begin{align*}
[L_{x}^{2}(H_{v}^{\s,2}),L_{x,v}^{\eta}]_{\t} & \supseteq L_{x}^{\frac{2}{1+\t(\frac{2}{\eta}-1)}}(H_{v}^{(1-\t)\s,\frac{2}{1+\t(\frac{2}{\eta}-1)}})\\{}
[L_{x}^{2},L_{x}^{\eta}]_{\t} & =L^{\frac{2}{1+\t(\frac{2}{\eta}-1)}}.
\end{align*}

Let now $p\in(1,2)$. Let $\eta>1$ be such that $\t=\frac{2-p}{p}\frac{\eta}{2-\eta}\in(0,1)$, i.e.~$p=\frac{2}{1+\t(\frac{2}{\eta}-1)}$. Then, in conclusion, for all $\s>0$ and all $r\in[\frac{2}{1+2\s},2]\cap(1,\infty)$,
\begin{align*}
\|\int M_{\psi}f\phi\,dv\|_{L^{p}} & =\|\int M_{\psi}f\phi\,dv\|_{L^{\frac{2}{1+\t(\frac{2}{\eta}-1)}}}\\
 & \lesssim\|\overline{M}_{\psi}\|_{L(L_{x}^{2}(H_{v}^{\s,2});L_{x}^{2})}^{1-\t}\|\overline{M}_{\psi}\|_{L(L_{x,v}^{\eta};L_{x}^{\eta})}^{\t}\|f\phi\|_{L_{x}^{\frac{2}{1+\t(\frac{2}{\eta}-1)}}(H_{v}^{(1-\t)\s,\frac{2}{1+\t(\frac{2}{\eta}-1)}})}\\
 & \lesssim\sup_{\xi}|\Omega_{m}(\xi,\d)|^{\frac{2}{rp'}}\|f\phi\|_{L_{x}^{p}(H_{v}^{2\s\frac{p-\eta}{p(2-\eta)},p})}.
\end{align*}

Now given $\s>0$ we apply the above with $\s$ replaced by $\s':=\frac{p(2-\eta)}{2(p-\eta)}\s>0$ and $\eta>1$ small enough. Again choosing $\eta>1$ small enough, this yields the claim for all $r\in(\frac{p'}{1+\s p'},p']\cap(1,\infty)$. 
\end{proof}

\section{\label{app:kin_solutions}Entropy solutions for parabolic-hyperbolic PDE with a source}

In this section we present a sketch of the proof of well-posedness of entropy/kinetic solutions for PDE of the type
\begin{equation}
\partial_{t}u+\div A(u)=\div\left(b(u)\nabla u\right)+S(t,x)\quad\text{on }(0,T)\times\R^{d}\label{eq:app-par-hyp}
\end{equation}
with 
\begin{align}
u_{0} & \in L^{1}(\R^{d}),\,S\in L^{1}([0,T]\times\R^{d})\nonumber \\
a & =A'\in L_{loc}^{\infty}(\R;\R^{d})\label{eq:ph-as-1-1}\\
b_{ij}(\cdot) & =\sum_{k=1}^{d}\s_{ik}(\cdot)\s_{kj}(\cdot),\quad\s_{ik}\in L_{loc}^{\infty}(\R;\R^{d}).\nonumber 
\end{align}

\begin{thm}
\label{thm:wp-kinetic}Let $u_{0}\in L^{1}(\R^{d})$, $S\in L^{1}([0,T]\times\R^{d}).$ Then there is a unique entropy solution $u$ to \eqref{eq:app-par-hyp} satisfying $u\in C([0,T];L^{1}(\R^{d}))$. For two entropy solutions $u^{1}$, $u^{2}$ with initial conditions $u_{0}^{1},u_{0}^{2}$ and forcing $S^{1},S^{2}$ we have 
\begin{equation}
\sup_{t\in[0,T]}\|u^{1}(t)-u^{2}(t)\|_{L^{1}(\R^{d})}\le\|u_{0}^{1}-u_{0}^{2}\|_{L^{1}(\R^{d})}+\|S^{1}-S^{2}\|_{L^{1}([0,T]\times\R^{d})}.\label{eq:l1-contr}
\end{equation}
Moreover, if $u_{0}\in L^{p}(\R^{d})$, $S\in L^{p}([0,T]\times\R^{d})$ for some $p\in[1,\infty)$, then
\begin{align}
\sup_{t\in[0,T]}\|u(t)\|_{L_{x}^{p}} & \le C(\|u_{0}\|_{L_{x}^{p}}+\|S\|_{L_{t,x}^{p}}),\label{eq:lp-bound}
\end{align}
for some constant $C=C(T,p)$.
\end{thm}

\begin{proof}
\red{\textbf{Uniqueness:} We present a sketch of the proof. The argument is a combination of \cite{CP03,GL17} and is rigorously justified following the convolution error estimates from \cite{CP03,GL17}. Owing to \cite[proof of Theorem 11]{GL17} we note that $g(t,x,v)=1_{v<u(t,x)}$ satisfies the same kinetic equation as $f$. We further note that, informally, due to Definition \ref{def:kinetic_sol-1}, (ii), (iii),
\[
n(t,x,v)=\delta_{v=u(t,x)}\sum_{k=1}^{d}\left(\sum_{i=1}^{d}\partial_{x_{i}}\b_{ik}(u(t,x))\right)^{2}.
\]

We next note that,
\begin{align*}
\partial_{t}\int_{\R}g^{1}(1-g^{2})\,dv= & \int_{\R}\partial_{t}g^{1}(1-g^{2})-g^{1}\partial_{t}g^{2}\,dv\\
= & \int_{\R}(-a(v)\cdot\nabla_{x}g^{1}+\div(b(v)\nabla_{x}g^{1})+\partial_{v}q^{1}+\delta_{v=u^{1}}S^{1})(1-g^{2})\\
 & -g^{1}(-a(v)\cdot\nabla_{x}g^{2}+\div(b(v)\nabla_{x}g^{2})+\partial_{v}q^{2}+\delta_{v=u^{2}}S^{2})\,dv\\
= & -\div_{x}\int_{\R}(a(v)g^{1}(1-g^{2})\,dv+2\int_{\R}\nabla_{x}g^{1}\cdot b(v)\nabla_{x}g^{2}\,dv\\
 & +\int_{\R}(q^{1}\partial_{v}g^{2}+\partial_{v}g^{1}q^{2})\,dv+\int(\delta_{v=u^{1}}S^{1})(1-g^{2})-g^{1}(\delta_{v=u^{2}}S^{2})\,dv.
\end{align*}
Concerning the forcing, as in \cite{GL17}, we observe that
\begin{align*}
\int_{\R}(\delta_{v=u^{1}}S^{1})(1-g^{2})-g^{1}(\delta_{v=u^{2}}S^{2})\,dv & =1_{u^{1}\ge u^{2}}(S^{1}-S^{2}).
\end{align*}
Next, as in \cite{CP03},
\begin{align*}
 & \int_{\R}(q^{1}\partial_{v}g^{2}+\partial_{v}g^{1}q^{2})\,dv=-\int_{\R}(q^{1}\delta_{v=u^{2}}+\delta_{v=u^{1}}q^{2})\,dv\\
 & \le-\int_{\R}\sum_{k=1}^{d}\left(\sum_{i=1}^{d}\partial_{x_{i}}\b_{ik}(u^{1})\right)^{2}\delta_{v=u^{1}}\delta_{v=u^{2}}+\delta_{v=u^{1}}\delta_{v=u^{2}}\sum_{k=1}^{d}\left(\sum_{i=1}^{d}\partial_{x_{i}}\b_{ik}(u^{2})\right)^{2}\,dv\\
 & \le-2\int_{\R}\delta_{v=u^{1}}\delta_{v=u^{2}}\sum_{k=1}^{d}\left(\sum_{i=1}^{d}\partial_{x_{i}}\b_{ik}(u^{1})\sum_{j=1}^{d}\partial_{x_{j}}\b_{jk}(u^{2})\right)\,dv\\
 & =-2\int_{\R}\delta_{v=u^{1}}\delta_{v=u^{2}}\sum_{i,j,k=1}^{d}\s_{ik}(u^{1})\partial_{x_{i}}u^{1}\s_{jk}(u^{2})\partial_{x_{j}}u^{2}\,dv\\
 & =-2\int_{\R}\delta_{v=u^{1}}\delta_{v=u^{2}}\sum_{i,j=1}^{d}b_{ij}(v)\partial_{x_{i}}u^{1}\partial_{x_{j}}u^{2}\,dv.
\end{align*}
Note that, informally (justified as in \cite{CP03} based on the chain-rule Definition \ref{def:kinetic_sol-1} (ii)), 
\[
2\int_{\R}\nabla_{x}g^{1}\cdot b(v)\nabla_{x}g^{2}\,dv=2\sum_{i,j=1}^{d}\int_{\R}b_{ij}(v)\delta_{v=u^{1}}\partial_{x_{i}}u^{1}\delta_{v=u^{2}}\partial_{x_{j}}u^{2}\,dv.
\]
We thus obtain that
\begin{align*}
\partial_{t}\int_{\R^{d+1}}g^{1}(1-g^{2})\,dvdx & \le\int_{\R^{d}}1_{u^{1}\ge u^{2}}(S^{1}-S^{2})\,dx.
\end{align*}
Since $\int g^{1}(1-g^{2})\,dvdx=\int(u^{1}-u^{2})_{+}\,dx$ this implies
\begin{align*}
\int_{\R^{d}}(u^{1}(t)-u^{2}(t))_{+}\,dx & \le\int_{\R^{d}}(u_{0}^{1}-u_{0}^{2})_{+}\,dx+\int_{0}^{t}\int_{\R^{d}}1_{u^{1}\ge u^{2}}(S^{1}-S^{2})\,dxdr,
\end{align*}
which by reversing the roles of $u^{1}$ and $u^{2}$ implies \eqref{eq:l1-contr}.

\textbf{Existence:} Step 1: Assume that $u_{0}\in C_{c}^{\infty}(\R_{x}^{d})$, $S\in C_{c}^{\infty}((0,T)\times\R_{x}^{d})$.

The construction of solutions relies on a smooth, non-degenerate approximation of $A$, $b$. Let $A^{\ve}:\R\to\R^{d}$, $b^{\ve}:\R\to\mcS_{+}^{d\times d}$ be smooth, Lipschitz continuous, satisfying $b^{\ve}(u)\ge\ve Id$ for all $u\in\R$, $\ve>0$ and $A^{\ve},b^{\ve}\to A,b$ locally uniformly. Then, by \cite{LSU67} there is a classical solution to
\begin{equation}
\partial_{t}u^{\ve}+\div A^{\ve}(u^{\ve})=\div\left(b^{\ve}(u^{\ve})\nabla u^{\ve}\right)+S(t,x)\quad\text{on }(0,T)\times\R_{x}^{d}.\label{eq:visc_approx}
\end{equation}
For $\eta\in C^{2}(\R_{v})$ convex we have 
\begin{align}
\partial_{t}\int_{\R_{x}^{d}}\eta(u^{\ve}(t))dx= & \int_{\R_{x}^{d}}\eta'(u^{\ve}(t))(-\div A^{\ve}(u^{\ve})+\div\left(b^{\ve}(u^{\ve})\nabla u^{\ve}\right)+S(t,x))dx\nonumber \\
= & \int_{\R_{x}^{d}}-\eta'(u^{\ve}(t))(A^{\ve})'(u^{\ve})\cdot\nabla u^{\ve}-\eta''(u^{\ve}(t))(\nabla u^{\ve}\cdot b^{\ve}(u^{\ve})\nabla u^{\ve})\label{eq:eta-bound}\\
 & +\eta'(u^{\ve}(t))S(t,x)\,dx\nonumber \\
\le & \int_{\R_{x}^{d}}\eta'(u^{\ve}(t))S(t,x)\,dx.\nonumber 
\end{align}
Hence, by a standard approximation argument, for all $p\in[1,\infty)$,
\begin{align*}
\frac{1}{p}\partial_{t}\int_{\R_{x}^{d}}|u^{\ve}(t)|^{p}dx & \le\int_{\R_{x}^{d}}u^{\ve}(t)^{[p-1]}S(t,x)dx\lesssim\int_{\R_{x}^{d}}|u^{\ve}(t)|^{p}+|S(t,x)|^{p}\,dx
\end{align*}
and thus
\begin{align}
\sup_{t\in[0,T]}\|u^{\ve}(t)\|_{L_{x}^{p}} & \le C(\|u_{0}\|_{L_{x}^{p}}+\|S\|_{L_{t,x}^{p}}).\label{eq:approx-lp-bound}
\end{align}
By the $L^{1}$-contraction \eqref{eq:l1-contr} we further have, uniformly in $\ve>0$,
\begin{align*}
\sup_{t\in[0,T]}\|u^{\ve}\|_{\dot{BV}_{x}}\le & \|u_{0}\|_{\dot{BV}_{x}}+\sup_{t\in[0,T]}\|S\|_{\dot{BV}_{x}}\\
\|\partial_{t}u^{\ve}(t,\cdot)\|_{L_{x}^{1}}\le & \|\partial_{t}u^{\ve}(0)\|_{L_{x}^{1}}+\|\partial_{t}S\|_{L_{t,x}^{1}}\\
\le & \|\div A(u_{0})+\div(b(u_{0})\nabla u_{0})+S(0,\cdot)\|_{L_{x}^{1}}+\|\partial_{t}S\|_{L_{t,x}^{1}}.
\end{align*}
Since $u^{\ve}$ is a classical solution it is easy to verify that $u^{\ve}$ is an entropy solution following the lines of \cite[Section 7]{CP03}. The above estimates imply the convergence (of a non-relabeled subsequence) $u^{\ve}\to u$ in $C([0,T];L^{1}(\R^{d}))$. The verification that $u$ is an entropy solution again follows from the same arguments as \cite[Section 7]{CP03}. The $L^{p}$ bound \eqref{eq:lp-bound} follows from \eqref{eq:approx-lp-bound}.

Step 2: Let now $u_{0}\in L^{1}(\R_{x}^{d})$, $S\in L^{1}((0,T)\times\R_{x}^{d})$. 

We choose $u_{0}^{\ve}\in C_{c}^{\infty}(\R^{d})$, $S^{\ve}\in C_{c}^{\infty}((0,T)\times\R^{d}))$ such that $u_{0}^{\ve}\to u_{0}$ in $L^{1}(\R_{x}^{d})$ and $S^{\ve}\to S$ in $L^{1}((0,T)\times\R^{d}))$. By the $L^{1}$-contraction \eqref{eq:l1-contr} this implies that $u^{\ve}\to u$ in $C([0,T];L^{1}(\R^{d}))$. The verification that $u$ is an entropy solutions again follows along the lines of \cite[Section 7, Step 2]{CP03}.}
\end{proof}

\section{\label{app:ebm}The case $m\ge2$}

In this section we present an improvement of the results obtained in \cite{E05}. We consider 
\begin{equation}
\partial_{t}u+\div A(u)=\D u^{[m]}+S(t,x)\quad\text{on }(0,T)\times\R_{x}^{d}\label{eq:par-hyp-1}
\end{equation}
where
\begin{align}
u_{0} & \in L^{1}(\R_{x}^{d}),\,S\in L^{1}([0,T]\times\R_{x}^{d})\nonumber \\
a=A' & \in L_{loc}^{\infty}(\R;\R^{d}),\label{eq:ph-as-1}\\
u^{[m]}=|u|^{m-1}u & \text{ with }m\ge2.\nonumber 
\end{align}

By \cite{CP03} and Appendix \ref{app:kin_solutions} there is a unique entropy solution to \eqref{eq:par-hyp-1}.
\begin{lem}
\label{lem:ebm}Let $\gamma>0$, $u_{0}\in(L^{1}\cap L^{1+\g})(\R_{x}^{d})$ and $S\in(L^{1}\cap L^{1+\g})([0,T]\times\R_{x}^{d})$. Then, there are $c_{\gamma,m},C_{\gamma}>0$ such that
\begin{align*}
\sup_{t\in[0,T]}\|u(t)\|_{1+\gamma}^{1+\gamma}+c_{\gamma,m}\int_{0}^{T}\int_{\R_{x}^{d}}(\nabla u{}^{[\frac{\gamma+m}{2}]})^{2}dx & \le C_{\gamma}(\|u_{0}\|_{L_{x}^{1+\gamma}}^{1+\gamma}+\|S\|_{L_{t,x}^{1+\gamma}}^{1+\gamma}).
\end{align*}
\end{lem}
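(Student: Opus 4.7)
The statement is essentially a corollary of Lemma \ref{lem:ph-est} once one identifies the parabolic part of the entropy dissipation measure for the PME nonlinearity $b(v)=m|v|^{m-1}I$. My plan is to execute this reduction in three short steps; there is no genuine obstacle beyond careful bookkeeping.

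First, I apply Lemma \ref{lem:ph-est} with its parameter $\gamma$ replaced by $1-\gamma$. Since the current $\gamma>0$, the admissibility $1-\gamma<1$ is automatic, and the substitutions $2-(1-\gamma)=1+\gamma$, $|v|^{-(1-\gamma)}=|v|^{\gamma-1}$ convert that lemma directly into
\[
\sup_{t\in[0,T]}\|u(t)\|_{L^{1+\gamma}_{x}}^{1+\gamma}+\gamma\int_{0}^{T}\!\int_{\R^{d+1}}|v|^{\gamma-1}q\,dvdxdr\le C\big(\|u_{0}\|_{L^{1+\gamma}_{x}}^{1+\gamma}+\|S\|_{L^{1+\gamma}_{t,x}}^{1+\gamma}\big),
\]
which immediately gives the first term in the conclusion.

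Second, I identify the gradient term with a multiple of the dissipation $\int|v|^{\gamma-1}q$. Working on the vanishing viscosity approximation $u^{\ve,\eta}$ from Appendix \ref{app:kin_solutions}, the parabolic part of the kinetic measure has the pointwise representation $n^{\ve,\eta}=\delta_{v=u^{\ve,\eta}}\sum_{k}(\sum_{i}\partial_{x_{i}}\beta_{ik}(u^{\ve,\eta}))^{2}$. For the PME one has $\sigma(v)=\sqrt{m}|v|^{(m-1)/2}I$, so $\beta_{ik}(v)=\frac{2\sqrt{m}}{m+1}v^{[(m+1)/2]}\delta_{ik}$ and hence $n^{\ve,\eta}=\delta_{v=u^{\ve,\eta}}\frac{4m}{(m+1)^{2}}|\nabla(u^{\ve,\eta})^{[(m+1)/2]}|^{2}$. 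Testing against $|v|^{\gamma-1}$ and using the chain-rule identity $|\nabla w^{[p]}|^{2}=p^{2}|w|^{2(p-1)}|\nabla w|^{2}$ to reduce both $|u^{\ve,\eta}|^{\gamma-1}|\nabla(u^{\ve,\eta})^{[(m+1)/2]}|^{2}$ and $|\nabla(u^{\ve,\eta})^{[(\gamma+m)/2]}|^{2}$ to $|u^{\ve,\eta}|^{\gamma+m-2}|\nabla u^{\ve,\eta}|^{2}$ yields
\[
\int_{\R}|v|^{\gamma-1}n^{\ve,\eta}\,dv=\frac{4m}{(\gamma+m)^{2}}|\nabla(u^{\ve,\eta})^{[(\gamma+m)/2]}|^{2}.
\]

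Third, since $q^{\ve,\eta}=m^{\ve,\eta}+n^{\ve,\eta}\ge n^{\ve,\eta}$ as non-negative measures, the two previous steps combine on the approximate level into the asserted inequality with $c_{\gamma,m}:=\gamma\cdot\tfrac{4m}{(\gamma+m)^{2}}>0$, uniformly in $\ve,\eta$. Passing to the vanishing viscosity limit by weak lower semicontinuity on both sides is done exactly as in the proof of Lemma \ref{lem:ph-est}, including the cut-off and Fatou argument used there to handle the singularity of $|v|^{\gamma-1}$ at $v=0$; this is justified by the bound $\gamma-1>-1$ coming from $\gamma>0$, which is precisely the regime in which the singular-moment bound of Lemma \ref{lem:ph-est} is meaningful.
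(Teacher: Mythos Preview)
Your proof is correct and, at its core, unwinds to the same energy computation as the paper's, but you package it differently. The paper argues directly: test \eqref{eq:par-hyp-1} with $u^{[\gamma]}$, integrate by parts in the Laplacian term to produce $-\tfrac{4\gamma m(1+\gamma)}{(\gamma+m)^{2}}\int(\nabla u^{[(\gamma+m)/2]})^{2}$, use Young's inequality on the source term, and close with Gronwall. You instead invoke Lemma~\ref{lem:ph-est} (with its parameter shifted to $1-\gamma$) to obtain the singular-moment bound $\gamma\int|v|^{\gamma-1}q\,dvdxdt$, and then read off the gradient control by identifying the parabolic part $n^{\ve,\eta}$ of the kinetic measure explicitly for $b(v)=m|v|^{m-1}I$ and using $q^{\ve,\eta}\ge n^{\ve,\eta}$. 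Your chain-rule computation producing $\int|v|^{\gamma-1}n^{\ve,\eta}\,dv=\tfrac{4m}{(\gamma+m)^{2}}|\nabla(u^{\ve,\eta})^{[(\gamma+m)/2]}|^{2}$ is correct, and the resulting constant $c_{\gamma,m}=\tfrac{4\gamma m}{(\gamma+m)^{2}}$ matches the paper's (up to the harmless factor $1+\gamma$ absorbed into $C_\gamma$). What your route buys is economy---you reuse Lemma~\ref{lem:ph-est} rather than redoing the test-function argument---and it makes transparent that Lemma~\ref{lem:ebm} is really the same a-priori bound viewed through the kinetic measure; the paper's direct derivation is more self-contained for a reader who reaches Appendix~\ref{app:ebm} first.
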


\begin{proof}
First let $u_{0}\in C_{c}^{\infty}(\R_{x}^{d})$, $S\in C_{c}^{\infty}((0,T)\times\R_{x}^{d})$ and $A^{\ve}$ be smooth, Lipschitz continuous with $A^{\ve}\to A$ locally uniformly. Then, for $\ve>0$, there is a unique classical solution to
\[
\partial_{t}u^{\ve}+\div A^{\ve}(u^{\ve})=\ve\D u^{\ve}+\D(u^{\ve})^{[m]}+S(t,x)\quad\text{on }(0,T)\times\R_{x}^{d}.
\]
From \eqref{eq:eta-bound} we have, for $\eta\in C^{2}(\R)$ convex, Lipschitz continuous, 
\begin{align*}
\partial_{t}\int_{\R_{x}^{d}}\eta(u^{\ve}(t))dx\le & \int_{\R_{x}^{d}}-\eta''(u^{\ve}(t))|\nabla u^{\ve}(t)|^{2}|u^{\ve}(t)|^{m-1}+\eta'(u^{\ve}(t))S(t,x)dx\\
\le & \int_{\R_{x}^{d}}-|\nabla F^{\eta}(u^{\ve}(t))|^{2}+\frac{\gamma}{1+\gamma}|\eta'(u^{\ve}(t))|^{\frac{1+\gamma}{\gamma}}+\frac{1}{1+\gamma}|S(t,x)|^{1+\gamma}dx,
\end{align*}
where $F^{\eta}(u):=m\int_{0}^{u}\sqrt{\eta''(r)|r|^{m-1}}dr$. Integrating in time and choosing a suitable approximation of $\eta$ this inequality may be applied to $\eta(u)=|u|^{1+\gamma}$, which yields 
\begin{align*}
\int_{\R_{x}^{d}}|u^{\ve}(t)|^{1+\gamma}dx\lesssim & \int_{\R_{x}^{d}}|u_{0}|^{1+\gamma}dx-\frac{4\gamma m(1+\gamma)}{(\gamma+m)^{2}}\int_{0}^{t}\int_{\R_{x}^{d}}(\nabla(u^{\ve}){}^{[\frac{\gamma+m}{2}]})^{2}\,dx\\
 & +\int_{\R_{x}^{d}}|u^{\ve}|^{1+\gamma}+|S(t,x)|^{1+\gamma}\,dx.
\end{align*}
Gronwall's inequality yields
\begin{align}
\sup_{t\in[0,T]}\|u^{\ve}(t)\|_{L_{x}^{1+\gamma}}^{1+\gamma}+c_{\gamma,m}\int_{0}^{T}\int_{\R_{x}^{d}}(\nabla(u^{\ve}){}^{[\frac{\gamma+m}{2}]})^{2}dxds & \le C_{\gamma}(\|u_{0}\|_{L_{x}^{1+\gamma}}^{1+\gamma}+\|S\|_{L_{t,x}^{1+\gamma}}^{1+\gamma}).\label{eq:eps-bound}
\end{align}
From the construction of entropy solutions (Theorem \ref{thm:wp-kinetic}) we have $u^{\ve}\to u$ in $C([0,T];L^{1}(\R_{x}^{d}))$. Moreover, by \eqref{eq:eps-bound} for a non-relabeled subsequence $\nabla(u^{\ve}){}^{[\frac{\gamma+m}{2}]}\rightharpoonup Z$ in $L^{2}([0,T]\times\R_{x}^{d})$. Since $u^{\ve}\to u$ a.e.~we have $Z=\nabla(u){}^{[\frac{\gamma+m}{2}]}$ which allows to pass to the limit in \eqref{eq:eps-bound}.

For general $u_{0}\in(L^{1}\cap L^{1+\g})(\R_{x}^{d})$, $S\in(L^{1}\cap L^{1+\g})([0,T]\times\R_{x}^{d})$ we choose smooth, compactly supported approximations $u_{0}^{\ve}$, $S^{\ve}$ with $\|u_{0}^{\ve}\|_{L_{x}^{1+\gamma}}^{1+\gamma}\le\|u_{0}\|_{L_{x}^{1+\gamma}}^{1+\gamma}$ and $\|S^{\ve}\|_{L_{t,x}^{1+\gamma}}^{1+\gamma}\le\|S\|_{L_{t,x}^{1+\gamma}}^{1+\gamma}$ and $u_{0}^{\ve}\to u_{0}$, $S^{\ve}\to S$ in $L^{1}$. The corresponding entropy solution $u^{\ve}$ then satisfies \eqref{eq:eps-bound}. Due to Theorem \ref{thm:wp-kinetic} we have $u^{\ve}\to u$ in $C([0,T];L^{1}(\R_{x}^{d}))$ which allows to pass to the limit in \eqref{eq:eps-bound} as above. 
\end{proof}
For $p\in[1,\infty)$, $s\in(0,1)$ we recall
\[
\|f\|_{\dot{\mcN}^{s,p}}^{p}:=\sup_{\d>0}\sup_{0<|z|<\d}\int_{\R_{x}^{d}}\left|\frac{|f(x+z)-f(x)|}{|z|^{s}}\right|^{p}\,dx
\]
and
\[
\|f\|{}_{\mcN^{s,p}}^{p}=\|f\|_{L^{p}}^{p}+\|f\|_{\dot{\mcN}^{s,p}}^{p}.
\]

\begin{thm}
\label{thm:ebm}Let $\gamma>0$, $m\ge2$ and $u_{0}\in(L^{1}\cap L^{1+\gamma})(\R_{x}^{d})$, $S\in(L^{1}\cap L^{1+\gamma})([0,T]\times\R_{x}^{d})$. Then
\[
\|u\|_{L^{m+\g}([0,T];\dot{\mcN}^{\frac{2}{m+\g},m+\g}(\R_{x}^{d}))}^{m+\gamma}\le C_{\gamma,m}(\|u_{0}\|_{L_{x}^{1+\gamma}}^{1+\gamma}+\|S\|_{L_{t,x}^{1+\gamma}}^{1+\gamma}).
\]
If, in addition, $u_{0}\in L^{m+\gamma}(\R_{x}^{d})$, $S\in L^{m+\gamma}([0,T]\times\R_{x}^{d})$ then $u\in L^{m+\g}([0,T];\mcN^{\frac{2}{m+\g},m+\g}(\R_{x}^{d}))$ with 
\begin{equation}
\|u\|{}_{L^{m+\g}([0,T];\mcN^{\frac{2}{m+\g},m+\g}(\R_{x}^{d}))}^{m+\g}\le C_{\gamma,m}(\|u_{0}\|_{L_{x}^{1+\gamma}}^{1+\gamma}+\|S\|_{L_{t,x}^{1+\gamma}}^{1+\gamma}+\|u_{0}\|_{L_{x}^{m+\gamma}}^{m+\gamma}+\|S\|_{L_{t,x}^{m+\gamma}}^{m+\gamma}).\label{eq:eb}
\end{equation}
\end{thm}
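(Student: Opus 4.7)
\textbf{Proof plan for Theorem \ref{thm:ebm}.} The starting point is the a priori bound of Lemma \ref{lem:ebm}, which gives, for any $\gamma > 0$,
\[
\int_{0}^{T}\int_{\R^{d}}|\nabla u^{[\frac{m+\gamma}{2}]}|^{2}\,dxdt \;\le\; C_{\gamma,m}\bigl(\|u_{0}\|_{L^{1+\gamma}_{x}}^{1+\gamma}+\|S\|_{L^{1+\gamma}_{t,x}}^{1+\gamma}\bigr).
\]
In other words, setting $w:=u^{[\frac{m+\gamma}{2}]}\in L^{2}([0,T];\dot H^{1}(\R^{d}))$, we have a linear-type energy estimate for $w$. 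The idea is to transfer this $H^{1}$ bound on $w$ into a Nikolskii bound on $u=w^{[\frac{2}{m+\gamma}]}$, exploiting the fact that $\frac{m+\gamma}{2}\ge 1$ (since $m\ge 2$, $\gamma>0$).

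The key elementary ingredient is the real-variable inequality
\begin{equation}\label{eq:key-pointwise}
|a-b|^{p}\;\le\;C_{p}\,|a^{[p]}-b^{[p]}|\qquad\text{for all }a,b\in\R,\ p\ge 1.
\end{equation}
I would prove \eqref{eq:key-pointwise} by cases. When $a,b$ have the same sign, we reduce to $0\le b\le a$ and write $a^{p}-b^{p}\ge (a-b)^{p}$, which follows from the elementary monotonicity observation that $(1+t)^{p}\ge 1+t^{p}$ for $t\ge 0$, $p\ge 1$. When $a$ and $b$ have opposite signs, say $a\ge 0\ge b$, then $|a^{[p]}-b^{[p]}|=a^{p}+|b|^{p}$ and $|a-b|=a+|b|$, and convexity of $x\mapsto x^{p}$ gives $(a+|b|)^{p}\le 2^{p-1}(a^{p}+|b|^{p})$, which is exactly \eqref{eq:key-pointwise}. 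With $p=\frac{m+\gamma}{2}\ge 1$, applying \eqref{eq:key-pointwise} pointwise to $a=u(t,x+z)$, $b=u(t,x)$ and squaring, one obtains
\[
|u(t,x+z)-u(t,x)|^{m+\gamma}\;\le\;C_{m,\gamma}\,\bigl|w(t,x+z)-w(t,x)\bigr|^{2}.
\]

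Now I would integrate in $x$ and use the standard $H^{1}$-to-$L^{2}$ increment bound $\int_{\R^{d}}|w(x+z)-w(x)|^{2}\,dx\le |z|^{2}\|\nabla w\|_{L^{2}}^{2}$, which follows from the Fourier representation or from a direct one-dimensional computation along the segment $[x,x+z]$. This yields, for every $z\in\R^{d}\setminus\{0\}$,
\[
\frac{1}{|z|^{2}}\int_{\R^{d}}|u(t,x+z)-u(t,x)|^{m+\gamma}\,dx\;\le\;C_{m,\gamma}\,\|\nabla w(t)\|_{L^{2}}^{2}.
\]
Taking the supremum in $z$ gives the pointwise-in-time control $|u(t)|_{\mcN^{\frac{2}{m+\gamma},\,m+\gamma}}^{m+\gamma}\le C\|\nabla w(t)\|_{L^{2}}^{2}$ (note $\frac{2}{m+\gamma}\in(0,1)$ since $m+\gamma>2$). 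Integrating in $t\in[0,T]$ and invoking Lemma \ref{lem:ebm} yields the Nikolskii seminorm estimate.

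Finally, for the full norm bound \eqref{eq:eb}, the missing ingredient is $\|u\|_{L^{m+\gamma}([0,T]\times\R^{d})}^{m+\gamma}$. This follows from Lemma \ref{lem:ebm} applied with $\gamma$ replaced by $m+\gamma-1>0$, which yields $\sup_{t\in[0,T]}\|u(t)\|_{L^{m+\gamma}}^{m+\gamma}\le C(\|u_{0}\|_{L^{m+\gamma}}^{m+\gamma}+\|S\|_{L^{m+\gamma}}^{m+\gamma})$; multiplying by $T$ and adding to the seminorm estimate gives \eqref{eq:eb}. No step is genuinely difficult; the only point requiring care is the pointwise inequality \eqref{eq:key-pointwise}, where the sign cases must be checked separately because the naive Hölder-type bound $|a^{[p]}-b^{[p]}|\lesssim|a-b|^{p}$ is false for $p<1$ across sign changes, and we use the correct direction of the inequality here.
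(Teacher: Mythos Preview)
Your proof is correct and follows essentially the same route as the paper: the paper cites the elementary inequality $|r-s|^{m}\le c\,|r^{[m/2]}-s^{[m/2]}|^{2}$ for $m\ge 2$ (which is your \eqref{eq:key-pointwise} with $p=m/2$ after squaring), applies it with $m$ replaced by $m+\gamma$, combines with the $H^{1}$ increment bound and Lemma \ref{lem:ebm}, and then obtains the $L^{m+\gamma}$ part of the norm by re-applying Lemma \ref{lem:ebm} with $\gamma$ replaced by $m-1+\gamma$. The only difference is that you supply a self-contained proof of the pointwise inequality rather than citing it.
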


\begin{proof}
We again restrict to giving the informal derivation, the rigorous justification is standard by considering a vanishing viscosity approximation first, then using lower semicontinuity. From \cite[Lemma 4.1]{E05} we recall the elementary inequality, for $m\ge2$, 
\[
|r-s|^{m}\le c|r^{[\frac{m}{2}]}-s^{[\frac{m}{2}]}|^{2}\quad\forall r,s\in\R,
\]
for some $c>0$. Hence,
\begin{align*}
|\D_{e}^{h}u(x)|^{m} & =|u(x+he)-u(x)|^{m}\le c|u(x+he)^{[\frac{m}{2}]}-u(x)^{[\frac{m}{2}]}|^{2}\\
 & =c|\D_{e}^{h}u^{[\frac{m}{2}]}(x)|^{2}
\end{align*}
and thus, using Lemma \ref{lem:ebm},
\begin{align*}
 & \int_{0}^{T}\sup_{h>0}\sup_{e\in\R^{d},|e|=1}\int_{\R^{d}}\left|\frac{\D_{e}^{h}u(t,x)}{h^{\frac{2}{m+\g}}}\right|^{m+\g}dxdt\\
 & =\int_{0}^{T}\sup_{h>0}\sup_{e\in\R^{d},|e|=1}\int_{\R^{d}}h^{-2}\left|\D_{e}^{h}u(t,x)\right|^{m+\g}dxdt\\
 & \le c\int_{0}^{T}\sup_{h>0}\sup_{e\in\R^{d},|e|=1}\int_{\R^{d}}h^{-2}|\D_{e}^{h}u^{[\frac{m+\g}{2}]}(t,x)|^{2}dxdt\\
 & \le c\int_{0}^{T}\int_{\R^{d}}|\nabla u^{[\frac{m+\g}{2}]}(t,x)|^{2}dxdt\\
 & \le C_{\gamma,m}(\|u_{0}\|_{L_{x}^{1+\gamma}}^{1+\gamma}+\|S\|_{L_{t,x}^{1+\gamma}}^{1+\gamma}).
\end{align*}
This implies
\begin{align*}
\int_{0}^{T}\|u(t,\cdot)\|_{\dot{\mcN}^{\frac{2}{m+\g},m+\g}(\R_{x}^{d})}^{m+\g}\,dt & \le C_{\gamma,m}(\|u_{0}\|_{L_{x}^{1+\gamma}}^{1+\gamma}+\|S\|_{L_{t,x}^{1+\gamma}}^{1+\gamma}).
\end{align*}

Using Lemma \ref{lem:ebm} with $\g$ replaced by $m-1+\gamma$ yields
\[
\|u\|_{L^{\infty}([0,T];L^{m+\g}(\R_{x}^{d}))}^{m+\g}\le C_{m,\g}(\|u_{0}\|_{L_{x}^{m+\g}}^{m+\g}+\|S\|_{L_{t,x}^{m+\gamma}}^{m+\gamma}).
\]

This implies that
\[
\|u\|_{L^{m+\g}([0,T];\mcN^{\frac{2}{m+\g},m+\g}(\R^{d}))}^{m+\g}\le C_{\gamma}(\|u_{0}\|_{L_{x}^{1+\gamma}}^{1+\gamma}+\|S\|_{L_{t,x}^{1+\gamma}}^{1+\gamma})+C_{m,\g}(\|u_{0}\|_{L_{x}^{m+\g}}^{m+\g}+\|S\|_{L_{t,x}^{m+\gamma}}^{m+\gamma}).
\]
\end{proof}

\section{\label{app:optimality_scaling}Optimality and scaling}

In this section we present scaling arguments that indicate the optimal regularity of solutions of porous medium equations. We then show that these estimates are indeed sharp since they are attained by the Barenblatt solution. Consider 
\begin{align}
\partial_{t}u & =\D(|u|^{m-1}u)\quad\text{on }(0,T)\times\R_{x}^{d}\label{eq:elliptic-1}\\
u(0) & =u_{0}\quad\text{on }\R_{x}^{d},\nonumber 
\end{align}
with $u_{0}\in L^{1}(\R_{x}^{d})$, $m>1$. 
\begin{lem}
Assume that for some $s\ge0$, $p\ge1$, $C\ge0$ we have
\begin{equation}
\|u\|_{L^{p}([0,T];\dot{W}^{s,p}(\R_{x}^{d}))}^{p}\le C\|u_{0}\|_{L^{1}(\R_{x}^{d})},\label{eq:eb-2-1}
\end{equation}
for all solutions $u$ to \eqref{eq:elliptic-1}. Then, necessarily $p\le m$ and $s\le\frac{p-1}{p}\frac{2}{m-1}\le\frac{2}{m}.$ 
\end{lem}

\begin{proof}
Given a solution $u$ to \eqref{eq:elliptic-1}, for every $\eta>0$, also $\td u(t,x):=u(\eta t,x)\eta^{\frac{1}{m-1}}$ is a solution to \eqref{eq:elliptic-1}. Since $\|\td u\|_{L^{p}([0,T];\dot{W}^{s,p}(\R_{x}^{d}))}^{p}=\eta^{\frac{p}{m-1}-1}\|u\|_{L^{p}([0,\eta T];\dot{W}^{s,p}(\R_{x}^{d}))}^{p}$ and $\|\td u(0)\|_{L^{1}(\R_{x}^{d})}=\eta^{\frac{1}{m-1}}\|u_{0}\|_{L^{1}(\R_{x}^{d})}$ from \eqref{eq:eb-2-1} we obtain that
\[
\|u\|_{L^{p}([0,T];\dot{W}^{s,p}(\R_{x}^{d}))}^{p}\le C\eta^{1-\frac{p-1}{m-1}}\|u_{0}\|_{L^{1}(\R_{x}^{d})}.
\]
This leads to a contradiction (letting $\eta\uparrow\infty$), unless 
\begin{equation}
p\le m.\label{eq:nec1}
\end{equation}

Similarly, we may rescale in space: Given a solution $u$ to \eqref{eq:elliptic-1}, for every $\eta>0$, also $\td u(t,x):=u(t,\eta x)\eta^{-\frac{2}{m-1}}$ is a solution to \eqref{eq:elliptic-1}. Note that $\|\td u(0)\|_{L^{1}(\R_{x}^{d})}=\eta^{-\frac{2}{m-1}-d}\|u_{0}\|_{L^{1}(\R_{x}^{d})}$ and $\|\td u\|_{L^{p}([0,T];\dot{W}^{s,p}(\R_{x}^{d}))}^{p}=\eta^{-\frac{2}{m-1}p+ps-d}\|u\|_{L^{p}([0,T];\dot{W}^{s,p}(\R_{x}^{d}))}^{p}$ . Hence, by \eqref{eq:eb-2-1},
\[
\|u\|_{L^{p}([0,T];\dot{W}^{s,p}(\R_{x}^{d}))}^{p}\le C\eta^{\frac{2}{m-1}(p-1)-ps}\|u_{0}\|_{L^{1}(\R_{x}^{d})},
\]
which leads to a contradiction unless $s\le\frac{p-1}{p}\frac{2}{m-1}.$ Maximizing the right hand side under \eqref{eq:nec1} yields $p=m$ and $s\le\frac{2}{m}.$
\end{proof}
\begin{example}
Consider the Barenblatt solution 
\begin{align*}
u(t,x) & =t^{-\a}(C-k|xt^{-\b}|^{2})_{+}^{\frac{1}{m-1}},
\end{align*}
where $m>1$, $\a=\frac{d}{d(m-1)+2}$, $k=\frac{\a(m-1)}{2md}$ , $\b=\frac{\a}{d}$ and $C>0$ is a free constant. Then 
\[
u\in L^{m}([0,T];\dot{W}^{s,m}(\R_{x}^{d}))
\]
implies $s<\frac{2}{m}.$
\end{example}

\begin{proof}
With $F(x)=(C-k|x|^{2})_{+}^{\frac{1}{m-1}}$ we have $u(t,x)=t^{-\a}F(xt^{-\b}).$ We next observe that, for $s\in(0,1)$,
\begin{align*}
\|u(t,\cdot)\|_{\dot{W}^{s,m}(\R_{x}^{d})}^{m} & =\int_{\R_{x}^{d}}\int_{\R_{y}^{d}}\frac{|u(t,x)-u(t,y)|^{m}}{|x-y|^{sm+d}}dxdy\\
 & =t^{-\a m-\b(sm+d)+2d\b}\|F\|_{\dot{W}^{s,m}(\R_{x}^{d})}^{m}.
\end{align*}
Hence,
\begin{align*}
\|u\|_{L^{m}([0,T];\dot{W}^{s,m}(\R_{x}^{d}))}^{m} & =\|t^{-\a m-\b(sm+d)+2d\b}\|_{L^{1}([0,T])}\|F\|_{\dot{W}^{s,m}(\R_{x}^{d})}^{m}.
\end{align*}
which is finite if and only if
\[
-\a m-\b(sm+d)+2d\b>-1\quad\text{and}\quad F\in\dot{W}^{s,m}(\R_{x}^{d}).
\]
Hence, necessarily
\[
-m-\frac{1}{d}(sm+d)+2>-\frac{1}{\a}=-\left(\frac{d(m-1)+2}{d}\right),
\]
which is equivalent to $2>ms.$ In the case $s\in(1,2)$ we observe that $\partial_{x_{i}}u(t,x)=t^{-(\a+\b)}F_{x_{i}}(xt^{-\b})$ so that analogous arguments may be applied.
\end{proof}
\bibliographystyle{abbrv}
\bibliography{refs}

\end{document}